\documentclass[12pt, a4paper]{article}

\usepackage[light]{anttor}
\usepackage[T1]{fontenc}
\usepackage[utf8]{inputenc}
\usepackage[top=2.5cm, bottom=2.5cm, left=2cm, right=2cm]{geometry}
\usepackage{microtype}
\usepackage{enumitem}
\usepackage{amsmath}
\usepackage{amssymb}
\usepackage{amsthm}
\usepackage{graphicx}
\usepackage{hyperref}
\usepackage{float}
\usepackage[dvipsnames]{xcolor}
\usepackage{tikz}
\usetikzlibrary{arrows.meta, positioning, fit, backgrounds}
\usetikzlibrary{decorations.pathreplacing}
\usepackage{graphics}
\usepackage[capitalize,noabbrev]{cleveref}
\usepackage{algorithm}
\usepackage{algpseudocode}
\usepackage{aliascnt}

\usepackage[backend=bibtex,sorting=anyt]{biblatex}
\renewbibmacro{in:}{}
\bibliography{References}

\newtheorem{theorem}{Theorem}[section]
\newtheorem*{Theorem}{Theorem}

\newaliascnt{corollary}{theorem}
\newtheorem{corollary}[corollary]{Corollary}
\aliascntresetthe{corollary}
\newtheorem*{Corollary}{Corollary}

\newaliascnt{proposition}{theorem}
\newtheorem{proposition}[proposition]{Proposition}
\aliascntresetthe{proposition}

\newaliascnt{claim}{theorem}

\aliascntresetthe{claim}

\theoremstyle{definition}
\newaliascnt{definition}{theorem}
\newtheorem{definition}[definition]{Definition}
\aliascntresetthe{definition}

\theoremstyle{remark}
\newaliascnt{remark}{theorem}
\newtheorem{remark}[remark]{Remark}
\aliascntresetthe{remark}

\theoremstyle{plain}
\newaliascnt{lemma}{theorem}
\newtheorem{lemma}[lemma]{Lemma}
\aliascntresetthe{lemma}

\theoremstyle{definition}
\newaliascnt{example}{theorem}
\newtheorem{example}[example]{Example}
\aliascntresetthe{example}

\Crefname{theorem}{Theorem}{Theorems}

\Crefname{corollary}{Corollary}{Corollaries}

\Crefname{proposition}{Proposition}{Propositions}

\Crefname{claim}{Claim}{Claims}

\Crefname{definition}{Definition}{Definitions}

\Crefname{remark}{Remark}{Remarks}

\Crefname{lemma}{Lemma}{Lemmas}

\Crefname{example}{Example}{Examples}

\newcommand{\od}{\stackrel{\mbox {\tiny {def}}}{=}}

\title{LandscapeSHAP: Which Persistent Homology Class Gets the Credit?}
\author{Nikola Mili\'cevi\'c}
\date{\today}

\begin{document}
	\maketitle
	
	\begin{abstract}
		Shapley values, a solution concept from cooperative game theory, have recently become a standard tool for feature credit allocation in machine learning. They provide an axiomatically justified method to fairly distribute a model's prediction among the features of data. To the best of our knowledge, they have not yet been applied to explain machine learning models trained on features from topological data analysis. We develop what we believe is the first such approach, focusing on the persistence landscape featurization of persistence diagrams. Because each landscape coordinate is a rank statistic (the $k$-th largest tent-function value across all diagram points at a given filtration threshold), crediting a model's prediction back to individual persistent homology classes (persistence diagram points) is nontrivial. In particular, the naive coalition sampling estimators do not respect the rank based structure as near duplicate topological features would receive a winner-take-all credit instead of sharing. 
		
		We introduce LandscapeSHAP, a method for fair credit allocation to persistence diagram points based on a model's prediction. Our method represents each landscape coordinate as an integral over a parametrized family of cooperative games and solving each with a pivotal player type argument. For linear models on persistence landscapes, LandscapeSHAP has a closed form expression that gives the exact Shapley value of every persistence diagram point. In particular, there is no coalition sampling required. We further prove that the four Shapley ``fairness" axioms (Efficiency, Symmetry, Linearity and Null Player) uniquely characterize this credit allocation for \emph{any} model, not only linear ones. For a general nonlinear model, this unique value can only be calculated exactly from its defining coalition averaging formula, which requires considering all $2^N$ many coalitions, where $N$ is the number of points in the persistence diagram. This is computationally intractable for persistence diagrams of realistic size. For now, we complement the exact linear model result with an efficient Monte Carlo sampling of persistence diagram coalitions. We give convergence rates in terms of number of samples needed to approximate to a desired degree of accuracy. We also prove stability results for the LandscapeSHAP credit allocation, for any model. Finally, we showcase the usefulness and explainability LandscapeSHAP brings to topological data analysis with a regression and classification task. 
	
	\end{abstract}
	
	\section{Introduction}
	
	Persistent homology is a central tool of topological data analysis (TDA). It is a tool that captures how the homology of a filtered space changes with respect to the filtration parameter. In applications this information is encoded in a persistence diagram. Persistence diagrams are a multiset of points $\{(b_i,d_i)\}_{i=1}^N$ in the halfspace $\mathbb{R}^2_{y\ge x}=\{(x,y)\,|\, y\ge x\}$, where the first coordinate represents the  birth of a homology class and the second coordinate represents the time of death. Persistence diagrams have several metrics associated with them which allows for clustering analysis and hypothesis testing with data \cite{Fasy2014}. Unfortunately persistence diagrams are not vectors (or elements of a Hilbert space) \cite{Mileyko2011} so any more serious tools from statistical analysis such as averages, expectations, correlations, and machine learning algorithms are not readily available. Additionally, there are no coarse (let alone isometric) embeddings of persistence diagrams into a Hilbert space \cite{Bubenik2020wagner}.
	
	Persistence landscapes are a featurization of persistence diagrams, based on the underlying rank function, introduced by Bubenik \cite{bubenik2015statistical}. They are particularly nice because they live in a separable Banach space and they satisfy a Strong Law of Large Numbers and a Central Limit Theorem \cite[Theorems 9 and 10]{bubenik2015statistical}. There are many other featurizations besides landscapes, e.g., persistence images \cite{JMLR:v18:16-337}, betti curves \cite{Umeda2017}, tropical coordinates \cite{Kalinik2018}. The featurization of persistence diagrams allowed for a plethora of applications where TDA can now be combined with machine learning algorithms, such as classification and regression analysis on real world datasets \cite{DONUT}.
	
	Despite all of that, a large problem with machine learning methods is their inherent high dimensionality of learned parameters which obscures insight into how a model is making its decision \cite{Rudin2019,Lipton2018}. This is important for interpretability and hypothesis generation and testing. Persistent homology features are often high dimensional themselves. For example a persistence landscape with $20$ layers and $100$ grid point discretization is a vector in a $2000$-dimensional space. Therefore it is very hard to interpret how a trained model is making its predictions. Even if one understood how every coordinate of a persistence landscape affects a model's prediction, pulling back this information to the persistent homology classes themselves, i.e., the points in the persistence diagram, in some principled way is for the most part unexplored territory. 
	
	The Shapley value, a solution concept from cooperative game theory, is a method for fairly distributing the worth of a game among a group of players who have collaborated \cite{Shapley1951}. Thinking of a model as a cooperative game, where the features of the data are the players, it is possible to use Shapley values for fair credit allocation to features in machine learning \cite{NIPS2017_8a20a862} (\Cref{fig:tda_pipeline}). 	
\begin{figure}[H]
	\centering
	\begin{tikzpicture}[
    	node distance = 1.6cm,
    	box/.style = {
        	rectangle, rounded corners, draw=black, thick,
        	minimum width=2.4cm, minimum height=1.4cm,
        	align=center, font=\small
    	},
    	arrow/.style = {-{Latex[length=2.5mm]}, thick},
    	dashedarrow/.style = {-{Latex[length=2.5mm]}, thick, dashed, red!70!black},
    	xaibrace/.style = {decorate, decoration={brace, amplitude=6pt, mirror}, thick, blue!60!black}
	]

	\node[box] (data) {Data};
	\node[box, right=of data] (ph) {Persistent Homology\\(Persistence Diagram)};
	\node[box, right=of ph] (feat) {Featurization\\(e.g.\ Persistence\\ Landscape)};
	\node[box, right=of feat] (ml) {Machine\\Learning\\Model};

	\draw[arrow] (data) -- (ph);
	\draw[arrow] (ph) -- (feat);
	\draw[arrow] (feat) -- (ml);
	
	\draw[dashedarrow]
    	(ml.north) .. controls +(0,1.6) and +(0,1.6) .. (ph.north)
    	node[midway, above, font=\small\itshape, red!70!black, align=center]
    	{Credit allocation to\\persistence diagram points\\ (\textbf{not explored})};

	\draw[xaibrace] ([yshift=-1.0cm]feat.south west) -- ([yshift=-1.0cm]ml.south east)
    	node[midway, yshift=-0.55cm, font=\small\bfseries, blue!60!black, align=center]
    	{Explainable AI\\(Shapley values)};

	\end{tikzpicture}
	\caption{The TDA to machine learning pipeline. Explainable AI methods based on Shapley
	values are well established for attributing model predictions back to
	featurization coordinates (blue brace), but pulling this credit back
	further to the individual points of the persistence diagram (red dashed
	arrow) has not been  addressed.}
	\label{fig:tda_pipeline}
\end{figure}	

In the last decade the machine learning community has embraced this approach and now Shapley values, or some extension based on them, are the most popular method in Explainable AI \cite{Li2024}. However, this methodology has not yet been integrated with the TDA pipeline (\Cref{fig:tda_pipeline}). We present the first such approach by introducing LandscapeSHAP, a method that fairly allocates credit to points in a persistence diagram by viewing persistence landscapes as a collection of order-statistic cooperative games. 

	\subsection*{Our contributions}
	
	The main contribution of this manuscript is the realization that the persistence diagram points can be viewed as players in a collection of cooperative game given by the persistence landscape. In particular, let $\mathcal{D}=\{(b_i,d_i)\}_{i=1}^N$ be a persistence diagram, and let $S\subset \{1,\dots, N\}$ determine a coalition of persistence diagram points from $\mathcal{D}$. Let $\lambda$ be a persistence landscape obtained from $\mathcal{D}$. Let $K$ be the number of layers of the landscape function we are using. Then the landscape is a function 
	\[\lambda: \{1,\dots, K\}\times \mathbb{R}\to \mathbb{R}.\] 
	 Let $\ell_i(m):\mathbb{R}\to \mathbb{R}$ be the tent function associated with persistence diagram point $(b_i,d_i)$ (which are used for constructing the landscape $\lambda$). In particular
	 \[\ell_i(m)\od\min(\max(0,m-b_i),\max(0,d_i-m)).\]
	 and the landscape value $\lambda(k,m)$ is the $k$-th largest value in the set $\{\ell_i(m)\}_{i=1}^N$. The novelty of our work is the observation that we have the cooperative game $v_{(k,m)}$, for each $k\in \{1,\dots ,K\}$ and $m\in \mathbb{R}$, which is defined for a coalition $S\subset \{1,\dots, N\}$ of persistence diagram points by 
	
	\[v_{(k,m)}(S)\od\begin{cases}
		\text{the k-th largest value among }\{\ell_i(m)\,|\, i\in S\}, & |S|\ge k\\
		0, & |S|< k
	\end{cases}.\]
	
	In other words, $v_{(k,m)}(S)$ is the value of the $k$-th landscape at the point $m$, when using only the points in $S$ to construct the persistence landscape (padding with $0$s wherever necessary). To compute the Shapley value, of each player $i\in \{1,\dots ,N\}$ in this game, requires considering the marginal contribution of $i$ for the value of this game to every possible coalition $S$ (\Cref{def:shap_value}). Given $N$ points in a persistence diagram, from the definition, this requires sampling from $2^{N-1}$ many coalitions, which is not feasible unless $N$ is small (e.g., $N\le 15$). However, realistic sizes of persistence diagrams are often in the hundreds of points $(N\ge 100)$. Our first major result, is that no sampling is required to find the exact Shapley values for each player of the game $v_{(k,m)}$ and instead a closed form solution exists.
	
	\begin{Theorem}[\Cref{theorem:closed_form_landscapes}]
			The Shapley value of the player $i$ in the game $v_{(k,m)}$ is 
			\[\varphi_i(v_{(k,m)})=\sum_{j=\max (r_i,k)}^N\dfrac{t_{(j,m)}-t_{(j+1,m)}}{j}.\]
	\end{Theorem}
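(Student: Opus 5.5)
The plan is to decompose $v_{(k,m)}$ into a nonnegative combination of simple threshold (``$k$-out-of-$j$'') games and then apply the Shapley axioms to each piece. Fix $m$ and write $t_{(1,m)}\ge t_{(2,m)}\ge\dots\ge t_{(N,m)}\ge 0$ for the tent values $\{\ell_i(m)\}_{i=1}^N$ sorted in decreasing order, with $t_{(N+1,m)}\od 0$. Let $r_i$ be the rank of player $i$ in this order, with ties broken by a fixed rule. For $1\le j\le N$ let $T_j=\{i\,|\, r_i\le j\}$ be the $j$ top-ranked players. Define the unanimity-type game
\[u_j(S)\od\begin{cases}1,& |S\cap T_j|\ge k\\ 0,& \text{otherwise.}\end{cases}\]

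The first and main step is the identity
\[v_{(k,m)}(S)=\sum_{j=1}^N\bigl(t_{(j,m)}-t_{(j+1,m)}\bigr)\,u_j(S)\quad\text{for every } S\subset\{1,\dots,N\}.\]
I will derive it from the layer-cake formula. Since all tent values are nonnegative and we pad with zeros, the $k$-th largest value of a multiset $A\subset[0,\infty)$ equals $\int_0^\infty \mathbf 1\bigl[\#\{a\in A\,|\,a\ge s\}\ge k\bigr]\,ds$. Now split $(0,\infty)$ into the intervals $(t_{(j+1,m)},t_{(j,m)}]$; above $t_{(1,m)}$ the integrand vanishes. For $s$ in the $j$-th interval, the set $\{i\,|\,\ell_i(m)\ge s\}$ is exactly $T_j$. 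Hence the integrand there equals $u_j(S)$, and the interval has length $t_{(j,m)}-t_{(j+1,m)}$.

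Ties are where I expect care to be needed. If $t_{(j,m)}=t_{(j+1,m)}$, the interval is empty and its coefficient is zero. So the arbitrary tie-breaking in $T_j$ never affects the sum, and the final formula does not depend on it. That is consistent with symmetry, since tied players get the same sum.

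By Linearity, $\varphi_i(v_{(k,m)})=\sum_j (t_{(j,m)}-t_{(j+1,m)})\varphi_i(u_j)$, so it remains to compute $\varphi_i(u_j)$:
\begin{itemize}
\item If $j<k$, then $u_j\equiv 0$, so every player gets $0$.
\item If $j\ge k$, every player outside $T_j$ is a Null Player, so receives $0$.
\item If $j\ge k$, the players in $T_j$ are pairwise symmetric, since swapping two of them does not change $|S\cap T_j|$. By Efficiency they split $u_j(\{1,\dots,N\})=1$ equally, so each receives $1/j$.
\end{itemize}
If one prefers not to invoke the axioms, the pivotal-player computation gives the same answer. Player $i\in T_j$ is pivotal exactly when it arrives in a random order after precisely $k-1$ other members of $T_j$. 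By exchangeability of its position among the $j$ members, this has probability $1/j$.

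Finally, $i\in T_j$ iff $j\ge r_i$. So the nonzero terms are exactly those with $j\ge\max(r_i,k)$, which gives the stated formula.
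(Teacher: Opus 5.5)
Your proof is correct and follows essentially the same route as the paper. Both write the $k$-th order statistic via a layer-cake decomposition into threshold games, solve each threshold game by the null-player/symmetry (pivotal) argument that gives $1/j$ to each of the $j$ players above the threshold, and conclude by Linearity. Your finite sum $\sum_j (t_{(j,m)}-t_{(j+1,m)})\,u_j$ is just the paper's integral $\int_0^\infty w_\tau\,d\tau$ with the piecewise-constant $w_\tau$ collapsed onto the intervals between consecutive sorted values. This has two small benefits: Linearity is applied only to a finite sum, and you show explicitly that the tie-breaking used to define $r_i$ does not affect the result.
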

	
	Here $t_{(1,m)}\ge \cdots \ge t_{(N,m)}$ (with $t_{(N+1,m)}=0$) is the descending ordering of the tent values $\{\ell_i(m)\}_{i=1}^N$ at the point $m$ and $r_i$ is the rank (index) of the player $i$ in this order. Furthermore, by Shapley uniqueness theorem (\Cref{theorem:shap_unique}), this closed form is the \emph{only way} to fairly allocate credit to each player, in the cooperative game $v_{(k,m)}$.
	
	\begin{Corollary}[\Cref{corollary:uniqueness_landscape_games}]
		For each ordered pair $(k,m)$, $\varphi(v_{(k,m)})$ is the only credit allocation rule satisfying Efficiency, Symmetry, Linearity and Null player axioms for the game $v_{(k,m)}$.
	\end{Corollary}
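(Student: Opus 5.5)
The Corollary follows almost directly from Shapley's uniqueness theorem (\Cref{theorem:shap_unique}) once we check that $v_{(k,m)}$ is a legitimate cooperative game on the player set $\{1,\dots,N\}$. We then identify the unique value it assigns with the closed form of \Cref{theorem:closed_form_landscapes}.

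\textbf{Step 1: $v_{(k,m)}$ is a TU game.} Fix $(k,m)$. We need $v_{(k,m)}$ to be a real-valued set function on $2^{\{1,\dots,N\}}$ with $v_{(k,m)}(\emptyset)=0$. Well-definedness holds because the $k$-th largest value of the finite multiset $\{\ell_i(m)\mid i\in S\}$ is well-defined even with ties: sort in descending order and take the $k$-th entry. For $S=\emptyset$ we have $|S|=0<k$, so the value is $0$, since $k\ge 1$. The game therefore lies in the finite-dimensional vector space $\mathcal{G}_N$ of games on $N$ players to which \Cref{theorem:shap_unique} applies.

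\textbf{Step 2: apply uniqueness.} \Cref{theorem:shap_unique} says that on $\mathcal{G}_N$ exactly one map $\varphi:\mathcal{G}_N\to\mathbb{R}^N$ satisfies Efficiency, Symmetry, Linearity and Null Player, namely the coalition-averaging Shapley value of \Cref{def:shap_value}. We have to be careful about how ``the only credit allocation rule \ldots\ for the game $v_{(k,m)}$'' is read. Linearity is a property of a rule defined on a family of games, not of a single game. So we read the statement as follows: any rule $\psi$ on $\mathcal{G}_N$ satisfying the four axioms has $\psi(v_{(k,m)})=\varphi(v_{(k,m)})$. This is immediate, because such a $\psi$ equals $\varphi$ on all of $\mathcal{G}_N$.

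\textbf{Step 3: identify the value.} \Cref{theorem:closed_form_landscapes} gives $\varphi_i(v_{(k,m)})=\sum_{j\ge\max(r_i,k)}(t_{(j,m)}-t_{(j+1,m)})/j$. The unique axiomatic allocation is therefore exactly this closed form. As a sanity check we can verify Efficiency directly: summing over $i$, the term indexed by $j$ is counted for the $j$ players of rank at most $j$, provided $j\ge k$. This gives $\sum_{j\ge k}(t_{(j,m)}-t_{(j+1,m)})=t_{(k,m)}=v_{(k,m)}(\{1,\dots,N\})$.

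The only delicate point is the one raised in Step 2, namely phrasing uniqueness correctly for a single game. A second subtlety is tie-breaking in the ranks $r_i$ when tent values coincide. Players with equal $\ell_i(m)$ must receive equal credit by Symmetry, since swapping them leaves $v_{(k,m)}$ invariant. So we should confirm that the closed form is independent of the tie-breaking. When $t_{(j,m)}=t_{(j+1,m)}$, the summands between tied ranks vanish, so the value does not depend on the order chosen among tied players. That resolves the ambiguity.
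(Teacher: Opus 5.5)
Your proposal is correct and follows the paper's own argument: invoke Shapley's uniqueness theorem (\Cref{theorem:shap_unique}) for the game $v_{(k,m)}$, then identify the resulting value with the closed form of \Cref{theorem:closed_form_landscapes}. Your extra checks are all valid and simply make explicit what the paper leaves implicit: that $v_{(k,m)}(\varnothing)=0$, that uniqueness must be read over the whole space of games on $N$ players (Linearity constrains a rule, not a single game), the efficiency computation, and the independence of the closed form from tie-breaking among equal tent values.
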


	A major aspect of persistent homology is the various stability theorems that guarantee if the data does not change by more than $\varepsilon$, the persistence diagrams will not change more than $\varepsilon$ in the bottleneck distance $d_B$ \cite{CohenSteiner2006,Bubenik2024,Chazal2016,Bauer2014}. A natural question to ask is, if the persistence diagram changes by some $\varepsilon$, by how much does the Shapley value of each persistence diagram point change? We show that the Shapley values of the cooperative games $v_{(k,m)}$ are $1$-Lipschitz with respect to the (potential) noise in persistence diagrams.
	
	Assume that persistence diagrams $D$ and $D'$ have an equal number of points $N$ with a fixed optimal correspondence between them that realizes their bottleneck distance $\varepsilon=d_B(D,D')$. Let $\varphi_r(v_{(k,m)}(D))$ and $\varphi_r(v_{(k,m)}(D'))$ be the Shapley values of the game $v_{(k,m)}$ on diagrams $D$ and $D'$ of players at rank $r$, respectively. Then we have the following.
					
	\begin{Theorem}[\Cref{theorem:stability_for_landscape_games}]
		For every rank $r$, we have the following inequality
		\[|\varphi_r(v_{(k,m)}(D))-\varphi_r(v_{(k,m)}(D'))|\le \left(\dfrac{2}{\max(r,k)}-\dfrac{1}{N}\right)\varepsilon\le 2\varepsilon,\]
		where $\varepsilon=d_B(D,D')$. 
	\end{Theorem}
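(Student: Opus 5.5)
Starting from the closed form of \Cref{theorem:closed_form_landscapes}, the Shapley value of the player at rank $r$ depends only on the sorted tent values at $m$:
\[\varphi_r(v_{(k,m)}(D))=\sum_{j=s}^{N}\frac{t_j-t_{j+1}}{j},\qquad s=\max(r,k),\]
with $t_j=t_{(j,m)}$ for $D$, $t'_j$ for $D'$, and $t_{N+1}=t'_{N+1}=0$. I will first turn this telescoping-type sum into a weighted sum of the values themselves. Abel summation gives
\[\varphi_r=\frac{t_s}{s}-\sum_{j=s+1}^{N}\frac{t_j}{j(j-1)}=\frac{t_s}{s}-\sum_{j=s+1}^{N}\Bigl(\frac{1}{j-1}-\frac1j\Bigr)t_j .\]
Hence the difference $\varphi_r(v_{(k,m)}(D))-\varphi_r(v_{(k,m)}(D'))$ is a linear combination of the $\delta_j=t_j-t'_j$ with coefficients $1/s$ for $j=s$ and $-(1/(j-1)-1/j)$ for $j>s$.

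The second step is to control $\sup_j|\delta_j|\le\varepsilon$. Each tent function is stable in its diagram point: if $|b_i-b'_i|\le\varepsilon$ and $|d_i-d'_i|\le\varepsilon$, then $|\ell_i(m)-\ell'_i(m)|\le\varepsilon$ for every $m$, since $\ell_i(m)=\max(0,\min(m-b_i,d_i-m))$ is built from $1$-Lipschitz operations. The fixed optimal correspondence of the hypothesis pairs the points so that this holds for every $i$; I will assume this bijection is the one realising the bottleneck distance. I then need the standard rearrangement fact: if two vectors satisfy $\|x-y\|_\infty\le\varepsilon$, their decreasing rearrangements also satisfy $|x_{(j)}-y_{(j)}|\le\varepsilon$ for every $j$. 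This follows because the $j$-th largest value is a min-max over index sets, a $1$-Lipschitz functional in the sup norm. Together these give $|\delta_j|\le\varepsilon$ for all $j$.

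Finally, the triangle inequality gives
\[|\Delta\varphi_r|\le\varepsilon\Bigl(\frac1s+\sum_{j=s+1}^N\Bigl(\frac1{j-1}-\frac1j\Bigr)\Bigr)=\varepsilon\Bigl(\frac1s+\frac1s-\frac1N\Bigr)=\Bigl(\frac{2}{\max(r,k)}-\frac1N\Bigr)\varepsilon .\]
Since $s\ge1$, this is at most $2\varepsilon$. If $s=N$ the sum is empty, and the bound becomes $\varepsilon/N\le(2/N-1/N)\varepsilon$, so it still holds.

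The main obstacle is the comparison at a fixed rank $r$. The player at rank $r$ in $D$ may be a different diagram point from the player at rank $r$ in $D'$, so it is essential to compare order statistics, not individual players. The statement is indexed by rank precisely for this reason, and the rearrangement lemma is what makes the comparison legitimate. I would also check that ties do not matter: the closed form depends only on the rank $r$, so any tie-breaking convention works.
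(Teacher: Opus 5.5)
Your proposal is correct and follows essentially the same route as the paper. Both arguments first establish tent-function stability under the matched bijection, then $|t_{(j,m)}-t'_{(j,m)}|\le\varepsilon$ for every $j$, and finally apply the triangle inequality to the closed form rewritten as $\sum_j c_j t_{(j,m)}$ with $c_s=1/s$ and $c_j=1/j-1/(j-1)$ for $j>s$, whose absolute values telescope to $2/s-1/N$. Your max-min justification of the order-statistic bound is a compact restatement of the paper's argument via the set of the top $j$ players, and your explicit checks of the case $s=N$ and of tie-breaking are a small addition.
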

		
	Note that nothing so far assumed the existence of a model $f$ that is trained on persistence landscapes. All the results so far were about understanding the landscape games $v_{(k,m)}$ themselves. To make a landscape an input to a model a discretization of the landscape functions is needed. In particular a finite set of  ``grid points" $M\subset \mathbb{R}$ is chosen and we consider the restriction of the landscape function 
	\[\lambda:\{1,\dots, K\}\times M\to \mathbb{R}.\]
	
	Once a model comes into play, i.e. a function $f:\mathbb{R}^{K\times |M|}\to \mathbb{R}$, it gives rise to a new cooperative game. Namely the game associated to the model $f$, is $v_f$ where the value of a coalition of persistence diagram points $S$ is given by
	\[v_f(S)\od f(\Lambda(S)),\] 
	where $\Lambda$ is the transformation of persistence diagram into a persistence landscape with $K$ layers and grid points in $M$. 
	In other words, assume that $f$ is a model that has been trained on a collection of persistence landscapes, with its learned parameters now fixed. Evaluate the model on the new landscape obtained by restricting to the persistence diagram points in the coalition $S$. That is the value of the game $v_f$ for the coalition $S$. The Shapley values of player $i$ of this game, $\varphi_i(v_f)$, are once again defined by averaging the marginal contributions of player $i$ to $S$, over all coalitions (\Cref{def:shap_value}). For general nonlinear models no closed form solution exists as of yet, however it is conceivable one might derive closed form solutions or polynomial in time algorithms for certain families on nonlinear models in the future. This means that in order to compute the Shapley values of persistence diagram points, for non linear models, one has to evaluate $2^{N-1}$ many such coalition, which is not feasible for realistic sizes of persistence diagrams. 
	
	Allocating credit to features for nonlinear models in machine learning is almost never exact, with the exception of tree ensembles \cite{Lundberg2020}. Most estimators of Shapley values use Monte Carlo sampling of permutations in order to approximate the Shapley values. Nevertheless, \Cref{theorem:shap_unique} guarantees that the Shapley values $\varphi_i(f)$ are the only fair way to allocate credit to persistence diagram points for the predictions of the model $f$, regardless of their computability.
	
	\begin{Corollary}[\Cref{corollary:fairness_axioms}]
		For any model $f$, there is exactly one credit allocation rule $\varphi(v_f)$ satisfying Efficiency, Symmetry, Linearity and Null player axioms for the game $v_f$.
	\end{Corollary}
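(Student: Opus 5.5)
The statement is a direct specialization of the Shapley uniqueness theorem (\Cref{theorem:shap_unique}) to the game $v_f$. I would first verify that $v_f$ is a cooperative game on the player set $\{1,\dots,N\}$ in the sense used in the definition of the Shapley value (\Cref{def:shap_value}). That is, I need a real-valued set function on $2^{\{1,\dots,N\}}$. This holds because $\Lambda(S)$ is a well-defined vector in $\mathbb{R}^{K\times|M|}$ for every coalition $S$, including $S=\emptyset$, where all landscape coordinates are padded with $0$. Since $f$ is a fixed real-valued function, $v_f(S)=f(\Lambda(S))$ is well defined. No regularity of $f$ (linearity, continuity) is needed.

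One subtlety is the normalization $v(\emptyset)=0$, if \Cref{theorem:shap_unique} is stated for games vanishing on the empty coalition. Here $v_f(\emptyset)=f(\Lambda(\emptyset))=f(0)$, which is generally nonzero. I would handle this by passing to the centered game $\tilde v_f(S)=v_f(S)-f(0)$. Shapley values depend only on marginal contributions $v(S\cup\{i\})-v(S)$, so they are unchanged by the shift. Efficiency for $v_f$ is then read as distributing $v_f(N)-v_f(\emptyset)=f(\Lambda(\mathcal{D}))-f(0)$. Since the axioms and the rule are the same for $v_f$ and $\tilde v_f$, this causes no loss.

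Next, existence: the Shapley formula of \Cref{def:shap_value} applied to $v_f$ satisfies the four axioms. This is the existence half of \Cref{theorem:shap_unique}. Uniqueness: I would recall the standard argument in case the earlier theorem is stated only abstractly. Any game on $N$ players decomposes uniquely as a linear combination of unanimity games $u_T$, with $T\neq\emptyset$, via Harsanyi dividends. On each $u_T$, Null Player forces zero credit outside $T$, Symmetry forces equal credit inside $T$, and Efficiency then fixes that credit at $1/|T|$. Linearity extends this to $v_f$.

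The only genuine point to check, and the one I would treat as the potential obstacle, is the following. The uniqueness theorem characterizes a rule on the whole space of games, since Linearity is a condition across games. The corollary, however, speaks of "the game $v_f$". So I would phrase the conclusion as follows: any rule defined on all $N$-player games and satisfying the axioms must take the value $\varphi(v_f)$ at $v_f$. This covers in particular the family $\{v_f\}_f$, which is closed under linear combinations of models because $v_{af+bg}=av_f+bv_g$. That makes Linearity meaningful within the model class as well.
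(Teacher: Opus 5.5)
Your proposal is correct and is essentially the paper's argument: the paper obtains \Cref{corollary:fairness_axioms} by directly invoking Shapley's uniqueness theorem (\Cref{theorem:shap_unique}) for the game $v_f$, noting that nothing in it depends on $f$ being linear. Your extra points are sound clarifications of what the paper leaves implicit: centering by $v_f(\varnothing)=f(0)$ (consistent with the paper's remark that a bias cancels in every marginal contribution), and reading uniqueness as a statement about rules on the whole space of $N$-player games, since Linearity is a cross-game condition; one small tweak is that the paper's Linearity axiom is only additivity, so in your unanimity-game sketch apply Null Player, Symmetry and Efficiency directly to each $c_T u_T$ rather than rescaling from $u_T$.
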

	
	However, if $f$ is a linear model (e.g. (regularized) least squares, $1$-dim PCA projection, linear support vector machine), we prove that there is a closed form solution for calculating the Shapley values of the game $v_f$. 
	
	\begin{Theorem}[\Cref{theorem:closed_form_linear_model}]
	Let $f$ be a linear model trained on persistence landscapes, and let $w_{(k,m)}$ be the coefficients of the model for the landscape coordinate $(k,m)$. Then
		\[\varphi_i (v_f)=\sum_{k,m}w_{(k,m)}\varphi_i(v_{(k,m)}).\]
	\end{Theorem}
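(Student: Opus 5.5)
The plan is to decompose the model game $v_f$ as a linear combination of the landscape games $v_{(k,m)}$ and then apply the Linearity axiom of the Shapley value.

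The first step is to make the linear model explicit. Write $f(x)=w_0+\sum_{k,m} w_{(k,m)}x_{(k,m)}$ on $\mathbb{R}^{K\times|M|}$, where $w_0$ is the intercept. By definition of $\Lambda$, the $(k,m)$ coordinate of $\Lambda(S)$ is the $k$-th landscape at $m$ built from the points in $S$, with zero padding when $|S|<k$. This is exactly $v_{(k,m)}(S)$. Hence for every coalition $S$,
\[
v_f(S)=w_0+\sum_{k,m}w_{(k,m)}v_{(k,m)}(S).
\]
Check the empty coalition: every $v_{(k,m)}(\emptyset)=0$, so $v_f(\emptyset)=w_0$.

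The second step deals with the intercept. The Shapley value in \Cref{def:shap_value} depends only on marginal contributions $v(S\cup\{i\})-v(S)$, so a constant added to every coalition value cancels. Equivalently, one may normalize to $v_f-v_f(\emptyset)$, which does not change $\varphi$. If the paper's convention for games requires $v(\emptyset)=0$, I would state explicitly that $v_f$ is taken as $f(\Lambda(S))-f(\Lambda(\emptyset))$, and the identity above then holds without $w_0$.

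The third step applies linearity. The Shapley value is linear in the game, which one can read off directly from the averaging formula in \Cref{def:shap_value}, or take as the Linearity axiom satisfied by $\varphi$ (\Cref{theorem:shap_unique}). This gives $\varphi_i(v_f)=\sum_{k,m}w_{(k,m)}\varphi_i(v_{(k,m)})$. The sum is finite since $M$ is a finite grid, so no interchange-of-limits issue arises. Each $\varphi_i(v_{(k,m)})$ is then given in closed form by \Cref{theorem:closed_form_landscapes}.

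The only point needing care, and the main potential obstacle, is the coordinate identification $\Lambda(S)_{(k,m)}=v_{(k,m)}(S)$. One has to confirm that the zero-padding convention in $\Lambda$ for small coalitions matches the definition of $v_{(k,m)}$. It must also hold when $k>|S|$ or when tent values vanish, since the $k$-th largest of a set containing zeros is still consistent. Once that matching is checked, the rest is immediate.
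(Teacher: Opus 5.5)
Your proposal is correct and follows the paper's argument. You identify $\Lambda(S)_{(k,m)}=v_{(k,m)}(S)$, note that the bias cancels in every marginal contribution, and apply linearity of the Shapley value, where reading homogeneity in the real weights $w_{(k,m)}$ directly off the averaging formula, rather than relying on the additivity axiom alone, is the right justification.
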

	
	In other words, once the Shapley values of the game $v_{(k,m)}$ are known, multiply them by the coefficient of the linear model $f$ at the landscape coordinate $(k,m)$ and then sum over all landscape coordinates to get the Shapley value of the game $v_f$ for the persistence diagram point $i$. For general (nonlinear) models we give an efficient Monte Carlo permutation sampling algorithm that estimates Shapley values of the game $v_f$ (\Cref{alg:nonlinear_model}). We also prove Hoeffding-type inequality estimates on the number of samples required to approximate the Shapley values of the game $v_f$ to a desired level of accuracy. In particular, let $R_i=||\ell_i||_1\cdot L_f$ be the product of the Lipschitz constant of $f$ and the $1$-norm of the $i$-th tent function. Then we have the following.
	 
	  \begin{Theorem}[\Cref{theorem:shapley_values_error_bound}]
	 	Fix a player $i$, and let $\hat{\varphi}_i(v_f)$ be the \Cref{alg:nonlinear_model} estimate after $n$ i.i.d. sampled permutations. For any $\varepsilon>0$, $\delta \in (0,1)$,
		\[n\ge \dfrac{2R_i^2}{\varepsilon^2}\log \dfrac{2}{\delta}\Longrightarrow \text{Pr}(|\hat{\varphi}_i(v_f)-\varphi_i(v_f)|\ge \varepsilon)<\delta,\]
		A guarantee holding simultaneously for all $N$ players, via a union bound over $\dfrac{\delta}{N}$, only inflates this to $n= O(R^2_{\max}\varepsilon^{-2} \log(\dfrac{N}{\delta}))$ with $R_{\max} =\max_iR_i$ which is logarithmic and not polynomial, in $N$.
	 \end{Theorem}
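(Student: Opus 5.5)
We prove the statement with Hoeffding's inequality applied to the permutation-sampling estimator. The one real step is a deterministic bound on each sampled marginal contribution.

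\Cref{alg:nonlinear_model} draws $n$ i.i.d.\ uniform permutations $\pi_1,\dots,\pi_n$ of $\{1,\dots,N\}$. For each $\pi_t$ it records the marginal contribution
\[X_t \od v_f(P_i^{\pi_t}\cup\{i\})-v_f(P_i^{\pi_t}),\]
where $P_i^{\pi}$ is the set of players preceding $i$ in $\pi$. The estimate is $\hat\varphi_i(v_f)=\frac1n\sum_t X_t$. The permutation form of the Shapley value (equivalent to \Cref{def:shap_value}) gives $\mathbb{E}[X_t]=\varphi_i(v_f)$. So the estimator is an unbiased average of i.i.d.\ bounded variables, and Hoeffding applies once a uniform range for $X_t$ is established.

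\textbf{Key step: the bound $|X_t|\le R_i$.} Fix any coalition $S$ with $i\notin S$. Then
\[|v_f(S\cup\{i\})-v_f(S)|\le L_f\,\|\Lambda(S\cup\{i\})-\Lambda(S)\|.\]
We need $\|\Lambda(S\cup\{i\})-\Lambda(S)\|\le \|\ell_i\|_1$. Fix $m$. Inserting the value $a=\ell_i(m)$ into the multiset $\{\ell_j(m)\}_{j\in S}$, padded with zeros, shifts the order statistics as follows. Suppose $a$ lands at position $r$. Layers $k<r$ are unchanged. For layers $k\ge r$, the new $k$-th value is the old $(k-1)$-th value (or $a$ when $k=r$). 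Each layer difference is therefore nonnegative, and the differences telescope:
\[\sum_{k\ge 1}\bigl(\lambda_{S\cup\{i\}}(k,m)-\lambda_S(k,m)\bigr)= a+\sum_{k>r}\lambda_S(k-1,m)-\sum_{k\ge r}\lambda_S(k,m)\le a.\]
Summing over layers $k\le K$ and grid points $m\in M$ then gives
\[\sum_{k,m}|\Delta(k,m)|\le\sum_{m\in M}\ell_i(m).\]
This is the discretized $\|\ell_i\|_1$, which is how I read the norm in the statement. So $|X_t|\le R_i$ whenever $L_f$ is the Lipschitz constant with respect to the $\ell^1$ norm on $\mathbb{R}^{K\times|M|}$.

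I expect this bound to be the main obstacle. One has to fix the norm convention consistently, $\ell^1$ on the feature space paired with the grid $1$-norm of the tent. One also has to verify the order-statistic monotonicity carefully, with ties and with zero padding when $|S|<k$. That verification is the telescoping argument above.

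\textbf{Concluding with Hoeffding.} Each $X_t\in[-R_i,R_i]$ has range $2R_i$, so Hoeffding's inequality gives
\[\Pr\bigl(|\hat\varphi_i-\varphi_i|\ge\varepsilon\bigr)\le 2\exp\!\Bigl(-\frac{2n^2\varepsilon^2}{n(2R_i)^2}\Bigr)=2\exp\!\Bigl(-\frac{n\varepsilon^2}{2R_i^2}\Bigr).\]
This is $\le\delta$ exactly when $n\ge\frac{2R_i^2}{\varepsilon^2}\log\frac2\delta$. The stated strict inequality holds whenever the sample bound is strict or the Hoeffding inequality is strict (e.g.\ nondegenerate $X_t$); otherwise read it as $\le\delta$.

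For the simultaneous guarantee, replace $\delta$ by $\delta/N$ and $R_i$ by $R_{\max}$, and take a union bound over the $N$ players. This requires $n\ge\frac{2R_{\max}^2}{\varepsilon^2}\log\frac{2N}{\delta}=O\bigl(R_{\max}^2\varepsilon^{-2}\log(N/\delta)\bigr)$, as claimed.
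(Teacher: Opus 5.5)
Your proof is correct and is essentially the paper's argument: the telescoping insertion bound (\Cref{lemma:bound_on_insertion_of_player} and \Cref{corollary:bound_on_insertion_of_player}) puts each sampled marginal contribution in $[-R_i,R_i]$, and Hoeffding with range $2R_i$ plus a union bound at level $\delta/N$ then give both claims. Your hedge on the strict inequality $<\delta$ is not needed, because the Markov step inside Hoeffding's bound is always strict for $\varepsilon>0$ (equality would force the centered sum to equal $n\varepsilon$ almost surely, which contradicts its mean being zero).
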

	 
	 Finally, we also provide stability results for the Shapley values of the game $v_f$ for any model $f$. Assume once again that persistence diagrams $D$ and $D'$ have an equal number of points $N$ with a fixed optimal correspondence between them that realizes their bottleneck distance $\delta=d_B(D,D')$. Let $\varphi_i(v_{f};D)$ and $\varphi_i(v_{f};D')$ be the Shapley values of the game $v_f$ on diagrams $D$ and $D'$ of players $i$, respectively.
	 
	\begin{Theorem}[\Cref{theorem:stability_nonlinear_model}]
		Let $f$ be $L_f$-Lipschitz with respect to the $1$-norm on the flattened $K\times |M|$ landscape vector. Then, for every player $i$ we have 
	\[|\varphi_i(v_f;D)-\varphi_i(v_f;D')|\le 2L_fK|M|\delta.\]
	\end{Theorem}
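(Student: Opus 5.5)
Since the Shapley value is an average of marginal contributions, I will prove the bound one marginal contribution at a time. By \Cref{def:shap_value},
\[\varphi_i(v_f;D)=\sum_{S\subseteq [N]\setminus\{i\}}\frac{|S|!\,(N-|S|-1)!}{N!}\bigl(v_f(S\cup\{i\};D)-v_f(S;D)\bigr),\]
and the weights are nonnegative and sum to $1$. The fixed optimal correspondence lets me identify player $i$ of $D$ with player $i$ of $D'$, so the same coalitions $S$ make sense on both diagrams. Subtracting and applying the triangle inequality gives
\[|\varphi_i(v_f;D)-\varphi_i(v_f;D')|\le \max_{S}\Bigl(|v_f(S\cup\{i\};D)-v_f(S\cup\{i\};D')|+|v_f(S;D)-v_f(S;D')|\Bigr).\]
This is where the factor $2$ in the statement comes from. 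It therefore suffices to show that for every coalition $T$ (including the empty one),
\[|v_f(T;D)-v_f(T;D')|\le L_f K|M|\delta.\]

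Since $v_f(T)=f(\Lambda(T))$ and $f$ is $L_f$-Lipschitz in the $1$-norm, I get $|v_f(T;D)-v_f(T;D')|\le L_f\|\Lambda(T;D)-\Lambda(T;D')\|_1$. The $1$-norm is a sum over $K|M|$ coordinates $(k,m)$, so I only need each coordinate to move by at most $\delta$:
\[|v_{(k,m)}(T;D)-v_{(k,m)}(T;D')|\le\delta .\]
I will prove this in two steps.

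\textbf{Step one: tent functions move by at most $\delta$.} The map $(b,d)\mapsto \min(\max(0,m-b),\max(0,d-m))$ is $1$-Lipschitz in the sup norm on $(b,d)$. The matched points satisfy $|b_i-b_i'|\le\delta$ and $|d_i-d_i'|\le\delta$, hence $|\ell_i(m)-\ell_i'(m)|\le\delta$ for every $i\in T$.

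There is one subtlety here. A bottleneck matching may send points to the diagonal. The statement assumes equal cardinality with a fixed correspondence, and I will read this as a bijection between points in which each pair is within $\delta$ in the sup norm. If diagonal matches do occur, I treat a diagonal point as a point with zero tent function; its partner then has persistence at most $2\delta$ and tent height at most $\delta$, so the same bound holds.

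\textbf{Step two: order statistics move by at most $\delta$.} I need the standard fact that the $k$-th largest value of a finite list is $1$-Lipschitz in the sup norm of the list. Padding with zeros when $|T|<k$ is consistent on both sides, so this covers every case. One can show it with a min–max characterization: the $k$-th largest equals $\max_{|A|=k}\min_{a\in A}x_a$, and both $\max$ and $\min$ are $1$-Lipschitz. This gives the coordinate bound above.

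Summing over the $K|M|$ coordinates, multiplying by $L_f$, and doubling for the two terms in the marginal contribution yields $2L_fK|M|\delta$. I expect the only delicate point to be justifying the correspondence and its diagonal matches in Step one. Step two is routine once the min–max characterization of order statistics is stated.
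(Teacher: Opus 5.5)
Your proof is correct and follows the paper's argument essentially step for step. You bound each coordinate by $|v_{(k,m)}(T;D)-v_{(k,m)}(T;D')|\le\delta$ using the $1$-Lipschitz tent map and order statistics, sum to $K|M|\delta$ in the $1$-norm, apply the Lipschitz bound on $f$, and use the triangle inequality on the two terms of each marginal contribution before averaging. The only cosmetic difference is how you justify the $1$-Lipschitz property of the $k$-th largest value: you use the characterization $\max_{|A|=k}\min_{a\in A}x_a$, whereas the paper reuses the counting argument from the proof of \Cref{theorem:stability_for_landscape_games}. Your explicit handling of diagonal matches (tent height at most $\delta$) agrees with the paper's padding convention.
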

	 
	We illustrate the power of our method in explaining exactly how a linear regressor trained to predict Gaussian curvature is making its decisions. We also test our method on a dynamical system classification task, and verify that the model is making reasonable decisions in its predictions based on real (observable) topological features in the data. We use a linear model as a potential prior for estimating how many samples of the Boolean lattice $2^N$ are needed for approximating Shapley values to a high degree of accuracy. This was then used as an estimate for the number of samples needed to approximate Shapley values of a nonlinear model on the same data. We saw that the nonlinear model is making decisions based on a similar set of topological features as the linear model. Therefore, our method is a powerful visualization and explainability toolkit that can be incorporated in standard TDA pipelines and offers additional tests for a model's validity.  We also discuss how one might define cooperative games based on some other common featurizations of persistence diagrams in the literature.
	
	This manuscript is structured as follows. In \Cref{section:prelminaries} we recall the necessary background on Shapley values and persistence homology. In \Cref{section:pers_landscapes_games} we show how to compute the Shapley values of landscape games. In \Cref{section:linear_models} we show how to allocate credit via Shapley values to persistence diagram points based on predictions of linear models trained on persistence landscapes. In \Cref{section:nonlinear_models} we show how to approximate Shapley values for nonlinear models on persistence landscapes. In \Cref{section:experiments} we apply LandscapeSHAP to a regression and classification problems empirically confirming its usefuleness. In \Cref{section:other_featurizations} we discuss how to view persistence images and other additive featurizations as cooperative games. In \Cref{section:discussion} we discuss the implications of this work and possible future directions.

	\subsection*{Related works}
	The present work sits at the intersection of several active research programs, and it is worth orienting the reader with respect to each of them.
	
	\paragraph{Shapley values for explainable AI.}
	Shapley values provide a principled way to explain the predictions of machine learning models. By interpreting a model trained on a set of features as a value function on a coalition of players, Shapley values provide an axiomatically fair way to allocate credit to features for a prediction \cite{NIPS2017_8a20a862}. They can also be used to allocate credit to the uncertainty of a prediction \cite{watson2023predictive}. Furthermore,  Shapley values unify several other attributive methods such as locally interpretable model-agnostic explanations (LIME) \cite{Ribeiro2016}, DeepLIFT \cite{pmlr-v70-shrikumar17a} and layer-wise relevance propagation \cite{Bach2015,Antipov2020}. This unified perspective is a consequence of their uniqueness (\Cref{theorem:shap_unique}). Additionally, Shapley values can be used in all three states of machine learning modeling \cite[Section 5]{Li2024}:
	\begin{enumerate}[left=0pt]
		\item \textbf{Pre-modeling.} For feature selection and dimensionality reduction.
		\item \textbf{Mid-modeling.} For credit allocation in cooperative multi-agent reinforcement learning (MARL).
		\item \textbf{Post-modeling.} For data valuation and explaining model predictions.
	\end{enumerate}
	We hope that LandscapeSHAP becomes a tool that can be used in a similar fashion for machine learning with TDA.
	
	We (re)emphasize that all of this machinery is readily available and applicable to featurizations of persistence diagrams (\Cref{fig:tda_pipeline}). However, as practitioners of TDA we would like to explain model predictions in terms of the persistence homology classes, i.e. the persistence diagram points themselves. For example, imagine discovering that the persistence landscape coordinate $(5,23)$ has the largest Shapley value. This does not help us in any way determine the relevant persistence homology class for the model. Persistence diagrams are not vectors or elements of a Hilbert space and therefore the above described methods cannot be applied to them. Our manuscript is the first approach for solving this problem.
	
	\paragraph{Explainable TDA in biology.}	
	There has been a growing body of work on interpreting the outputs of TDA pipelines for biological data (see, e.g., \cite{math9151723,EDWARDS2021100367,Hartsock2025,Mishra2025}). The interpretation step in these approaches, however, is typically ad hoc, lacking the axiomatic uniqueness or stability guarantees a Shapley value based credit allocation provides. LandscapeSHAP, by contrast, does not depend on the application domain, and it comes with an axiomatically justified attribution rule, together with the stability and uniqueness results highlighted above.
	
	\paragraph{Explainable TDA via heatmaps on point clouds.}
	Bubenik, Wagner, and Yadav \cite{bubenik2026explainabletopologicaldataanalysis} tackle a different aspect of the explainability problem in TDA.  In particular, the persistence diagram is the endpoint and the goal is to allocate credit to the raw data itself (the pixels/voxels). Their method works by replacing the integer coefficient representative cycles produced by a persistence algorithm with an averaged, real coefficient ``persistence heatmap". These heatmaps are Lipschitz stable and uniformly continuous. However, their credit allocation, is learned in a task specific way. In contrast, our manuscript gives an axiomatic framework for fair credit allocation among persistence diagram points from an endpoint model. Together with our manuscript, this creates an opportunity of using the Shapley value credit allocation to persistence diagram points as the importance signal driving the persistence heatmap construction. This would potentially yield a stable, fair axiomatically justified attribution that runs all the way from the model's prediction down to the original geometric data.
	
	\paragraph{Deep learning on persistence diagrams.}
	In some cases, it is possible to use persistence diagrams directly as input to a deep learning architecture \cite{Carrire2019PersLayAN,reinauer2022persformer}, without any additional featurization. With these frameworks, computing the gradient of the network's output (or a chosen prediction score) with respect to each input point can produce a saliency map directly on the persistence diagram itself \cite{simonyan2014deep}. In \cite{Qin2023} the authors use persistence images, a featurization of persistence diagrams, with uniform weight $1$ at every pixel unlike standard persistence weights. They train a metric deep learning model whose goal is to learn the optimal pixel weights for the persistence images in order to get maximal separation between data classes. The metric deep learning model consists of a Convolutional Neural Network (CNN) with an attention module. The authors then apply the Grad-CAM explainability method in deep learning \cite{grad-cam} to produce a heatmap of importance on the persistence diagram. Unlike Shapley values in machine learning that allocate credit based on a model's prediction, Grad-CAM detects which pixels or regions in an image were important by looking at the gradients of the final layer of the neural network. Additionally, it might also be possible to combine the DeepSHAP method \cite{NIPS2017_8a20a862} with these approaches.
	
	\paragraph{Shapley values in applied topology.}
	There is recent work connecting the theory of Shapley values with applied topology, although in a very different direction than what was done in this manuscript.
	In \cite{Zhang2020}, the authors defined Shapley homology. More specifically, if $X$ is a finite metric space, let $\text{\v{C}}_{r}(X)$ be the \v{C}ech complex constructed from $X$ at scale $r\ge 0$. This definition depends on the scale $r$ chosen to construct the \v{C}ech complex, the dimension of homology of interest. In particular, this construction does not use persistent homology since the scale $r$ is assumed fixed. It assigns credit to an element of a finite metric space based on how much it contributes to the betti number on average, across coalitions. It does not allocate credit to persistent homology classes nor does it aim to explain predictions of models that use persistent homology for learning.
	
	\section{Preliminaries}
	\label{section:prelminaries}
	Here we recall the necessary background on Shapley values and persistence homology.
	
	\subsection{Shapley values}
	
		Here we recall cooperative games and the Shapley values associated to players in a cooperative game. 
		
		\begin{definition}
			A \emph{cooperative game in characteristic function form} is an ordered pair $(N,v)$ where $N$ is a finite set, the set of players, and the characteristic function $v$, $v:2^N\to \mathbb{R}$ with $v(\varnothing)=0$.
		\end{definition}
	
		A subset $S$ of $N$ is called a \emph{coalition} (of players). The number $v(S)$ gives the worth of the coalition $S$ in the game. If there is no confusion about the set $N$, we will typically write $v$ for $(N,v)$.
		
		The Shapley value is one way to divide up the value created by a coalition between its members. It is a way to allocate ``credit" or worth to individual players with respect to how much they contribute to the value $v(S)$, for all possible coalitions $S$.
		
		\begin{definition}
		\label{def:shap_value}
			The \emph{Shapley value}, $\varphi_i(v)$, of player $i$ in a cooperative game $(N,v)$ is given by 
				\[\varphi_i(v)\od \dfrac{1}{|N|}\sum_{S\subset N\setminus \{i\}}\binom{|N|-1}{|S|}^{-1}(v(S\cup\{i\})-v(S)),\]
				where $|S|$ and $|N|$ are the number of players in $S$ and $N$, respectively, and the sum is over all subsets $S$ of $N$ not containing $i$, including the empty set.  
		\end{definition}
		
		The formula for Shapley values is quite intuitive. Indeed, given a coalition $S$, player $i$ should demand their contribution $v(S\cup\{i\})-v(S)$ as fair compensation. Taking the average of this contribution over all possible ways in which the coalitions can be formed gives the Shapley value of player $i$. An equivalent, permutation based definition that computes the same value is also commonly used.
		
		\begin{definition}
		\label{def:shap_value_permutation}
			Let $\pi$ be a permutation drawn uniformly at random from the set $S_N$ of all $n!$ orderings of $N$ let $P_i^{\pi}=\{j\in N\,|\, \pi(j)<\pi(i)\}$ be the set of all players in $N$ that precede $i$ in $\pi$. Then the Shapley value of player $i$ is the permutation average:
			\[\varphi_i(v)\od \dfrac{1}{n!}\sum_{\pi \in S_N} (v(P_i^{\pi}\cup \{i\})-v(P_i^{\pi})).\]
		\end{definition}
		
		The calculation of Shapley values involves averaging marginal contributions over $2^{N-1}$ many coalitions which is computationally intractably unless for very small sets of players $N$. Nevertheless, the Shapley value has been widely studied from a theoretical point of view due to its many useful properties. In particular, the Shapley value is in fact the \emph{unique} ``fair" distribution of a player's worth in a game. By fair we mean an allocation of credit that satisfies the following fairness axioms.
		
		\begin{definition}
			Let $\psi_i(v)$ be a distribution of credit to player $i$ in a cooperative game $(N,v)$. We say that $\psi$ is \emph{fair} if it satisfies each of the following four axioms. 
			\begin{itemize}[left=0pt]
				\item \textbf{Efficiency.} $\sum_{i\in N}\psi_i(v)=v(N)$.
				\item \textbf{Symmetry.} If for all $S\subset N$ with $i,j\not\in S$ we have $v(S\cup\{i\})=v(S\cup\{j\})$, then $\psi_i(v)=\psi_j(v)$.
				\item \textbf{Linearity.} If two cooperative games with players in $N$ with characteristic functions $v$ and $w$ are combined into the cooperative game $v+w$, where $(v+w)(S)=v(S)+w(S)$, then $\psi_i(v+w)=\psi_i(v)+\psi_i(w)$.
				\item \textbf{Null player.} A player $i$ is \emph{null} if $v(S\cup\{i\})=v(S)$ for all coalitions $S$. If a player $i$ is null, then $\psi_i(v)=0$.
			\end{itemize}

		\end{definition}
		
		\begin{theorem}{\cite[Theorem 1]{Shapley1951}}
		\label{theorem:shap_unique}
			The Shapley value is the only fair allocation of credit to players in a cooperative game.
		\end{theorem}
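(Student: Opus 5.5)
The plan is to prove two things: that the Shapley value of \Cref{def:shap_value} satisfies the four fairness axioms, and that any fair allocation $\psi$ coincides with it. The game $(N,v)$ is arbitrary throughout. Existence will be a direct check against the coalition formula. Linearity is immediate because $\varphi_i$ is a linear functional of $v$. For the Null player axiom, every marginal contribution $v(S\cup\{i\})-v(S)$ vanishes, so $\varphi_i(v)=0$. For Symmetry, the bijection $S\mapsto S$ on coalitions avoiding both $i$ and $j$, combined with $S\cup\{j\}\leftrightarrow S\cup\{i\}$ on the rest, matches the summands of $\varphi_i$ and $\varphi_j$ term by term with equal weights, since those weights depend only on $|S|$. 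For Efficiency I will switch to the equivalent permutation form in \Cref{def:shap_value_permutation}. For a fixed ordering $\pi$, the marginal contributions of all players telescope to $v(N)-v(\varnothing)=v(N)$, and averaging over $\pi$ gives $\sum_i\varphi_i(v)=v(N)$.

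For uniqueness I will use the unanimity basis. For each nonempty $T\subseteq N$, let $u_T(S)=1$ if $T\subseteq S$ and $u_T(S)=0$ otherwise. The first step is to show these $2^{|N|}-1$ games form a basis of the vector space of games with $v(\varnothing)=0$. Define the Harsanyi dividends $c_T=\sum_{R\subseteq T}(-1)^{|T|-|R|}v(R)$. By Möbius inversion on the Boolean lattice, $v=\sum_{T\neq\varnothing}c_Tu_T$. This representation is unique because the count of games matches the dimension and the $u_T$ span.

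The second step is to pin down $\psi(c\,u_T)$ for every scalar $c$. Every $i\notin T$ is null in $c\,u_T$, so $\psi_i=0$. Any two $i,j\in T$ satisfy the Symmetry hypothesis, so they receive equal shares. Efficiency then forces $\psi_i(c\,u_T)=c/|T|$ for $i\in T$. Finally, Linearity applied to the decomposition $v=\sum c_Tu_T$ gives $\psi_i(v)=\sum_{T\ni i}c_T/|T|$. This expression is independent of $\psi$, so any two fair allocations agree. Since $\varphi$ is fair by the first paragraph, $\psi=\varphi$.

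I expect the main obstacle to be a subtlety in the Linearity axiom: as stated, it gives additivity only, not homogeneity. Some of the $c_T$ may be negative, and scalar multiples of $u_T$ are themselves games. To avoid needing $\psi(cu)=c\,\psi(u)$, I will apply Null player, Symmetry and Efficiency directly to each game $c_Tu_T$, as in the second step, rather than to $u_T$ alone. Then only additivity is needed to sum the pieces. The remaining care is to check that the Symmetry hypothesis holds for $c_Tu_T$ exactly when $i,j$ are both in $T$ or both outside it, and that $v(\varnothing)=0$ is preserved at every stage.
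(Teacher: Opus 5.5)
Your proof is correct and follows the standard argument behind the result the paper cites; the paper itself gives no proof, only the reference to Shapley. You verify the four axioms directly for the coalition formula, expand $v=\sum_{T\neq\varnothing}c_Tu_T$ in unanimity games, and pin down $\psi(c_Tu_T)$ via Null player, Symmetry and Efficiency, with your choice to apply those axioms to each scaled game $c_Tu_T$ being exactly what lets the paper's ``Linearity'' axiom (which as stated is only additivity) suffice.
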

	
	\subsection{Persistent homology}
	
	Let $X= \{x_1,\dots ,x_n\}$ be a finite set of points equipped with a metric. A \emph{simplicial complex} $K(X)$ on $X$ is a collection of subsets of $X$, called \emph{simplices}, such that if $\sigma$ is simplex and $\tau \subset \sigma$, then $\tau$ is a simplex as well. A \emph{filtration} is a family of simplicial complexes $\{K_r\}_{r\ge 0}$, indexed by a scale parameter $r$, such that $r\le r'$ implies $K_r \subset K_{r'}$. Two standard constructions build such a filtration from on the finite metric space $X$, and both will be used in \Cref{section:experiments}.
	
	\paragraph{Vietoris–Rips filtration.} The Vietoris-Rips complex, was introduced by Vietoris \cite{vietoris1927hoheren} as a means of associating a simplicial complex to a graph or a metric space. Given a finite metric space $(X,d)$ with no ambient coordinates required, the \emph{Vietoris-Rips complex} at scale $r$ is the simplicial complex defined by 
	\[\text{VR}_r(X) \od \{\sigma\subset X\, |\,  d(x,y) \le r\text{ for all }x,y\in \sigma\}.\]  
	
	 Due to its simple definition from pairwise distances and strong theoretical guarantees, the Vietoris–Rips complex has become a central construction in topological data analysis \cite{Carlsson2009}, with efficient computation made possible by algorithms such as Ripser \cite{Bauer2021Ripser}.

	\paragraph{Alpha complex filtration.} When $X\subset \mathbb{R}^d$ has genuine ambient coordinates, the \emph{alpha complex} at scale $r$ is the subcomplex of the Delaunay triangulation of $X$ consisting of those simplices $\sigma$ whose points share a common empty circumscribing ball of radius at most $r$. Because it sits inside the Delaunay triangulation, the alpha complex at any scale has only $O(n)$ simplices in the plane (as opposed to up to $O(2^n)$ for Vietoris–Rips), making it far cheaper computationally. However the alpha complex requires Euclidean coordinates rather than an arbitrary distance matrix, and its construction cost grows quickly with the ambient dimension $d$ \cite{1056714}.

	\paragraph{Persistent homology and persistence diagrams.} Applying the simplicial homology functor $H_p(\cdot; \mathbf{k})$ in a fixed degree $p$ to every complex in a given filtration, and to every inclusion $K_r\hookrightarrow K_{r'}$ , produces a sequence of $\mathbf{k}$ vector spaces and $\mathbf{k}$-linear maps induced by inclusions by linear maps. Such a collection is called a \emph{persistence module}. Under mild finiteness conditions this module decomposes uniquely into a multiset of intervals $[b_i,d_i)$ \cite{Zomorodian2004}. This data represents a topological feature born at scale $b_i$, the first $r$ at which it appears in $H_p(K_r)$, and dies at scale $d_i>b_i$, the first $r$ at which it becomes homologous to an older feature or is it becomes a boundary. The \emph{persistence diagram} $D= \{(b_i,d_i)\}_{i=1}^N$ is the resulting multiset of birth-death pairs. 
	
	\paragraph{Persistence landscapes.} A persistence diagram is a multiset of points, not a vector, so it cannot be fed directly into a standard statistical or machine-learning model. The persistence landscape is a featurization method that sends a persistence diagram into a function which is an element of a separable Banach space introduced by Bubenik in \cite{bubenik2015statistical}. Consider a persistence diagram consisting of $N$ birth-death pairs $\{(b_i,d_i)\}_{i=1}^N$ that has been converted into a persistence landscape $\lambda$ on a sampled grid of size $M$ with $K$ many layers. Each point $i$ in the persistence diagram contributes a tent function
	
		\[\ell_i(m)\od \min \left(\max\left(0,m-b_i\right),\max\left(0,d_i-m\right)\right)\ge 0,\]
		and the landscape value $\lambda(k,m)$ is the $k$-th largest value among $\{\ell_i(m)\}_{i=1}^N$, and is defined to be $0$ if fewer than $k$ points are present.  Alternatively, we can think of the landscape as a sequence of functions $\lambda_k:\mathbb{R}\to \mathbb{R}$.

	\section{Persistence landscapes as cooperative games}
	\label{section:pers_landscapes_games}
	Here we show how to view the persistence landscape function as a collection of cooperative games of coalitions of persistence diagram points. We show how to compute the Shapley values for these games with a closed form solution. Additionally, we show these credit allocations are stable.
	
	\subsection{Coalitions and games from tent functions}
		
		The  landscape $\lambda$ will eventually be an input into a (linear) model, which will give a particular prediction. We want to attribute this prediction back to the original diagram points $i$, not merely to the landscape coordinate $(k,m)$. The ``natural" way to define a coalition game over diagram points is the following. Given a subset $S\subset \{1,\dots, N\}$ of persistence diagram points, recompute the landscape as if only the points in $S$ existed. We argue that this is the most ``honest" notion of a point's contribution since removing a point causes the remaining points to shift rank and fill in the vacated layer indices with their own tent value and not with a placeholder. 
	
		\begin{remark}
		\label{remark:average_baseline}
		A cheaper alternative, is to fix once and for all what player ``owns" which coordinates $(k,m)$ in the full landscape and simply substitute a baseline value when an owner is removed. A standard choice in machine learning is to replace missing features by the mean of the data; in our case the mean landscape value at $(k,m)$ (where the mean is over all the landscapes of all the persistence diagrams). This avoids recomputing landscapes for different coalitions. However it has two additional problems:	
		\begin{enumerate}[left=0pt]
			\item It freezes the rank/ownership with respect to the full coalition $S=\{1,\dots, N\}$ and ignores the re-ranking a true removal induces.
			\item The persistence diagram points with nearly identical tent values receive a winner-take-all credit assignment determined by an arbitrary tie-break, rather than sharing credit proportional to how interchangeable they are.
		\end{enumerate}
		\end{remark}
		
		Consider a landscape function $\lambda:\{1,\dots, K\}\times \mathbb{R}\to \mathbb{R}$. Let $(k,m)$ be a landscape coordinate and define the characteristic function
	
		\[v_{(k,m)}(S)\od \begin{cases}
		\text{the k-th largest value among }\{\ell_i(m)\,|\, i\in S\}, & |S|\ge k\\
		0, & |S|< k
		\end{cases}.\]
		In other words, $v_{(k,m)}(S)$ is exactly the $k$-th landscape value at the point $m$, evaluated on the subdiagram consisting of the points in $S$. For $S=\{1,\dots, N\}$, $v_{(k,m)}(S)$ is precisely the original landscape $\lambda$ at the coordinate $(k,m)$. For $S=\varnothing$, $v_{(k,m)}(S)=0$. We view $v_{(k,m)}(S)$ as the value of a cooperative game with coalition $S$ and we want to compute the Shapley value $\varphi_i(v_{(k,m)})$ of every player $i$ in this game. 
	
		\begin{remark}
			Note that $v_{(k,m)}$ is a game over the landscape coordinate $(k,m)$, with no dependency to any downstream model. Therefore, $\varphi_i(v_{(k,m)})$ is model-independent and it answers ``how is this one value fairly built up from the diagram points" and not ``how much does a particular model's prediction change because of point $i$". The model will enter into the picture later.
		\end{remark}
	
	\subsection{closed form solution}
	
		Let $m\in \mathbb{R}$ and let $t_{(1,m)}\ge t_{(2,m)}\ge \cdots\ge t_{(N,m)}$ be the sorting of the tent values $\ell_i(m)$, for $i=1,\dots, N$, in decreasing order and set $t_{(N+1,m)}=0$. Let $r_i\in \{1,\dots, N\}$ denote the rank of player $i$ in this order, with $1$ being the largest. That is $r_i=j$ if there is an index $j$ such that $\ell_i(m)=t_{(j,m)}$. Then we have the following result. 	
		
		\begin{theorem}
		\label{theorem:closed_form_landscapes}
			The Shapley value of the player $i$ in the game $v_{(k,m)}$ is 
		
			\[\varphi_i(v_{(k,m)})=\sum_{j=\max (r_i,k)}^N\dfrac{t_{(j,m)}-t_{(j+1,m)}}{j}.\]
		\end{theorem}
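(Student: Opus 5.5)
The plan is to write the order-statistic game as a nonnegative combination of unanimity-type threshold games, compute the Shapley value of each threshold game by a pivotal-player argument, and then sum the pieces using Linearity (Theorem~\ref{theorem:shap_unique}, or directly from Definition~\ref{def:shap_value}).

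Throughout, fix $m$ and drop it from the notation, writing $t_j=t_{(j,m)}$. Fix also a strict tie-breaking order, so that every player has a well-defined rank $r_i$ and player $i$ is the one with $\ell_i(m)=t_{r_i}$.

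\textbf{Layer-cake decomposition.} For a coalition $S$ with $|S|\ge k$, the value $v_{(k,m)}(S)$ is the $k$-th largest of $\{\ell_i(m)\}_{i\in S}$. Since all values are nonnegative, I would write
\[v_{(k,m)}(S)=\int_0^\infty \mathbf{1}\bigl[\#\{i\in S:\ell_i(m)\ge s\}\ge k\bigr]\,ds.\]
This formula also returns $0$ when $|S|<k$. For $s\in(t_{j+1},t_j]$, the set $A_j=\{i:r_i\le j\}$ of the top $j$ players (under the fixed tie-break) is exactly the set of players with tent value $\ge s$. The integrand is therefore constant on each such interval. This gives
\[v_{(k,m)}=\sum_{j=1}^N (t_j-t_{j+1})\,u_{j,k},\qquad u_{j,k}(S)=\mathbf{1}\bigl[|S\cap A_j|\ge k\bigr].\]
Ties between equal $t_j$ only produce zero-length intervals, so the decomposition does not depend on the tie-break.

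\textbf{Shapley value of a threshold game.} If $j<k$, then $u_{j,k}\equiv 0$ and all its Shapley values are $0$. If $j\ge k$, the game $u_{j,k}$ is a symmetric $k$-out-of-$j$ voting game on $A_j$. Players outside $A_j$ are null players, so they receive $0$. By Symmetry, all members of $A_j$ receive equal credit, and by Efficiency the total credit is $u_{j,k}(N)=1$. Each member of $A_j$ therefore receives $1/j$. One can also see this with the permutation form (Definition~\ref{def:shap_value_permutation}): the pivotal player is the $k$-th member of $A_j$ to arrive, and by exchangeability each member is equally likely to be in that position. Hence
\[\varphi_i(u_{j,k})=\frac{\mathbf{1}[r_i\le j]\,\mathbf{1}[j\ge k]}{j}.\]

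\textbf{Summation.} By Linearity,
\[\varphi_i(v_{(k,m)})=\sum_{j}(t_j-t_{j+1})\frac{\mathbf{1}[j\ge \max(r_i,k)]}{j},\]
which is exactly the claimed formula.

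\textbf{Main obstacle.} The step needing the most care is the layer-cake decomposition with ties. One has to check that with tied tent values the game is unchanged whichever tie-break is used, since the $u_{j,k}$ attached to zero-width intervals carry zero weight. One also has to confirm that the padding convention for $|S|<k$ agrees with the integral representation, which holds because all values are nonnegative. Finally, the statement's definition of $r_i$ is ambiguous under ties. I would note that the right-hand side is nonetheless well defined: tied players get sums that differ only by terms $t_j-t_{j+1}=0$.
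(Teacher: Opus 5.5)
Your proof is correct and follows the same route as the paper. Both write $v_{(k,m)}$ as a layer-cake superposition of threshold games of the form $\mathbf{1}[\#\{\text{players in } S \text{ above the threshold}\}\ge k]$, give each player above the threshold an equal share $1/(\text{number of such players})$ via Symmetry, Null player and the pivotal-arrival argument, and finish with Linearity.

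The only differences are presentational. You collapse the integral into the finite sum $\sum_j (t_j-t_{j+1})u_{j,k}$ before applying Linearity, which spares the paper's step of passing Linearity under $\int_0^\infty d\tau$. You also treat ties and the well-definedness of $r_i$ explicitly, which the paper leaves implicit.
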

	
		\begin{proof}
		For each $\tau\ge 0$, define the game $w_{\tau}$ on the player set $\{1,\dots, N\}$ by 
		\[w_{\tau}(S)\od \begin{cases}
			1, & |\{i\in S\,|\, \ell_i(m)>\tau\}|\ge k\\
			0, &  |\{i\in S\,|\, \ell_i(m)>\tau\}|< k
		\end{cases},\]
		for any coalition $S$. Let $S$ be fixed and let $M_S$ denote the $k$-th largest value among $\{\ell_i(m)\,|\, i\in S\}$, with $M_S=0$ if $|S|< k$. By construction, $w_{\tau}(S)=1$ exactly when $\tau< M_S$. Otherwise, $w_{\tau}(S)=0$. In particular, for a fixed $S$, $\tau\mapsto w_{\tau}(S)$ is precisely the indicator function of the interval $[0,M_S)$ and therefore
		\[\int_{0}^{\infty}w_{\tau}(S)d\tau=\int_{0}^{M_S} 1d\tau = M_S=v_{(k,m)}(S).\]
		This holds for every $S$ and hence $v_{(k,m)}=\int_{0}^{\infty}w_{\tau}d\tau$. 
		
		Let $\tau\ge 0$ be fixed and let $B(\tau)\od\left|\{i\in \{1,\dots, N\}\,|\, \ell_i(m)>\tau\}\right|$. In other words, $B(\tau)$ is the number of ``big" players with tent values above the threshold $\tau$, i.e., players $i$ with $\ell_i(m)>\tau$. For big players $i$ and $j$ and a coalition $S$ with $i,j\not\in S$, adding $i$ or $j$ increases the big player number by exactly $1$ and thus $w_{\tau}(S\cup\{i\})=w_{\tau}(S\cup\{j\})$. Therefore, big players are symmetric. Recall that symmetric players get equal Shapley values. For a ``small" player $i$ with $\ell_i(m)\le \tau$, this player does not contribute to $w_{\tau}(S)$ and is thus null. Recall that null players have Shapley value of $0$. 
		
		If $B(\tau)<k$, then $w_{\tau}=0$ and every player's Shapley value is $0$. Else, in any permutation of the big players, only one of them, the one that has rank $k$, makes $w_{\tau}$ change from $0$ to $1$. Every other player has contribution $0$ in that permutation. Since the player at rank $k$ is uniformly distributed over the $B(\tau)$ many big players, each big player's Shapley value is $\frac{1}{B(\tau)}$, and each small player's value is $0$. In particular, let $\mathbf{1}_{\{\ell_i(m)>\tau\}}$ and $\mathbf{1}_{\{B(\tau)\ge k\}}$ be indicator functions on the sets $\{\tau\,|\, \ell_i(m)>\tau\}$ and $\{\tau\,|\, B(\tau)\ge k\}$, respectively. Then, from the observations made so far, we have
		
		\[\varphi_i(w_{\tau})=\dfrac{\mathbf{1}_{\{\ell_i(m)>\tau\}}\cdot \mathbf{1}_{\{B(\tau)\ge k\}} }{B(\tau)}.\]
		
		By the linearity axiom of Shapley values we thus have
		
		\[\varphi_i(v_{(k,m)})=\int_{0}^{\infty}\varphi_i(w_{\tau})d\tau=\int_{0}^{\ell_i(m)}\dfrac{\mathbf{1}_{\{B(\tau)\ge k\}}}{B(\tau)}d\tau.\]

		Observe that $B(\tau)\ge k$ if and only if $\tau<t_{(k,m)}$ and $B(\tau)=j$ for $\tau \in [t_{(j+1,m)},t_{(j,m)})$. Therefore, the integral runs from $0$ up to $\min(\ell_i(m),t_{(k,m)})=t_{(\max(r_i,k),m)}$. Furthermore, $\dfrac{\mathbf{1}_{\{B(\tau)\ge k\}}}{B(\tau)}$ is a step function and splitting the integral at the sorted endpoints gives exactly
		\[\varphi_i(v_{(k,m)})=\sum_{j=\max (r_i,k)}^N\dfrac{t_{(j,m)}-t_{(j+1,m)}}{j}. \qedhere\]
		\end{proof}
		
	\begin{remark}
		Players ranked at or above the target layer, $r_i\le k$, all receive the same value $\varphi_i(v_{(k,m)})=\sum_{j=k}^{N}\dfrac{t_{(j,m)}-t_{(j+1,m)}}{j}$, i.e., they are symmetric contributors to reaching the $k$-order statistic at the grid point $m$, regardless of how much their raw values differ. Players ranked below get a strictly smaller, partial sum. Because the formula depends on rank and consecutive gaps instead of an index-based tiebreak, two persistence diagram points with nearly equal values automatically receive nearly equal attribution, i.e., there is no arbitrary winner. Furthermore, the credit allocation happens independently at every landscape coordinate $(k,m)$. In particular, a point that is for example rank $1$ at one grid location $m$ can be rank $5$ at another location. These values are only combined into a single value per persistence diagram point once we wish to allocate credit based on a model's prediction later in the manuscript.
	\end{remark}
	
	By \Cref{theorem:shap_unique}, the closed form in \Cref{theorem:closed_form_landscapes} is the only way  to fairly allocate credit to each player, in the game $v_{(k,m)}$.
	
	\begin{corollary}
	\label{corollary:uniqueness_landscape_games}
		For each ordered pair $(k,m)$, $\varphi(v_{(k,m)})$ is the only credit allocation rule satisfying Efficiency, Symmetry, Linearity and Null player axioms for the game $v_{(k,m)}$.
	\end{corollary}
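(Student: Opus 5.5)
The corollary follows from \Cref{theorem:shap_unique} applied to the particular game $(\{1,\dots,N\},v_{(k,m)})$, with \Cref{theorem:closed_form_landscapes} supplying the explicit form of the allocation. I will argue in three steps.

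\textbf{Step 1: $v_{(k,m)}$ is a legitimate cooperative game.} The player set $\{1,\dots,N\}$ is finite. Since $|\varnothing|=0<k$, the definition gives $v_{(k,m)}(\varnothing)=0$. Hence $v_{(k,m)}$ is a characteristic function, and the hypotheses of \Cref{theorem:shap_unique} are met.

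\textbf{Step 2: existence.} The Shapley value $\varphi(v_{(k,m)})$ of \Cref{def:shap_value} satisfies the four axioms; this is the existence half of \Cref{theorem:shap_unique}. By \Cref{theorem:closed_form_landscapes}, this allocation equals the closed form
\[\varphi_i(v_{(k,m)})=\sum_{j=\max (r_i,k)}^N\dfrac{t_{(j,m)}-t_{(j+1,m)}}{j}.\]
So the rule named in the corollary is fair.

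\textbf{Step 3: uniqueness.} Suppose $\psi$ is any credit allocation satisfying Efficiency, Symmetry, Linearity and Null player. By the uniqueness half of \Cref{theorem:shap_unique}, $\psi_i(v)=\varphi_i(v)$ for every game $v$ on the player set. In particular $\psi(v_{(k,m)})=\varphi(v_{(k,m)})$.

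\textbf{Main obstacle: the scope of Linearity.} Linearity involves sums $v+w$ of games. It therefore only constrains a rule that is defined on the whole vector space of games on $\{1,\dots,N\}$, not on the single game $v_{(k,m)}$ in isolation. I will read the corollary in the standard sense: $\psi$ is a rule on all games, and the claim concerns its value at $v_{(k,m)}$. This is exactly the setting of Shapley's theorem.

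If the uniqueness argument is to be made explicit, I would use the unanimity-game decomposition. Every game is a unique linear combination of the unanimity games $u_T$, where $u_T(S)=1$ if $T\subset S$ and $0$ otherwise. On each scaled game $c\,u_T$ the axioms force the allocation: players outside $T$ are null, so they get $0$; players in $T$ are symmetric, and Efficiency then gives each of them $c/|T|$. Linearity extends this forced value to $v_{(k,m)}$.

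The decomposition of $v_{(k,m)}$ into unanimity games can have coefficients of either sign. To handle negative coefficients, I would use that Linearity implies $\psi(-v)=-\psi(v)$, via $\psi(0)=0$, which follows from the Null player axiom.
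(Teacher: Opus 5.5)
Your proposal is correct and follows the paper's argument, which simply invokes Shapley's uniqueness theorem (\Cref{theorem:shap_unique}) for the game $v_{(k,m)}$, with \Cref{theorem:closed_form_landscapes} identifying the resulting allocation. Your check that $v_{(k,m)}(\varnothing)=0$, your reading of Linearity as a constraint on a rule defined on all games over $\{1,\dots,N\}$, and your unanimity-game sketch just make explicit what the paper leaves to the cited theorem; the remark about $\psi(-v)=-\psi(v)$ is unnecessary, since you already apply the axioms directly to $c\,u_T$ for arbitrary real $c$, but it is harmless.
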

	
	The following is a toy example of computing the Shapley values of the games $v_{(k,m)}$.
	
	\begin{example}
		Consider a  $3$-point persistence diagram $A=(1,5)$, $B=(2,6)$, $C=(3,4)$. The associated tent functions and the persistence landscapes are illustrated in \Cref{fig:1}.
		
		\begin{itemize}[left=0pt]
			\item For $m<3.5$, the $1$-st landscape, $\lambda_1$, comes entirely from the tent function $\ell_A$, so for these coordinates of the landscape function, the point $A$ receives full credit, i.e., $\varphi_A(v_{(1,m)})=\lambda_1(m),\varphi_B(v_{(1,m)})=\varphi_C(v_{(1,m)})=0$. 
			\item At $m=3.5$, the two tent functions $\ell_A$ and $\ell_B$ intersect at the value $\ell_A(3.5)=\ell_B(3.5)=1.5$. Since $A$ and $B$ are tied for top two ranks at this point they split the credit and thus $\varphi_A(v_{(1,3.5)})=\varphi_B(v_{(1,3.5)})=\lambda_1(3.5)/2=0.75$ and $\varphi_C(v_{(1,3.5)})=0$.
			\item For $m>3.5$, the situation is analogous to the case $m<3.5$ except now the first landscape comes entirely from the tent function $\ell_B$, and so $\varphi_B(v_{(1,m)})=\lambda_1(m),\varphi_A(v_{(1,m)})=\varphi_C(v_{(1,m)})=0$. 
			\item For $m<3.5$, the $2$-nd landscape, $\lambda_2$, comes entirely from the tent function $\ell_B$, so for these coordinates of the landscape function, the point $B$ receives full credit, i.e., $\varphi_B(v_{(2,m)})=\lambda_2(m),\varphi_A(v_{(2,m)})=\varphi_C(v_{(2,m)})=0$. 
			\item At $m=3.5$, the two tent functions $\ell_A$ and $\ell_B$ intersect at the value $\ell_A(3.5)=\ell_B(3.5)=1.5$. Since $A$ and $B$ are tied for top two ranks at this point they split the credit and thus $\varphi_A(v_{(2,3.5)})=\varphi_B(v_{(2,3.5)})=\lambda_2(3.5)/2=0.75$ and $\varphi_C(v_{(2,3.5)})=0$.
			\item For $m>3.5$, the situation is analogous to the case $m<3.5$ except now the second landscape layer comes entirely from the tent function $\ell_A$, and so $\varphi_A(v_{(2,m)})=\lambda_2(m),\varphi_B(v_{(2,m)})=\varphi_C(v_{(2,m)})=0$. 
			\item For any $m$, the $3$-rd landscape $\lambda_3$ comes entirely from the tent function $\ell_C$, so for these coordinates of the landscape function, the point $C$ receives full credit, i.e., $\varphi_C(v_{(3,m)})=\lambda_3(m),\varphi_A(v_{(3,m)})=\varphi_B(v_{(3,m)})=0$.
		\end{itemize}
	\end{example}
	
	\begin{figure}[H]
		\centering
			\begin{tikzpicture}[scale=0.85]
				\draw[->] (0,0)--(6.5,0) node[right] {$b$ (birth)};
				\draw[->] (0,0)--(0,6.5) node[above] {$d$ (death)};
				\draw[dashed] (0,0)--(6.3,6.3);
				\filldraw[blue] (1,5) circle (2.5pt) node[left=2pt]{A};
				\filldraw[red] (2,6) circle (2.5pt) node[above=2pt]{B};
				\filldraw[green!55!black] (3,4) circle (2.5pt) node[below=1pt]{C};
				\node at (3.2,-1.1) {(a) persistence diagram};
				
				\begin{scope}[shift={(9,4)}]
					\draw[->] (0,0)--(6.5,0) node[right] {$m$};
					\draw[->] (0,0)--(0,3) node[above] {$\lambda(m)$};
					\draw[blue, line width=1.5pt] (1,0)--(3,2)--(5,0);
					\draw[red,line width=1.5pt] (2,0)--(4,2)--(6,0);
					\draw[green!55!black, line width=1.5pt] (3,0)--(3.5,0.5)--(4,0);
					\node[blue] at (1.75,1.35) {$\ell_A$};
					\node[red] at (5.25,1.35) {$\ell_B$};
					\node[green!55!black] at (3.5,0.85) {$\ell_C$};
			
				\end{scope}
				
				\begin{scope}[shift={(9,0)}]
					\draw[->] (0,0)--(6.5,0) node[right] {$m$};
					\draw[->] (0,0)--(0,3) node[above] {$\lambda(m)$};
					\draw[RoyalBlue, line width=1.5pt] (1,0)--(3,2)--(3.5,1.5)--(4,2)--(6,0);
					\draw[Maroon,line width=1.5pt] (2,0)--(3.5,1.5)--(5,0);
					\draw[TealBlue, line width=1.5pt] (3,0)--(3.5,0.5)--(4,0);
					\node[RoyalBlue] at (1.75,1.35) {$\lambda_1$};
					\node[Maroon] at (2.6,1.15) {$\lambda_2$};
					\node[TealBlue] at (3.5,0.85) {$\lambda_3$};
					\node at (3.2,-1.1) {(b) persistence landscapes};
				\end{scope}
			\end{tikzpicture}
		\caption{Persistence diagram and landscape example. (panel a) and their associated tent functions $\ell_A$, $\ell_B$ and $\ell_C$ together with the three persistence landscape layers (panel b). }
		\label{fig:1}
	\end{figure}
	
	\subsection{Monotonicity}
	
	Recall that a cooperative game $v$ is \emph{monotone} if for every pair of coalitions $S,T$ with $S\subset T$, we have $v(S)\le v(T)$.
	
	\begin{proposition}
		For every pair $(k,m)$, the game $v_{(k,m)}$ is monotone. Furthermore, $\varphi_i(v_{(k,m)})\ge 0$ for every player $i$. 
	\end{proposition}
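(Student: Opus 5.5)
Monotonicity will follow from a direct comparison of order statistics, and nonnegativity from either the closed form in \Cref{theorem:closed_form_landscapes} or the definition of the Shapley value.

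\textbf{Monotonicity.} Fix $(k,m)$ and coalitions $S\subset T$. We split into cases according to $|S|$.
\begin{itemize}[left=0pt]
\item If $|S|<k$, then $v_{(k,m)}(S)=0$. Every tent value satisfies $\ell_i(m)\ge 0$, so $v_{(k,m)}(T)\ge 0$ whether or not $|T|\ge k$.
\item If $|S|\ge k$, then $|T|\ge k$ as well. Let $M_S$ be the $k$-th largest value among $\{\ell_i(m)\,|\,i\in S\}$. There are at least $k$ indices $i\in S\subset T$ with $\ell_i(m)\ge M_S$. Hence the $k$-th largest value over $T$ is at least $M_S$, that is, $v_{(k,m)}(T)\ge v_{(k,m)}(S)$.
\end{itemize}

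A cleaner alternative uses the threshold decomposition from the proof of \Cref{theorem:closed_form_landscapes}. The count $|\{i\in S\,|\,\ell_i(m)>\tau\}|$ is monotone in $S$, so each $w_\tau$ is monotone. Since $v_{(k,m)}=\int_0^\infty w_\tau\,d\tau$, integrating preserves the inequality. I would present the direct argument and mention this alternative as a remark.

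\textbf{Nonnegativity.} I would give two routes.
\begin{itemize}[left=0pt]
\item \emph{From the closed form.} Since $t_{(1,m)}\ge\cdots\ge t_{(N,m)}\ge t_{(N+1,m)}=0$, every gap $t_{(j,m)}-t_{(j+1,m)}$ is nonnegative. Every denominator $j$ is positive. So each summand of $\varphi_i(v_{(k,m)})=\sum_{j=\max(r_i,k)}^N (t_{(j,m)}-t_{(j+1,m)})/j$ is nonnegative, and hence so is the sum.
\item \emph{From the definition.} This route is conceptual and model-free. In \Cref{def:shap_value}, every marginal contribution $v(S\cup\{i\})-v(S)$ is nonnegative by monotonicity. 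The weights $\frac{1}{|N|}\binom{|N|-1}{|S|}^{-1}$ are positive. Therefore the weighted sum is nonnegative.
\end{itemize}
I would give the second route as the main argument, since it explains why the closed form is nonnegative, and note the first as a consistency check.

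\textbf{Main obstacle.} The only delicate point is the order-statistic comparison when there are ties or when $|T|\ge k>|S|$. Both are handled by phrasing the comparison through the count of elements with $\ell_i(m)\ge M_S$ (respectively through $w_\tau$), together with the fact that tent values are nonnegative, so the zero padding is never larger than a genuine value.
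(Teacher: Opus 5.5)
Your proposal is correct and essentially matches the paper: the paper proves monotonicity through your ``cleaner alternative'' (each threshold game $w_\tau$ satisfies $w_\tau(S)\le w_\tau(T)$ for $S\subset T$, and integrating over $\tau$ preserves this), then gets nonnegativity from nonnegative marginal contributions in the Shapley formula, which is your main route. Your direct order-statistic comparison is an equally valid and slightly more elementary proof of monotonicity, since it does not rely on the integral representation from the proof of \Cref{theorem:closed_form_landscapes}.
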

	
	\begin{proof}
		For $\tau\ge 0$, recall the game $w_{\tau}(S)$ from the proof of \Cref{theorem:closed_form_landscapes}. Recall that $v_{(k,m)}(S)=\int_0^{\infty}w_{\tau}(S)d\tau$. If $S\subset T$, it follows from definition that $w_{\tau}(S)\le w_{\tau}(T)$ and therefore $v_{(k,m)}(S)\le v_{(k,m)}(T)$. Therefore, the game $v_{(k,m)}$ is monotone. In particular, if $i$ is any player we have that the marginal contribution $v_{(k,m)}(S\cup\{i\})-v_{(k,m)}(S)\ge 0$ for any coalition $S$ and thus every Shapley value is nonnegative.
	\end{proof}
	
	\begin{remark}
		Later on we will see that Shapley values of diagram points can be negative with respect to a linear model trained on landscapes. By the previous result, the Shapley values of individual landscapes can never be negative. This means that any negative Shapley value of persistence diagram point comes directly from the model's coefficients and not from the underlying organization of the persistence diagram.
	\end{remark}
	
	\subsection{Stability}
	Given two persistence diagrams, $D$ and $D'$, a \emph{matching} between them is a bijection $\gamma$ between $D\cup \Delta$ and $D'\cup \Delta$ where $\Delta=\{(x,x)\, |\, x\in \mathbb{R}\}$ is the diagonal taken with infinite multiplicity. In particular, every off diagonal point in $D$ is either matched with an off diagonal point in $D'$ or implicitly matched with its nearest point in $\Delta$, and similarly for points in $D'$. This is what allows for persistence diagrams with different number of points to be compared. The \emph{bottleneck distance} is the infimum, over all such matchings $\gamma$, of the largest cost $||(b,d)-\gamma(b,d)||_{\infty}$ any matched pair of persistence diagram points $(b,d)$ undergoes. In particular,
	\[d_B(D,D')\od\inf_{\gamma}||(b,d)-\gamma(b,d)||_{\infty},\]
	where $\gamma$ is a matching between $D$ and $D'$.
	The classical stability theorem for persistence diagrams \cite{CohenSteiner2006} states that if the data is perturbed by $\varepsilon$, the bottleneck distance between the corresponding persistence diagrams does not change by more than $\varepsilon$. Persistence landscapes also have this stability directly \cite{bubenik2015statistical}. In particular, the persistence landscape map is Lipschitz with respect to $d_B$ (equivalently, the $p$-Wasserstein distances for $p\ge 1$), so a small bottleneck perturbation of the diagram produces a correspondingly small change in every landscape coordinate. Therefore, it is natural to ask if there are similar stability results for the Shapley values of persistence diagram points. We answer that question positively with \Cref{theorem:stability_for_landscape_games} below.

	We may assume that persistence diagrams $D$ and $D'$ have an equal number of points $N$ with a fixed correspondence between them. Let $\gamma$ be any matching (e.g., a $d_B$-optimal one, if $\varepsilon$ below is to be taken equal to $d_B(D,D')$) and pad whichever diagram has fewer off diagonal points with copies of its matched diagonal points, so that $\gamma$ becomes an honest bijection from points in $D$ to points in $D'$.  Note that for a diagonal point, $(x,x)$, the tent function is identically $0$, $\ell (m)=0$ for every $m$, making it a null player in every game $v_{(k,m)}$ and hence the assigned Shapley value is $0$ by \Cref{theorem:closed_form_landscapes}. Therefore, introducing diagonal points to equalize the sizes between diagrams never changes any ``real" player’s Shapley value.

	Let $D,D'$ be two diagrams indexed by the same set $\{1,\dots ,N\}$ via a fixed bottleneck distance optimal matching between them, per the discussion above. For simplicity, let point $i$ of $D$ and point $i$ of $D'$ denote the two ends of a single matched pair, $(b_i,d_i) \in D$ and $(b_i',d_i')\in D'$, not merely the $i$-th point of each diagram under some unrelated ordering. Let $\ell_i(m)$ and $\ell'_i(m)$ be the tent functions of the point $i$ in $D$ and $D'$, respectively.  Let $\varphi_r(v_{(k,m)}(D))$ and $\varphi_r(v_{(k,m)}(D'))$ be the Shapley values of a player at rank $r$, at the coordinate $(k,m)$, for the games $v_{(k,m)}$ constructed from the diagrams $D$ and $D'$, respectively. Then we have the following result.
	
	\begin{theorem}[Stability for Shapley values of landscape games]
	\label{theorem:stability_for_landscape_games}
		For every rank $r$, we have the following inequality
		\[|\varphi_r(v_{(k,m)}(D))-\varphi_r(v_{(k,m)}(D'))|\le \left(\dfrac{2}{\max(r,k)}-\dfrac{1}{N}\right)\varepsilon\le 2\varepsilon,\]
		where $\varepsilon=\underset{i}{\max}\max(|b_i-b_i'|,|d_i-d_i'|)$, i.e., $\varepsilon=d_B(D,D')$. 
	\end{theorem}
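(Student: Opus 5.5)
The plan is to turn the closed form of \Cref{theorem:closed_form_landscapes} into a weighted sum of order statistics, and then use the fact that order statistics are $1$-Lipschitz in the sup norm. Throughout, write $J=\max(r,k)$ and $t_j=t_{(j,m)}$ for $D$, and $t'_j=t'_{(j,m)}$ for $D'$, with $t_{N+1}=t'_{N+1}=0$. Note that the statement compares the player of rank $r$ in $D$ with the player of rank $r$ in $D'$, not matched players. So only the two sorted lists of tent values at $m$ are involved, and $J$ is the same for both diagrams.

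\textbf{Step 1 (Abel summation).} Summing by parts in the formula of \Cref{theorem:closed_form_landscapes}, and using $t_{N+1}=0$, I will rewrite
\[\varphi_r(v_{(k,m)}(D))=\sum_{j=J}^{N}\frac{t_j-t_{j+1}}{j}=\frac{t_J}{J}-\sum_{j=J+1}^{N}\frac{t_j}{j(j-1)},\]
and similarly for $D'$. The difference of the two Shapley values is then
\[\frac{t_J-t'_J}{J}-\sum_{j=J+1}^{N}\frac{t_j-t'_j}{j(j-1)}.\]

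\textbf{Step 2 (tent functions are $1$-Lipschitz).} For each matched pair, $|\ell_i(m)-\ell_i'(m)|\le\max(|b_i-b_i'|,|d_i-d_i'|)\le\varepsilon$. This holds because $x\mapsto\max(0,x)$ and $\min$ are $1$-Lipschitz, and $m-b_i$ and $d_i-m$ each move by at most $\varepsilon$. Padded diagonal points are handled the same way: they sit at distance at most $\varepsilon$ from their partners under the matching, so the same bound applies.

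\textbf{Step 3 (order statistics are $1$-Lipschitz).} This is the main point to argue carefully. I need that if two vectors satisfy $\|x-x'\|_\infty\le\varepsilon$, then their $j$-th largest entries differ by at most $\varepsilon$. I will use the min–max characterization
\[t_j=\max_{|A|=j}\ \min_{i\in A}\ell_i(m).\]
Since $\ell_i'(m)\ge\ell_i(m)-\varepsilon$ for every $i$, taking the optimal $A$ for $D$ gives $t'_j\ge t_j-\varepsilon$. The symmetric argument gives $t_j\ge t'_j-\varepsilon$, so $|t_j-t'_j|\le\varepsilon$ for all $j$.

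\textbf{Step 4 (combine).} Applying the triangle inequality to the expression from Step 1, and using the telescoping identity $\sum_{j=J+1}^N\frac{1}{j(j-1)}=\frac1J-\frac1N$, gives
\[|\varphi_r(v_{(k,m)}(D))-\varphi_r(v_{(k,m)}(D'))|\le\frac{\varepsilon}{J}+\Big(\frac1J-\frac1N\Big)\varepsilon=\Big(\frac{2}{\max(r,k)}-\frac1N\Big)\varepsilon.\]
The final bound $\le 2\varepsilon$ follows from $J\ge1$.

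The only genuinely nontrivial ingredient is Step 3. A naive approach that tracks individual players fails, because ranks can swap under perturbation. That is exactly why I work with rank-indexed quantities and the min–max form rather than with player indices.
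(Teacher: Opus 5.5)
Your proposal is correct and is essentially the paper's proof. The paper likewise shows that each tent value moves by at most $\varepsilon$, then that each order statistic $t_{(j,m)}$ does (its top-$j$ set $C$ argument is exactly your max--min characterization), writes $\varphi_r=\sum_{j\ge a}c_j t_{(j,m)}$ with $c_a=\frac1a$ and $c_j=\frac1j-\frac1{j-1}$ (your Abel summation), and telescopes $\sum_j|c_j|=\frac2a-\frac1N$. One caveat you share with the paper is that this telescoping identity needs $\max(r,k)\le N$: if $k>N$ the game $v_{(k,m)}$ is identically zero, so the left side is $0$ while $\bigl(\frac{2}{k}-\frac1N\bigr)\varepsilon$ is negative once $k>2N$, and the bound should be stated for $k\le N$ (with the trivial bound $0$ otherwise).
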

	
	\begin{proof}
		Consider a player $i$, and let $A=\max(0,m-b_i)$, $A'=\max(0,m-b_i')$, $B=\max(0,d_i-m)$ and $B'=\max(0,d_i'-m)$. Then, by definition, $\ell_i(m)=\min (A,B)$ and $\ell_i'(m)=\min(A',B')$.
		The mappings $b\mapsto \max(0,m-b)$ and $d\mapsto \max(0,d-m)$ are $1$-Lipschitz in $b$ and $d$, respectively. Therefore, $|A-A'|\le |b_i-b'_i|$ and $|B-B'|\le |d_i-d_i'|$. Now consider all the following possible cases:
		\begin{enumerate}[left=0pt]
			\item Suppose that $\min(A,B)\ge \min (A',B')$ and that $\min(A',B')=A'$. Then, $\min (A,B)\le A\le A'+|A-A'|=\min(A',B')+|A-A'|$. 
			\item Suppose that $\min(A,B)\ge \min (A',B')$ and that $\min(A',B')=B'$. Then, $\min (A,B)\le B\le B'+|B-B'|=\min(A',B')+|B-B'|$. 
			\item Suppose that $\min(A,B)\le \min (A',B')$ and that $\min(A,B)=A$. Then, $\min (A',B')\le A'\le A+|A-A'|=\min(A,B)+|A-A'|$. 
			\item Suppose that $\min(A,B)\le \min (A',B')$ and that $\min(A,B)=B$. Then, $\min (A',B')\le B'\le B+|B-B'|=\min(A,B)+|B-B'|$. 
		\end{enumerate}
		From all these cases we conclude that  $\min$ is $1$-Lipschitz in its two arguments with respect to the sup norm, i.e., we have $|\min(A,B)-\min(A',B')|\le \max(|A-A'|,|B-B'|)$. Therefore, we have the inequality
		
		\[|\ell_i(m)-\ell_i'(m)|\le \max(|A-A'|,|B-B'|)\le \max(|b_i-b_i'|,|d_i-d_i'|)\le \varepsilon,\]
		for every player $i$. 
		
		Let $t_{(1,m)}\ge t_{(2,m)}\ge \cdots\ge t_{(N,m)}$ be the sorting of values $\ell_i(m)$, for $i=1,\dots, N$, in decreasing order and set $t_{(N+1,m)}=0$. Similarly, let $t'_{(1,m)}\ge t'_{(2,m)}\ge \cdots\ge t'_{(N,m)}$ be the sorting of values $\ell_i'(m)$, for $i=1,\dots, N$, in decreasing order and set $t'_{(N+1,m)}=0$. Now let $j$ be a fixed rank and let $C$ be the set of players attaining $j$ many largest values of $\ell_i(m)$, i.e., $t_{(j,m)}=\min_{i\in C}\ell_i(m)$. For every $i\in C$, $\ell_i'(m)\ge \ell_i(m)-\varepsilon\ge t_{(j,m)}-\varepsilon$, hence at least $j$ many of the $\ell_i'(m)$ satisfy $\ell_i'(m)\ge t_{(j,m)}-\varepsilon$. This implies that $t'_{(j,m)}\ge t_{(j,m)}-\varepsilon$. Similarly, we also have that $t_{(j,m)}\ge t_{(j,m)}'-\varepsilon$. Therefore, $|t_{(j,m)}-t_{(j,m)}'|\le \varepsilon$ for every $j$.
		
		Let $a=\max(r,k)$. Then, by \Cref{theorem:closed_form_landscapes}, we can write 
		\[\varphi_r(v_{(k,m)}(D))=\sum_{j=a}^Nc_jt_{(j,m)},\]
		where $c_a=\dfrac{1}{a}$, and $c_j=\dfrac{1}{j}-\dfrac{1}{j-1}$ for $j>a$. The coefficients $c_j$ telescope: 
		\[\sum_{j=a}^N|c_j|=\dfrac{1}{a}+\sum_{j=a+1}^N\left(\dfrac{1}{j-1}-\dfrac{1}{j}\right)=\dfrac{2}{a}-\dfrac{1}{N}.\]
		Therefore, we have the desired inequality
		\[|\varphi_r(v_{(k,m)}(D))-\varphi_r(v_{(k,m)}(D'))|=\left|\sum_jc_j(t_{(j)}-t'_{(j)})\right| \le \left(\sum_j |c_j|\right)\varepsilon=\left(\dfrac{2}{a}-\dfrac{1}{N}\right)\varepsilon.\qedhere\]
	\end{proof}
	
	\subsection{Computation}
	Here we give an algorithm that computes the values $\varphi_i(v_{(k,m)})$ from \Cref{theorem:closed_form_landscapes} for a given point $m$.
	
	\begin{algorithm}[H]
	\caption{LandscapeSHAP(m)}
	\label{alg:grid_shap}
	\textbf{Input:} Diagram points with tent values $\ell_i(m)$ \\
	\textbf{Output:} Shapley value $\varphi_i(v_{(k,m)})$ for every player $i$ and layer $k$ at the point $m$.
		\begin{algorithmic}[1]
		\State Sort the active tent values in descending order $t_{(1,m)}\ge \cdots \ge t_{(N,m)}$ and record each player’s rank $r_i$. This has cost $O(N\log N)$.
		\State Build an array $S$ to store the sums from \Cref{theorem:closed_form_landscapes}:
				\[S_{N+1}=0,\quad  S_j=S_{j+1}+\dfrac{t_{(j,m)}-t_{(j+1,m)}}{j},\quad j=N,\dots, 1\]
		\State Return the ranks $\{r_i\}_{i=1}^N$ and the array $S$. Compute $\varphi_i(v_{(k,m)})$ in a single $O(1)$ lookup.
				\[\varphi_i(v_{(k,m)})=S_{\max(r_i,k)}.\]
		\State \textbf{return} the array $\varphi_i(v_{(k,m)})$.
		\end{algorithmic}
	\end{algorithm}
		
	\section{Linear models on landscapes}
	
	\label{section:linear_models}
	
	Here we consider a linear model trained on persistence landscapes. We don't require the model to be well-trained, regularized or pre-processed in any way. We are explaining how any given model makes predictions and where the model comes from or its performance metrics on data do not affect our method. The only requirement is that any transformation of the raw landscape values into a prediction is a fixed affine map. In other words, the coefficients of the model are frozen after training and do not depend on which coalition $S$ is being evaluated.
	
	To have a landscape be an input to such a model it is necessary to discretize the landscape function, that is select a finite set of ``grid" points $M\subset \mathbb{R}$ and consider the restriction of the landscape function
	\[\lambda: \{1,\dots, K\}\times M\to \mathbb{R}.\]
	
	 From now on, in the rest of the manuscript we assume every landscape is a function with a discrete domain such as above. The linear model has the general form
	
	\[f(\lambda)=\sum_{k,m}w_{(k,m)}\lambda(k,m)+b,\]
	
	where $(k,m)\in \{1,\dots, K\}\times M$ ranges over the landscape coordinates, and $w_{(k,m)}$, $b$ are the weights and bias of the model, respectively. The bias $b$ does not affect the Shapley values, since it shifts $v(S)$ uniformly for any coalition $S$ and it cancels out in every marginal contribution.	 Let $\Lambda$ be the feature map from persistence diagrams to landscapes. Let $f$ be a linear model that takes as input vectors of dimension $K\times |M|$. We define the cooperative game on coalitions of persistence diagram points $S$ by
		\[v_f(S)\od f(\Lambda(S)),\quad S\subset \{1,\dots, N\}.\]

		We wish to compute the Shapley values of the game $v_f$, i.e. $\varphi_i(v_f)$.
	
	\subsection{Exactness and uniqueness}
	
	Note that the $(k,m)$ coordinate of the landscape $\Lambda(S)$ is simply the value of the landscape game $v_{(k,m)}(S)$, $\Lambda(S)_{(k,m)}=v_{(k,m)}(S)$.
	Because the Shapley values are linear in the characteristic functions and by \Cref{theorem:closed_form_landscapes}, we thus immediately have the following.
	
	\begin{theorem}
	\label{theorem:closed_form_linear_model}
	Let $f$ be a linear model. The Shapley value of player $i$ in the game $v_f$ is
	\[\varphi_i (v_f)=\sum_{k,m}w_{(k,m)}\varphi_i(v_{(k,m)}).\]
	\end{theorem}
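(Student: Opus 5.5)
The plan is to express the game $v_f$ as a finite linear combination of the landscape games $v_{(k,m)}$, plus a constant, and then apply the Linearity axiom satisfied by the Shapley value (\Cref{def:shap_value}).

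\textbf{Decomposing the game.} For any coalition $S\subset\{1,\dots,N\}$ we have $\Lambda(S)_{(k,m)}=v_{(k,m)}(S)$, so
\[v_f(S)=f(\Lambda(S))=\sum_{k,m}w_{(k,m)}v_{(k,m)}(S)+b.\]
The constant term needs care, since a characteristic function must satisfy $v(\varnothing)=0$. The landscape of the empty diagram is identically $0$, so $v_f(\varnothing)=b$. I would therefore work with the normalized game $\tilde v_f(S)\od v_f(S)-v_f(\varnothing)=\sum_{k,m}w_{(k,m)}v_{(k,m)}(S)$.

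\textbf{The constant does not matter.} The marginal contributions $v_f(S\cup\{i\})-v_f(S)$ coincide with those of $\tilde v_f$. Hence, by \Cref{def:shap_value}, $\varphi_i(v_f)=\varphi_i(\tilde v_f)$. This matches the paper's remark that the bias cancels in every marginal contribution.

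\textbf{Applying linearity.} The Shapley value formula in \Cref{def:shap_value} is a fixed linear functional of the characteristic function: each $\varphi_i(v)$ is a fixed weighted sum of the differences $v(S\cup\{i\})-v(S)$. It is therefore additive and homogeneous under real scalar multiples. Note that the stated Linearity axiom only asserts additivity, but homogeneity is immediate from the explicit formula. Since $(k,m)$ ranges over the finite set $\{1,\dots,K\}\times M$, this gives
\[\varphi_i(\tilde v_f)=\sum_{k,m}w_{(k,m)}\varphi_i(v_{(k,m)}).\]
This is the claimed identity. Each $\varphi_i(v_{(k,m)})$ is then given explicitly by \Cref{theorem:closed_form_landscapes}.

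\textbf{Main obstacle.} The argument is essentially routine. The only delicate points are bookkeeping: the bias and the $v(\varnothing)=0$ normalization, and the fact that scalar homogeneity (including negative weights $w_{(k,m)}$) comes from the explicit formula rather than from the additivity axiom alone.
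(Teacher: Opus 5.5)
Your proposal is correct and is essentially the paper's argument: use $\Lambda(S)_{(k,m)}=v_{(k,m)}(S)$ to write $v_f(S)=\sum_{k,m}w_{(k,m)}v_{(k,m)}(S)+b$, note that the bias cancels in every marginal contribution, and apply linearity of the Shapley value. Your extra bookkeeping makes explicit what the paper leaves implicit: you normalize so the empty coalition has value $0$, and you get homogeneity for arbitrary real (including negative) weights $w_{(k,m)}$ from the explicit formula, since the additivity axiom alone does not give it.
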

	
	Additionally, from \Cref{theorem:shap_unique} we also have the following uniqueness result.

	\begin{corollary}
	\label{corollary:uniqueness_landscape_shap}
	For a linear model $f$ and the characteristic function $v_f$, any persistence diagram point credit attribution rule $\psi(v_f)$ satisfying Efficiency, Symmetry, Null player and Linearity for $v_f$ must equal $\varphi(v_f)$, for every diagram point.
	\end{corollary}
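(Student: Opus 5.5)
The statement to prove is \Cref{corollary:uniqueness_landscape_shap}. I will deduce it from \Cref{theorem:shap_unique} applied to the single cooperative game $(\{1,\dots,N\},v_f)$.

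The first step is to check that $v_f$ is a legitimate game in characteristic function form. The definition requires $v_f(\varnothing)=0$. However, $v_f(\varnothing)=f(\Lambda(\varnothing))=f(0)=b$, since every landscape game gives $v_{(k,m)}(\varnothing)=0$. So the bias must be handled. I will replace $v_f$ by the normalized game $\tilde v_f(S)\od v_f(S)-b=\sum_{k,m}w_{(k,m)}v_{(k,m)}(S)$, which satisfies $\tilde v_f(\varnothing)=0$. Marginal contributions $v_f(S\cup\{i\})-v_f(S)$ are unchanged by this shift. Hence the Symmetry and Null player hypotheses, and the Shapley formula of \Cref{def:shap_value}, are identical for $v_f$ and $\tilde v_f$.

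Efficiency should be read as $\sum_i\psi_i=v_f(N)-v_f(\varnothing)$, the standard convention for games with nonzero empty value. I will state this convention explicitly; it is the only point needing care.

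With normalization in place, the argument runs as follows.
\begin{enumerate}[left=0pt]
\item Take any rule $\psi$ satisfying the four axioms on the class of games containing $\tilde v_f$.
\item \Cref{theorem:shap_unique} forces $\psi=\varphi$ on that class, so in particular $\psi(\tilde v_f)=\varphi(\tilde v_f)$.
\item Since $\varphi(v_f)=\varphi(\tilde v_f)$, this gives $\psi(v_f)=\varphi(v_f)$ for every diagram point $i$.
\item Combining with \Cref{theorem:closed_form_linear_model}, the common value equals $\sum_{k,m}w_{(k,m)}\varphi_i(v_{(k,m)})$.
\end{enumerate}

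The main obstacle is a logical one. Linearity is an axiom relating different games, so uniqueness cannot hold for a single fixed game $v_f$ in isolation. I will therefore interpret ``for $v_f$'' as saying that $\psi$ is a fair rule on all games over the player set $\{1,\dots,N\}$, and is then evaluated at $v_f$. This is exactly the setting of \Cref{theorem:shap_unique}, whose proof decomposes a game into unanimity games $u_T$ and uses Symmetry, Null player and Efficiency to pin down $\psi(c\,u_T)$.

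Alternatively, one could restrict attention to the linear span of the games $\{v_{(k,m)}\}$. I would not pursue that: uniqueness would then require the unanimity games to lie in that span, which need not hold. So I will commit to the global-rule interpretation and conclude directly from \Cref{theorem:shap_unique}.
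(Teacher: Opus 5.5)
Your proposal is correct and takes the same route as the paper, which obtains the corollary by a direct appeal to \Cref{theorem:shap_unique}. Your normalization $\tilde v_f=v_f-b$ and your reading of Linearity as a condition on a rule defined on all games over $\{1,\dots,N\}$ make explicit what the paper leaves implicit; it only notes that the bias cancels in marginal contributions and that $v_f(\varnothing)=0$ is assumed. The one step you pass over, $\psi(v_f)=\psi(\tilde v_f)$, follows at once, either by taking $\tilde v_f$ as the game being evaluated or from Null player on the constant game $S\mapsto b$ together with Linearity.
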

		
	\begin{remark}[Choice of the baseline]
	\label{remark:choice_of_baseline}
	This definition of $v_f$ assumes $v(\varnothing)=0$. This is a different baseline from the standard convention of comparing against the dataset average features \cite{NIPS2017_8a20a862}. For crediting persistent homology classes, the empty persistence diagram baseline is arguably the more natural one since  an empty diagram truly has no topology. If a dataset average baseline is desired instead, the game must be redefined relative to that reference, and the argument above no longer collapses to a
closed form as directly. This is a choice a TDA practitioner must make. 
	\end{remark}
	
	There is also a stability theorem for Shapley values of games $v_f$ for any model $f$, that we prove later (\Cref{theorem:stability_nonlinear_model}).
	
	\subsection{Computation}
	Note that for a point $m$, the players with $\ell_i(m)=0$ are always null in the game $v_f$ so they can be discarded. We give an algorithm for computing the Shapley values from \Cref{theorem:closed_form_linear_model}.
	
	\begin{algorithm}[H]
	\caption{LandscapeSHAP for linear models}
	\label{alg:linear_model}
	\textbf{Input:} Diagram points with tent values $\ell_i(m)$, linear model $f$ with model weights $w_{(k,m)}$ \\
	\textbf{Output:} Shapley value $\varphi_i(v_f)$ for every player (persistence diagram point) $i$.
		\begin{algorithmic}
		\State \text{Initialize} $\varphi_i\leftarrow 0$ for all $i$.
		\State \textbf{for} each grid point $m=1,\dots, M$: 
		\begin{itemize}
		\item[(a)] restrict to the active set $A=\{i\,|\, \ell_i(m)>0\}$ and run \text{LandscapeSHAP}(m) on it, obtaining ranks $(r_i)_{i\in A}$ and suffix array $S$
		\item[(b)] \textbf{for} each layer $k=1,\dots, K$, \textbf{for} each $i\in A$:
				\[\varphi_i\leftarrow \varphi_i+w_{(k,m)}\cdot S_{\max(r_i,k)}\]
		\end{itemize}
		\State \textbf{return} the array $\varphi$.
		\end{algorithmic}
	\end{algorithm}
	
	For $N$ on the order of a few hundred points, $K=40$ layers and $|M|=100$ grid points, this is on the order of $|M|\cdot N\log N$ for sorting ($\sim 10^5$ operations) plus $|M|\cdot N\cdot K$ for the lookups ($\sim 10^6$ operations). This is trivial for a modern machine.
	
	\subsection{Credit allocation for an averaged landscape}
	\label{section:average_landscapes}
	
	The Strong Law of Large Numbers for persistence landscapes (\cite[Theorem 9]{bubenik2015statistical}) makes it so that the average of landscapes over a random process is usually computed in applications. Let $D_1,\dots, D_n$ be persistence diagrams (e.g., obtained from point clouds that were generated i.i.d. as random variables) with corresponding landscapes $\lambda_1,\dots, \lambda_n$. Then, we can train or evaluate a model $f$ on a linear combination
	\[\overline{\lambda}=\sum_{j=1}^n\alpha_j \lambda_j,\] 
	where the choice  $\alpha_j=\dfrac{1}{n}$ gives the average landscape.  A natural question to ask is whether the persistence diagram point-level attribution of \Cref{theorem:closed_form_linear_model} still works, namely, can the credit allocation for the model's prediction on $\overline{\lambda}$ be pulled back to the individual points within each of the $n$ underlying persistence diagrams?
	
	Consider the disjoint union of all the players, i.e. all persistence diagram points), $N=D_1\sqcup \cdots \sqcup D_n$ and the cooperative game 
	\[v_f(S)=f\left(\sum_{j=1}^n\alpha_i \Lambda(S\cap D_j)\right),\quad S\subset N.\]
	In other words, the value of the coalition $S$ is the model's prediction on the weighted sum of the landscapes obtained from the coalitions $S\cap D_j$, for each persistence diagram $D_j$. Then we have the following
	
	\begin{theorem}
		Let $f$ be a linear model on landscapes. For every $j$ and every player $i\in D_j$, 
		\[\varphi_i(v_f)=\alpha_j\cdot \varphi_i(v_f;D_j),\]
		where $\varphi_i(v_f;D_j)$ is the Shapley value of player $i$ from \Cref{theorem:closed_form_linear_model}, computed for the persistence landscape of the diagram $D_j$ in isolation, using the same weights $w_{(k,m)}$. In particular, no cross-diagram interaction terms appear.
	\end{theorem}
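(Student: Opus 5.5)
The plan is to decompose $v_f$ into a sum of games, one for each diagram, and then use linearity and null players. Write $f(\lambda)=\sum_{k,m}w_{(k,m)}\lambda(k,m)+b$. The bias is constant across coalitions, so it cancels in every marginal contribution, and we may take $b=0$; this normalization also gives $v_f(\varnothing)=0$. By linearity of $f$,
\[
v_f(S)=\sum_{j=1}^n\alpha_j\, f\bigl(\Lambda(S\cap D_j)\bigr)=\sum_{j=1}^n \alpha_j\, u_j(S),\qquad u_j(S)\od f\bigl(\Lambda(S\cap D_j)\bigr),
\]
where each $u_j$ is a game on the full player set $N=D_1\sqcup\cdots\sqcup D_n$. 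By the Linearity axiom (including scalar homogeneity, which holds directly from the Shapley formula in \Cref{def:shap_value}), we get $\varphi_i(v_f)=\sum_j\alpha_j\varphi_i(u_j)$.

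Next, for a player $i\notin D_j$ we have $(S\cup\{i\})\cap D_j=S\cap D_j$, so $i$ is null in $u_j$ and $\varphi_i(u_j)=0$. For $i\in D_j$, we must show that $\varphi_i(u_j)$, computed on the large player set $N$, equals $\varphi_i(v_f;D_j)$, the Shapley value in the game on $D_j$ alone. Here I will use the permutation form, \Cref{def:shap_value_permutation}. Pick a uniformly random permutation of $N$. Then $u_j(P_i^\pi\cup\{i\})-u_j(P_i^\pi)$ depends only on $P_i^\pi\cap D_j$, which is the set of predecessors of $i$ in the induced ordering of $D_j$. That induced ordering is uniformly distributed over the orderings of $D_j$. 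So the average of the marginal contributions equals the Shapley value of the restricted game $S'\mapsto f(\Lambda(S'))$ on $D_j$, and by \Cref{theorem:closed_form_linear_model} this is $\varphi_i(v_f;D_j)$. This is the standard fact that adding null players leaves the Shapley value unchanged.

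Combining the two steps, player $i\in D_j$ gets $\alpha_j\varphi_i(v_f;D_j)$, and nothing else contributes. In particular, no cross-diagram terms appear.

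The main obstacle is the dummy-extension invariance in the second step, where the uniformity of the induced ordering must be checked with some care. I would state it explicitly as a short lemma and argue by symmetry: every ordering of $D_j$ arises from exactly $|N|!/|D_j|!$ permutations of $N$. As an alternative check, the rest of the argument could be reduced to the closed form by expanding $u_j=\sum_{k,m}w_{(k,m)}v_{(k,m)}(\,\cdot\cap D_j)$ and applying the threshold decomposition from the proof of \Cref{theorem:closed_form_landscapes}. In that decomposition, players outside $D_j$ are small (null) players, so $B(\tau)$ counts only the points of $D_j$, and the same formula results.
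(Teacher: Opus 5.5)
Your proposal is correct and follows essentially the same route as the paper: drop the bias, decompose $v_f$ into per-diagram games that depend on $S$ only through $S\cap D_j$, observe that players outside $D_j$ are null in the $j$-th game, and apply linearity. The one difference is that you explicitly prove, via the uniform induced ordering on $D_j$, that the Shapley value of the $j$-th game over the full player set $N$ equals that of the isolated game on $D_j$; the paper's closing ``Therefore'' leaves this step implicit, so your version is slightly more complete.
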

	
	\begin{proof}
		Because $f$ is linear, we have
		\[v_f(S)=\sum_{j=1}^n\alpha_j f(\Lambda(S\cap D_j))=\sum_{j=1}^n v_{(f,j)}(S)+\text{const},\quad v_{(f,j)}(S)=\alpha_jf(\Lambda(S\cap D_j)),\]
		Note that each game $v_{(f,j)}$ depends on $S$ only through the restriction to $S\cap D_j$. Let $j$ be fixed. For every player $i\not \in D_j$ and every coalition $S\subset N\setminus \{i\}$, we have $(S\cup\{i\})\cap D_j=S\cap D_j$. Therefore, $v_{(f,j)}(S\cup \{i\})=v_{(f,j)}(S)$ and thus every player outside $D_j$ is null in the game $v_{(f,j)}$. Furthermore, the additive constant does not affect the Shapley values. By linearity, we have $\varphi(v_f)=\sum_{j=1}^n\varphi(v_{(f,j)})$ so we only need to compute $\varphi_i(v_{(f,j)})$ for $i\in D_j$.
	Furthermore, for $i\in D_j$, we have 
	\[v_{(f,j)}(S\cup\{i\})-v_{(f,j)}(S)=v_{(f,j)}((S\cap D_j)\cup \{i\})-v_{(f,j)}(S\cap D_j)=\alpha_j(f(\Lambda((S\cap D_j)\cup \{i\}))-f(\Lambda(S\cap D_j))).\]
	Therefore, $\varphi_i(v_{(f,j)})=\alpha_j\varphi_i(v_f;D_j)$.
	\end{proof}
	
	\begin{remark}
		The result above depends entirely on $f$ being linear. For a nonlinear $f$, $v_f(S)$ generally cannot be written as a sum of games, each depending on only one diagram's points. In this case, $\varphi_i(v_f)$ must be estimated over the total player set $N$. This estimation will have much higher exponential cost in $|N|=\sum_{j}|D_j|$.
	\end{remark}

	\section{Nonlinear models on persistence landscapes}
	\label{section:nonlinear_models}
		
		Let $\Lambda$ as before be the feature map from persistence diagrams to landscapes. Let $f$ be a nonlinear model that takes as input vectors  of dimension $K\times |M|$. We define the cooperative game on coalitions of persistence diagram points $S$ by
		\[v_f(S)\od f(\Lambda(S)),\quad S\subset \{1,\dots, N\}.\]

		Note that the $(k,m)$ coordinate of the landscape $\Lambda(S)$ is simply the landscape game $v_{(k,m)}(S)$, $\Lambda(S)_{(k,m)}=v_{(k,m)}(S)$. We wish to compute the Shapley values of the game $v_f$, $\varphi_i(v_f)$. They are defined by either of the averaging formulas (\Cref{def:shap_value,def:shap_value_permutation}).
		
		\subsection{Uniqueness}	

		The uniqueness result in (\Cref{corollary:uniqueness_landscape_shap}) does not depend on the linearity of a model, and by \Cref{theorem:shap_unique} we also have the following:
		
		\begin{corollary}
		\label{corollary:fairness_axioms}
			For any model $f$ there is exactly one credit allocation rule $\varphi(v_f)$ satisfying Efficiency, Symmetry, Linearity and Null player for the game $v_f$.
		\end{corollary}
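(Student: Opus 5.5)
The plan is to reduce the statement directly to \Cref{theorem:shap_unique}, after checking that $v_f$ is a legitimate cooperative game in characteristic function form on the player set $\{1,\dots,N\}$.

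The first step is normalisation. By definition $v_f(S)=f(\Lambda(S))$ is a real-valued function on $2^{\{1,\dots,N\}}$, but in general $v_f(\varnothing)=f(\Lambda(\varnothing))=f(0)\neq 0$. For example, a bias term or any nonzero value of the model at the empty landscape breaks the normalisation required in the definition of a cooperative game. So I would replace $v_f$ by
\[\tilde v_f(S)\od f(\Lambda(S))-f(\Lambda(\varnothing)),\]
exactly as the bias was handled in \Cref{section:linear_models}. This is a bona fide game with $\tilde v_f(\varnothing)=0$. Every marginal contribution $v_f(S\cup\{i\})-v_f(S)$ is unchanged by the shift, so the Shapley value from \Cref{def:shap_value} is the same for $v_f$ and $\tilde v_f$. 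The hypotheses of Symmetry and Null player are also phrased purely through marginal differences or equalities of $v_f(S\cup\{i\})$ and $v_f(S\cup\{j\})$, so they too are unaffected. Efficiency should then be read as $\sum_i\psi_i=v_f(N)-v_f(\varnothing)$, consistent with the convention of \Cref{remark:choice_of_baseline}.

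With the game normalised, I would apply \Cref{theorem:shap_unique} to $(\{1,\dots,N\},\tilde v_f)$ to conclude two things. \emph{Existence}: the Shapley value $\varphi(\tilde v_f)=\varphi(v_f)$ satisfies the four axioms. \emph{Uniqueness}: any rule $\psi$ satisfying them coincides with $\varphi$.

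The only genuine subtlety is that Linearity is an axiom about a rule defined on the whole space of games on $N$ players, not on the single game $v_f$. So "a rule for the game $v_f$" must be understood as a rule defined on all games on the player set, evaluated at $v_f$. Shapley's argument decomposes $\tilde v_f$ into the unanimity basis $\tilde v_f=\sum_{T\neq\varnothing}c_T u_T$, where the coefficients $c_T$ are given by Möbius inversion. Symmetry, Null player and Efficiency force $\psi_i(c_Tu_T)=c_T/|T|$ for $i\in T$ and $0$ otherwise, and additivity then pins down $\psi(\tilde v_f)$.

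I expect that interpretive point to be the main obstacle: making explicit that the uniqueness is with respect to rules on the full game space. I would state it as such and note that no property of $f$ (linearity, Lipschitz continuity) is used anywhere. That is precisely why the corollary holds for arbitrary models.
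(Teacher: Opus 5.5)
Your proposal is correct and follows the same route as the paper, which proves this corollary by directly invoking \Cref{theorem:shap_unique} for the game $v_f$ and noting that linearity of $f$ plays no role. Your normalisation $\tilde v_f=v_f-f(\Lambda(\varnothing))$ and your point that uniqueness concerns rules defined on the whole space of games on $\{1,\dots,N\}$ make explicit what the paper leaves implicit, via its bias-cancellation remark and \Cref{remark:choice_of_baseline}.
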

		
		What the linearity of a model gets us is an exact closed form solution (\Cref{theorem:closed_form_linear_model}) and for a general nonlinear model no such exact formula is available. Furthermore, just like in \Cref{remark:choice_of_baseline}, the uniqueness in \Cref{corollary:fairness_axioms} is relative to a baseline.
	
	\subsection{Stability}
	
	Here we show Stability for Shapley values of the game $v_f$.  Let $D,D'$ be two diagrams indexed by the same set $\{1,\dots ,N\}$ via a fixed bottleneck distance optimal matching between them. For simplicity, let point $i$ of $D$ and point $i$ of $D'$ denote the two ends of a single matched pair, $(b_i,d_i) \in D$ and $(b_i',d_i')\in D'$, not merely the $i$-th point of each diagram under some unrelated ordering. Let $\ell_i(m)$ and $\ell'_i(m)$ be the tent functions of the point $i$ in $D$ and $D'$, respectively.  Let $\varphi_i(v_f;D)$ and $\varphi_i(v_f;D')$ be the Shapley values of a player $i$, for the games $v_f$ constructed from the diagrams $D$ and $D'$, respectively. Then we have the following result.
	
	\begin{theorem}
	\label{theorem:stability_nonlinear_model}
	Let $D,D'$ be persistence diagrams as above, and let $f$ be $L_f$-Lipschitz with respect to the $1$-norm on the flattened $K\times |M|$ landscape vector. Then, for every player $i$ we have 
	\[|\varphi_i(v_f;D)-\varphi_i(v_f;D')|\le 2L_fK|M|\delta,\]
	where $\delta=d_B(D,D')$.
	\end{theorem}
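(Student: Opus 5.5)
The plan is to use the permutation form of the Shapley value (\Cref{def:shap_value_permutation}) and compare the two games coalition by coalition under the fixed matching. Since $D$ and $D'$ share the index set $\{1,\dots,N\}$, the same permutation $\pi$ gives the same predecessor set $P_i^{\pi}$ in both games. Hence
\[\varphi_i(v_f;D)-\varphi_i(v_f;D')=\frac{1}{N!}\sum_{\pi}\Big[\big(v_f^D(P_i^\pi\cup\{i\})-v_f^{D'}(P_i^\pi\cup\{i\})\big)-\big(v_f^D(P_i^\pi)-v_f^{D'}(P_i^\pi)\big)\Big].\]
By the triangle inequality it therefore suffices to show that for every coalition $S$,
\[|v_f^D(S)-v_f^{D'}(S)|\le L_fK|M|\delta.\]
Each permutation term then contributes at most twice this, and averaging over permutations keeps the bound $2L_fK|M|\delta$.

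To prove the coalition bound, I use the $L_f$-Lipschitz property of $f$ with respect to the $1$-norm:
\[|v_f^D(S)-v_f^{D'}(S)|\le L_f\sum_{k\le K,\,m\in M}|v_{(k,m)}^D(S)-v_{(k,m)}^{D'}(S)|.\]
This leaves $K|M|$ terms, and I bound each one by $\delta$. This is exactly the content of the first two steps in the proof of \Cref{theorem:stability_for_landscape_games}, applied to the subdiagrams $S\subset D$ and $S\subset D'$, which are matched pointwise.

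\begin{itemize}
\item First, $|\ell_i(m)-\ell_i'(m)|\le \max(|b_i-b_i'|,|d_i-d_i'|)\le\delta$, because $\min$ and the clipped linear maps are $1$-Lipschitz.
\item Second, the order statistics of two matched lists that differ entrywise by at most $\delta$ differ by at most $\delta$ at every rank.
\end{itemize}

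For $|S|<k$ both landscape values are $0$. Otherwise $v_{(k,m)}(S)$ is the $k$-th order statistic of $\{\ell_i(m)\}_{i\in S}$, so $|v^D_{(k,m)}(S)-v^{D'}_{(k,m)}(S)|\le\delta$. Summing over the $K|M|$ coordinates gives the coalition bound.

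The main obstacle is justifying that the same player indexing and the same padding make sense in both games. Padding with diagonal points may introduce players whose matched partner is a genuine point in the other diagram. Under a $d_B$-optimal matching, such a partner is within $\delta$ of the diagonal, so its tent function has height at most $\delta$. A padded diagonal point has tent function identically zero, so the pointwise bound $|\ell_i-\ell_i'|\le\delta$ still holds for these pairs. I would verify this explicitly; if the optimal matching is only approached as an infimum, I would apply the argument with any $\delta'>\delta$ and let $\delta'\downarrow\delta$. The factor $2$ cannot be absorbed with this method because the $i$-inclusive and $i$-exclusive coalitions are bounded separately. That matches the stated constant.
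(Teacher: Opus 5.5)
Your proposal is correct and is essentially the paper's proof: you bound every landscape coordinate of every coalition by $\delta$ using the pointwise tent estimate and the rank-wise order-statistic estimate, then sum over the $K|M|$ coordinates and apply the $L_f$-Lipschitz property to get $|v_f(S;D)-v_f(S;D')|\le L_fK|M|\delta$, and finally obtain the factor $2$ from the triangle inequality on marginal contributions before averaging. The only differences are cosmetic: you average over permutations rather than subsets, and you explicitly check that diagonal-padded pairs satisfy $|\ell_i-\ell_i'|\le\delta$ (the partner's tent has height $(d-b)/2\le\delta$), which the paper handles only in the discussion preceding \Cref{theorem:stability_for_landscape_games}.
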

	
	\begin{proof}
	Just like in the proof of \Cref{theorem:stability_for_landscape_games} we have $|\ell_i(m; D)-\ell_i(m; D')|\le  \delta$ pointwise, and for every matched player $i$ and also we have $|t_{(j,m)}-t'_{(j,m)}|\le \delta$, where $t_{(j,m)}$ and $t'_{(j,m)}$ are the $j$-ranked (in descending order) tent function values of diagrams $D$ and $D'$ at the point $m$, respectively. Restricting these to $i\in S$ for an arbitrary coalition $S \subset \{1,\dots ,N\}$ yields
 \[|\Lambda(D(S))(k,m)-\Lambda(D'(S))(k,m)| \le \delta,\] 
 for every $S$, $k$, $m$. Summing this coordinate-wise yields 
\[||\Lambda(D(S))-\Lambda(D'(S))||_1 \le K|M|\delta,\]
for every coalition $S$. Since $f$ is $L_f$-Lipschitz, we have
\[\left |v_f(S;D)-v_f(S;D')\right|=\left |f(\Lambda(D(S)))-f(\Lambda(D'(S)))\right | \le L_fK|M|\delta,\]
for every coalition $S$. Consider a player $i$. Then, by the triangle inequality we have
\begin{align*}
\left | (v_f(S\cup\{i\};D)-v_f(S;D))-(v_f(S\cup\{i\};D')-v_f(S;D'))\right|\le\\
 \left |v_f(S\cup\{i\};D)-v_f(S\cup\{i\};D')\right|+\left |v_f(S;D)-v_f(S;D')\right| \le 2L_fK|M|\delta.
 \end{align*}
 Therefore, averaging the marginal contributions over all possible coalitions $S$ we have
 \[ |\varphi_i(v_f;D)-\varphi_i(v_f;D')|\le 2L_fK|M|\delta.\qedhere\]
	\end{proof}
	
	\subsection{Approximation}
	
	The game $v_f$ must be evaluated for every coalition $S$ and the Shapley values are then computed exactly only by averaging marginal contributions over all $2^N$ coalitions, which is not computable beyond small $N$. A common approach is to instead use Monte Carlo sampling of coalitions. This gives an approximation that converges to the exact Shapley values as the number of samples grows. Naive permutation sampling recomputes $\Lambda(S)$ from scratch at every step, of every permutation, and sorting the whole coalition $S$ at each of the grid points in $M$ costs $O(|S|\log |S|)$, and summing this over the $N$ steps of one permutation costs $O(N^2\cdot \log (N\cdot |M|))$.  
	
	\begin{algorithm}[H]
	\caption{Efficient Monte Carlo point-level attribution (any model)}
	\label{alg:nonlinear_model}
	\textbf{Input:} Diagram points with tent values $\ell_i(m)$, a model $f$, a number of permutations $n$ \\
	\textbf{Output:} An estimate $\hat{\varphi}_i(v_f)$ for every player $i$.
		\begin{algorithmic}
		\State \text{Initialize} $\hat{\varphi}_i(v_f)\leftarrow 0$ for all $i$.
		\State \textbf{for} $j\in 1,\dots, n$: 
		\State (a) sample uniformly a permutation $\pi$ of $\{1,\dots, N\}$; for each grid point $m$ initialize an empty array $B_m$ of size $K$; set $v_{\text{old}}\leftarrow f(\varnothing)$.
		\State (b) \textbf{for} each player $i$ in the order given by $\pi$:
		\begin{enumerate}
			\item[(i)] for each grid point $m$: append $\ell_i(m)$ to $B_m$ (size $K \to K+1$), re-sort the array, and keep only the top $K$ entries (cost $O(K\log K)$, independent
of how many points have joined so far)
			\item[(ii)] assemble the resulting $K\times |M|$ landscape matrix from the arrays $B_1,\dots ,B_M$ and evaluate $v_{\text{new}}\leftarrow f(\text{landscape})$;
			\item[(iii)] $\hat{\varphi}_i\leftarrow \hat{\varphi}_i+(v_{\text{new}}-v_{\text{old}}$); $v_{\text{old}}\leftarrow v_{\text{new}}$.  
		\end{enumerate}
		\State \textbf{return} $\hat{\varphi}_i(v_f)\leftarrow \dfrac{\hat{\varphi}_i}{n}$.
		\end{algorithmic}
	\end{algorithm}
	
	The only change from the naive sampling is step $(b)(i)$. Maintaining each grid point’s top $K$ tent values under insertion costs $O(K\log K)$ per grid point  instead of
	 $O(|S|\log |S|)$. Therefore, the cost of one permutation is $O(N\cdot K\log (K\cdot |M|))$, which is linear rather than quadratic in $N$, since $K$ is chosen independently of the
	diagram size. This speeds up the featurization step $S \mapsto \Lambda(S)$ itself and so benefits step $(b)(ii)$ regardless of the model $f$. Evaluating $f$ on the resulting $K\times |M|$ matrix landscape is a separate, model-dependent cost on top of it. 	 
	
	 \subsection{Rate of convergence}
	 
	 \Cref{alg:nonlinear_model} is unbiased for $\varphi_i(f)$ regardless of $n$, but its variance, and hence the number of permutations needed for a target accuracy, is not yet known. A priori the accuracy could degrade as the diagram grows since a coalition $S$ ranges over an exponentially large space. Thus, one might expect the marginal contributions of a single player to become more erratic as $N$ grows. For the landscape featurization specifically, this does not happen. We show that inserting one player into a coalition can only move the landscape by an amount bounded by that point’s own tent function, no matter how large the coalition already is.
	 
	 \begin{lemma}
	 \label{lemma:bound_on_insertion_of_player}
	 	Let $S\subset \{1,\dots, N\}$ be a coalition of players and let $i_0\not \in S$. Let $t_{(1,m)}\ge t_{(2,m)}\ge \cdots \ge t_{(n,m)}$ be a sorting of $\ell_i(m)$ in decreasing order, for $i\in S$, where $n=|S|$. Then
		\[\sum_{k=1}^K |v_{(k,m)}(S\cup \{i_0\})-v_{(k,m)}(S)|\le \ell_{i_0}(m),\]
		with equality when $K\ge n+1$.
	 \end{lemma}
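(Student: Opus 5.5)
Fix a grid point $m$ and write $x=\ell_{i_0}(m)\ge 0$. We compare the sorted list $t_{(1,m)}\ge\cdots\ge t_{(n,m)}$ of the coalition $S$ with the sorted list of $S\cup\{i_0\}$, padding both with zeros beyond their lengths. This is consistent with the definition of $v_{(k,m)}$, which returns $0$ when there are fewer than $k$ players; since tent values are nonnegative, padding by zeros does not change any order statistic. So set $t_{(j,m)}=0$ for $j>n$.

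Let $p$ be the insertion position, i.e.\ the smallest index with $t_{(p,m)}< x$. If several values are tied with $x$, any consistent choice works, because the sorted multiset is the same either way. The sorted list $u$ of $S\cup\{i_0\}$ is then
\[
u_k=t_{(k,m)} \quad (k<p),\qquad u_p=x,\qquad u_k=t_{(k-1,m)} \quad (k>p).
\]
By the monotonicity proposition, $v_{(k,m)}(S\cup\{i_0\})-v_{(k,m)}(S)=u_k-t_{(k,m)}\ge 0$, so the absolute values can be dropped. These differences are:
\begin{itemize}
\item $0$ for $k<p$;
\item $x-t_{(p,m)}$ for $k=p$;
\item $t_{(k-1,m)}-t_{(k,m)}$ for $k>p$.
\end{itemize}

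Summing over $k=1,\dots,K$, the terms with $k>p$ telescope, and for $K\ge p$ we get
\[
\sum_{k=1}^K\bigl(u_k-t_{(k,m)}\bigr)=x-t_{(p,m)}+t_{(p,m)}-t_{(K,m)}=x-t_{(K,m)}\le x .
\]
The inequality holds because $t_{(K,m)}\ge 0$. If $K<p$, every term in the sum vanishes and the bound $0\le x$ is trivial. So in all cases the sum is at most $\ell_{i_0}(m)$.

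For the equality clause, suppose $K\ge n+1$. Then $t_{(K,m)}=0$ by the zero padding. Also $p\le n+1\le K$, provided we take $p=n+1$ when $x\le t_{(n,m)}$ or $x=0$ (if $x=0$ both sides are zero anyway). Hence the sum equals $x=\ell_{i_0}(m)$ exactly.

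The main care needed is the bookkeeping at the boundary. This means handling ties, the case where the zero padding overlaps with $x=0$, and checking that the "$k$-th largest, else $0$" convention really matches the zero-padded sorted list. These points are routine but are where off-by-one errors could arise.
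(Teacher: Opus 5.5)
Your proof is correct and takes essentially the same route as the paper: find the rank at which $\ell_{i_0}(m)$ is inserted, note that the layers above it are unchanged, the layer at that rank gains $\ell_{i_0}(m)-t_{(p,m)}$, and each lower layer gains a consecutive gap, then telescope. Your zero-padding bookkeeping is slightly more explicit than the paper's, since it gives the exact value $\ell_{i_0}(m)-t_{(K,m)}$ of the truncated sum for $K\ge p$. That makes both the inequality and the equality case $K\ge n+1$ immediate.
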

	 
	 \begin{proof}
	 	Let $r\in \{1,\dots, n+1\}$ be the rank $\ell_{i_0}(m)$ assumes once inserted into the coalition $S$. Then $t_{(r-1,m)}\ge \ell_{i_0}(m) \ge t_{(r,m)}$. In particular, inserting $i_0$ leaves the layers $k<r$ unchanged. More specifically, layer $r$'s new value is $\ell_{i_0}(m)$ instead of the old $t_{(r,m)}$. Because the values are sorted in decreasing order we have that 
		\[\ell_{i_0}(m)-t_{(r,m)}\ge 0, \quad t_{(j-1,m)}-t_{(j,m)}\ge 0, r<j\le n.\]
		Therefore, since $t_{(j-1,m)}-t_{(j,m)}$ fit into a telescoping sum, we have 
		\[(\ell_{i_0}(m)-t_{(r,m)})+\sum_{j=r+1}^n t_{(j-1,m)}-t_{(j,m)}+t_{(n,m)}=(\ell_{i_0}(m)-t_{(r,m)})+(t_{(r,m)}-t_{(n,m)})+t_{(n,m)}=\ell_{i_0}(m),\]
		for $K\ge n+1$. If $K<n+1$, the sum above is truncated below its last nonnegative terms are added, so it only decrease, giving an inequality in this case.
	 \end{proof}
	 
	 Summing over all the pairs $(k,m)$ in \Cref{lemma:bound_on_insertion_of_player}, for $k=1,\dots, K$ and $m\in M$ we immediately have that 
	 
	 \[||\Lambda(S\cup\{i_0\})-\Lambda(S)||_1\le \sum_{m=1}^M \ell_{i_0}(m)=||\ell_{i_0}||_1.\]
	 Note that the right hand side of this inequality depends only on the player $i_0$’s tent function, and not on $N$, not on $K$, and not on
which coalition $S$ the player $i_0$ is being inserted into. Therefore, we immediately have the following corollary.
	 
	 \begin{corollary}
	 \label{corollary:bound_on_insertion_of_player}
	 	Suppose that $f$ is $L_f$-Lipschitz with respect to the $||\cdot||_1$ norm on persistence landscapes ($K\times |M|$) vectors, i.e., $|f(x)-f(y)|\le L_f ||x-y||_1$. Then, for every permutation $\pi$ and every player $i$,
		\[|v_f(P_i^{\pi}\cup\{i\})-v_f(P_i^{\pi})|\le L_f||\ell_i||_1,\]
		independent of $N$, of $\pi$, and of where in the permutation $\pi$ the player $i$ falls.
	 \end{corollary}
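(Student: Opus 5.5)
The plan is to chain three inequalities: the Lipschitz property of $f$, the coordinatewise identity $\Lambda(S)_{(k,m)}=v_{(k,m)}(S)$, and \Cref{lemma:bound_on_insertion_of_player} summed over the grid. Fix a permutation $\pi$ and a player $i$, and write $S=P_i^{\pi}$, so $i\notin S$. Because $v_f(T)=f(\Lambda(T))$ for every coalition $T$, the Lipschitz hypothesis immediately gives
\[|v_f(S\cup\{i\})-v_f(S)|\le L_f\,\|\Lambda(S\cup\{i\})-\Lambda(S)\|_1 .\]
Any additive constant or bias in $f$ cancels in this difference, so we may ignore it.

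Next I will expand the $1$-norm coordinatewise over the flattened $K\times|M|$ vector:
\[\|\Lambda(S\cup\{i\})-\Lambda(S)\|_1=\sum_{m\in M}\sum_{k=1}^K |v_{(k,m)}(S\cup\{i\})-v_{(k,m)}(S)|.\]
For each fixed grid point $m$, \Cref{lemma:bound_on_insertion_of_player} with $i_0=i$ bounds the inner sum by $\ell_i(m)$. Summing over $m\in M$ then gives $\|\ell_i\|_1$, understood as the discrete $1$-norm $\sum_{m\in M}\ell_i(m)$ of the tent function on the grid, as in the display preceding the corollary. Combining the two steps yields $|v_f(P_i^{\pi}\cup\{i\})-v_f(P_i^{\pi})|\le L_f\|\ell_i\|_1$.

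For the independence claim, note that none of the quantities in the final bound ($L_f$ and the values $\ell_i(m)$) involves $S$, $\pi$, $N$, or the position of $i$ in $\pi$. The only subtle point is the edge cases of the lemma: when $S=\varnothing$, or when $|S|<k$ so that padding zeros enter. In these cases inserting $i$ shifts zeros down the layers, and the telescoping in the lemma still works because the padded values are $0$ and all tent values are nonnegative. I would check these cases explicitly, but I expect no real obstacle; the corollary is essentially an immediate consequence of the lemma.
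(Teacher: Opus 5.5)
Your proposal is correct and follows the paper's own argument: apply the $1$-norm Lipschitz bound for $f$, expand $\|\Lambda(S\cup\{i\})-\Lambda(S)\|_1$ coordinatewise as $\sum_{m\in M}\sum_{k=1}^K|v_{(k,m)}(S\cup\{i\})-v_{(k,m)}(S)|$, and bound each inner sum by $\ell_i(m)$ using \Cref{lemma:bound_on_insertion_of_player}, which gives the discrete norm $\|\ell_i\|_1=\sum_{m\in M}\ell_i(m)$. Your remark on the padded zeros is also sound, since nonnegativity of the tent values is what lets the lemma's telescoping hold when $|S|<k$.
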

	 
	 We now recall the complexity of estimating Shapley values via Monte Carlo sampling.
	 	 
	 \begin{theorem}[Hoeffding's inequality]{\cite[Theorem 2]{Hoeffding1963}}
	 \label{theorem:hoeffding_inequality}
	 	Let $X_1,\dots,X_n$ be independent random variables, such that $a_j\le X_j\le b_j$ almost surely for all $1\le j\le n$, and let $S_n= \sum_{j=1}^nX_j$. Then for every $\delta>0$,
		
		\[\text{Pr}\left(\left| S_n-\mathbf{E}[S_n] \right|\ge \delta \right)\le 2\text{exp}\left(-\dfrac{2\delta^2}{\sum_{j=1}^n(b_j-a_j)^2}\right).\]
	 \end{theorem}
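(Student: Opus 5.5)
The plan is the classical Chernoff-bound argument: an exponential moment bound for each centered summand, multiplied together by independence and then optimized. Centering first, set $Y_j=X_j-\mathbf{E}[X_j]$. Then $\mathbf{E}[Y_j]=0$ and $Y_j$ lies almost surely in an interval of length $b_j-a_j$, namely $[a_j-\mathbf{E}X_j,\,b_j-\mathbf{E}X_j]$. Write $S_n-\mathbf{E}[S_n]=\sum_j Y_j$. I will bound the upper tail $\text{Pr}(\sum_j Y_j\ge \delta)$. The lower tail follows by applying the same bound to $-Y_j$, which have the same interval lengths. A union bound over the two tails then produces the factor $2$.

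For the upper tail, fix $s>0$ and apply Markov's inequality to $e^{s\sum_j Y_j}$:
\[\text{Pr}\Big(\sum_j Y_j\ge\delta\Big)\le e^{-s\delta}\,\mathbf{E}\big[e^{s\sum_jY_j}\big]=e^{-s\delta}\prod_{j=1}^n\mathbf{E}\big[e^{sY_j}\big].\]
The factorization uses independence. The key ingredient is Hoeffding's lemma: if $\mathbf{E}Y=0$ and $a\le Y\le b$ almost surely, then $\mathbf{E}[e^{sY}]\le \exp\big(s^2(b-a)^2/8\big)$.

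Granting the lemma, the right-hand side is at most
\[\exp\Big(-s\delta+\tfrac{s^2}{8}\textstyle\sum_j(b_j-a_j)^2\Big).\]
Minimizing the exponent in $s$ gives $s=4\delta/\sum_j(b_j-a_j)^2$. Substituting this value yields $\exp\big(-2\delta^2/\sum_j(b_j-a_j)^2\big)$, which is the claimed bound for one tail.

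The main work, and the step I expect to be the obstacle, is proving Hoeffding's lemma. I would use convexity of $x\mapsto e^{sx}$ on $[a,b]$:
\[e^{sY}\le \frac{b-Y}{b-a}e^{sa}+\frac{Y-a}{b-a}e^{sb}.\]
Taking expectations and using $\mathbf{E}Y=0$, let $p=-a/(b-a)\in[0,1]$ and $h=s(b-a)$. This gives $\mathbf{E}e^{sY}\le e^{L(h)}$ with
\[L(h)=-hp+\log\big(1-p+pe^{h}\big).\]
Direct computation shows $L(0)=0$ and $L'(0)=0$. The second derivative is $L''(h)=q(1-q)$ with $q=pe^h/(1-p+pe^h)\in[0,1]$, so $L''(h)\le 1/4$. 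Taylor's theorem with remainder then gives $L(h)\le h^2/8=s^2(b-a)^2/8$, which proves the lemma.

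The only delicate points are the degenerate case $a=b$, where $Y=0$ and the lemma is trivial, and checking that $p\in[0,1]$. The latter holds because $a\le 0\le b$, which in turn follows from $\mathbf{E}Y=0$.
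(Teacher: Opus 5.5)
Your proof is correct. The paper gives no argument of its own here, only the citation \cite[Theorem 2]{Hoeffding1963}, and your Chernoff bound combined with Hoeffding's lemma (convex interpolation, $L(0)=L'(0)=0$, $L''\le 1/4$, Taylor) is exactly the standard proof from that source; the two-sided factor $2$, which Hoeffding states only one-sidedly, is correctly recovered by applying the bound to $\pm Y_j$ and taking a union bound. The only corner you leave implicit is $\sum_j(b_j-a_j)^2=0$, where your choice of $s$ is undefined, but then every $X_j$ is almost surely constant and the left-hand side is $0$.
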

	 
	 Suppose the $X_j$'s are i.i.d. copies of a single random variable $X$ confined to a common interval, $a\le X\le b$ almost surely, of length $L=b-a$. Note that $\sum_{j=1}^n L^2=nL^2$ and if $\mu=\mathbf{E}[X]$ then $\mathbf{E}[S_n]=\mu$. Furthermore, the event $|S_n-\mathbf{E}[S_n]|\ge \delta$ is identical to the event $|\frac{1}{n}S_n-\mu|\ge \frac{\delta}{n}$. Letting $\delta=n\varepsilon$, by \Cref{theorem:hoeffding_inequality} we immediately have the following:
	
	\begin{corollary}
	\label{corollary:hoeffdings_inequality_iid}
		Let $X_1,\dots,X_n$ be i.i.d. copies of a single random variable $X$ confined to a common interval $a\le X\le b$, such that $a\le X_j\le b$ almost surely for all $1\le j\le n$, and let $S_n= \sum_{j=1}^nX_j$, and $L=b-a$. Then for every $\varepsilon>0$,
		
		\[\text{Pr}\left(\left| S_n-\mathbf{E}[S_n] \right|\ge \varepsilon \right)\le 2\text{exp}\left(-\dfrac{2\varepsilon^2}{nL^2}\right).\]
	\end{corollary}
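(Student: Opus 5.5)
The plan is to obtain the statement by specializing \Cref{theorem:hoeffding_inequality} directly, without any rescaling. Since $X_1,\dots,X_n$ are i.i.d. copies of $X$, they are in particular independent, which is the only structural hypothesis Hoeffding's inequality needs. The bounds $a\le X_j\le b$ hold almost surely for every $j$, so we may take $a_j=a$ and $b_j=b$ for all $1\le j\le n$. Then $b_j-a_j=L$ for every $j$, and hence
\[\sum_{j=1}^n (b_j-a_j)^2=nL^2.\]

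Next, apply \Cref{theorem:hoeffding_inequality} with its deviation parameter $\delta$ set equal to the given $\varepsilon>0$. This gives
\[\text{Pr}\left(\left|S_n-\mathbf{E}[S_n]\right|\ge \varepsilon\right)\le 2\,\text{exp}\left(-\dfrac{2\varepsilon^2}{nL^2}\right),\]
which is exactly the claimed inequality. No other step is involved. One should write $\mathbf{E}[S_n]=n\mu$, where $\mu=\mathbf{E}[X]$ (not $\mu$ as in the preceding paragraph), but the argument never uses the value of $\mathbf{E}[S_n]$.

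The only point I expect to need care is reconciling this with the form used later for the sample mean. There the relevant event is $\left|\frac{1}{n}S_n-\mu\right|\ge\varepsilon$, which coincides with $|S_n-n\mu|\ge n\varepsilon$. Substituting $n\varepsilon$ for $\varepsilon$ in the displayed bound yields $2\,\text{exp}(-2n\varepsilon^2/L^2)$. This is the version that produces the sample-size requirement $n\ge \frac{L^2}{2\varepsilon^2}\log\frac{2}{\delta}$ once combined with the range bound of \Cref{corollary:bound_on_insertion_of_player}. In that application $L=2R_i$, since the marginal contributions lie in $[-R_i,R_i]$, which gives the $2R_i^2/\varepsilon^2$ constant in the downstream theorem. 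I would record this rescaled consequence as a remark immediately after the corollary, so that the later convergence-rate theorem can cite it directly.
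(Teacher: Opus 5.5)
Your proposal is correct and takes the paper's route: apply \Cref{theorem:hoeffding_inequality} with $a_j=a$ and $b_j=b$, so that $\sum_j(b_j-a_j)^2=nL^2$, and take the deviation parameter equal to $\varepsilon$. Your bookkeeping is cleaner than the paper's surrounding text, which writes $\mathbf{E}[S_n]=\mu$ and substitutes $\delta=n\varepsilon$ (that substitution gives the sample-mean bound $2\exp(-2n\varepsilon^2/L^2)$, not the displayed one), and the rescaled form you propose to record is exactly what the proof of \Cref{theorem:shapley_values_error_bound} uses without saying so.
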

	
	This corollary will be used to give us the error bounds on approximating Shapley values for nonlinear models. Estimation of Shapley values has a long history. Castro, G\'omez, and
Tejada in \cite{Castro2009} analyze Monte Carlo estimation of the Shapley value and give a confidence interval based on the Central Limit Theorem. This bound for the error in the approximation is only asymptotic, as the number of samples grows to infinity, and offers no finite sample guarantee. Maleki et al. \cite{maleki2014boundingestimationerrorsamplingbased} address this gap, providing non-asymptotic bounds on the approximation error. In particular they provide a Chebyshev-type bound when the variance is known, and a Hoeffding-type bound of essentially the same from as \Cref{theorem:shapley_values_error_bound} below when only the range is known. However, for a general game $v$, that range
bounded crudely (e.g. by the total spread of $v$ over all coalitions) and potentially growing with the player size $N$. \v{S}trumbelj and Kononenko in \cite{trumbelj2013} popularized permutation sampling as a practical, model-agnostic estimator for feature attribution more broadly, again without exploiting any structure specific to $v$.  Our bound in \Cref{theorem:shapley_values_error_bound} has no dependence on $N$.

	  In particular, for a tent function of player $i$, let $R_i\od L_f\cdot ||\ell_i||_1$ where $L_f$ is the Lipschitz constant of the model $f$. Then we have the following.
	  \begin{theorem}[Convergence rate]
	  \label{theorem:shapley_values_error_bound}
	 	Fix a player $i$, and let $\hat{\varphi}_i(v_f)$ be the \Cref{alg:nonlinear_model} estimate after $n$ i.i.d. sampled permutations. For any $\varepsilon>0$, $\delta \in (0,1)$,
		\[n\ge \dfrac{2R_i^2}{\varepsilon^2}\log \dfrac{2}{\delta}\Longrightarrow \text{Pr}(\hat{\varphi}_i(v_f)-\varphi_i(v_f)\ge \varepsilon)<\delta,\]
		with $R_i$ as in \Cref{corollary:bound_on_insertion_of_player}. A guarantee holding simultaneously for all $N$ players, via a probability union bound, using a per player confidence level of $\frac{\delta}{N}$ only increases this to $n= O(R^2_{\max}\varepsilon^{-2} \log(\dfrac{N}{\delta}))$ with $R_{\max} =\max_iR_i$ which is logarithmic and not polynomial, in $N$.
	 \end{theorem}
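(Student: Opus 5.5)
The estimator $\hat{\varphi}_i(v_f)$ produced by \Cref{alg:nonlinear_model} is an empirical mean of $n$ i.i.d. random variables, so the theorem reduces to a bounded-differences concentration inequality. The plan has three steps: identify the summands, bound their range using \Cref{corollary:bound_on_insertion_of_player}, and apply \Cref{theorem:hoeffding_inequality} to the mean.

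For the $j$-th sampled permutation $\pi_j$, define
\[X_j\od v_f(P_i^{\pi_j}\cup\{i\})-v_f(P_i^{\pi_j}).\]
Step (b)(iii) of \Cref{alg:nonlinear_model} adds exactly this marginal contribution when player $i$ is inserted, because $v_{\text{old}}$ equals $f(\Lambda(P_i^{\pi_j}))$ at that moment. The top-$K$ truncation in step (b)(i) is harmless, since only the first $K$ layers enter $f$. Hence $\hat{\varphi}_i(v_f)=\frac1n\sum_{j=1}^n X_j$.

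The permutations are uniform and independent, so the $X_j$ are i.i.d. By \Cref{def:shap_value_permutation}, $\mathbf{E}[X_j]=\varphi_i(v_f)$, which gives unbiasedness. By \Cref{corollary:bound_on_insertion_of_player}, $|X_j|\le L_f\|\ell_i\|_1=R_i$ almost surely. So each $X_j$ lies in $[-R_i,R_i]$, an interval of length at most $2R_i$, and this bound holds independently of $N$ and of the coalition.

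Next, apply \Cref{theorem:hoeffding_inequality} to $S_n=\sum_j X_j$ with threshold $n\varepsilon$. Note that the paper's \Cref{corollary:hoeffdings_inequality_iid} is mis-scaled, so I would use the correctly normalized form for the mean:
\[\Pr\left(|\hat\varphi_i-\varphi_i|\ge\varepsilon\right)\le 2\exp\!\left(-\frac{2n^2\varepsilon^2}{n(2R_i)^2}\right)=2\exp\!\left(-\frac{n\varepsilon^2}{2R_i^2}\right).\]
Requiring the right-hand side to be at most $\delta$ gives $n\ge \frac{2R_i^2}{\varepsilon^2}\log\frac2\delta$, which is exactly the stated threshold.

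For strict inequality, note that the two-sided event contains the one-sided event in the statement. One-sided Hoeffding already gives the bound $\exp(-n\varepsilon^2/(2R_i^2))\le\delta/2<\delta$. For the two-sided version, I would either accept $\le\delta$ or observe that Hoeffding's bound is not attained for nondegenerate variables.

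For the simultaneous guarantee, replace $\delta$ by $\delta/N$ for each player and take a union bound over the $N$ events. This requires $n\ge \frac{2R_{\max}^2}{\varepsilon^2}\log\frac{2N}{\delta}=O\!\left(R_{\max}^2\varepsilon^{-2}\log\frac N\delta\right)$.

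The main obstacle is the range bound. The computation itself is routine, but it rests on \Cref{lemma:bound_on_insertion_of_player} holding for every coalition. That lemma requires $f$ to be Lipschitz in $\|\cdot\|_1$ on the discretized landscape, and the per-layer sums over the grid points $m\in M$ must be added to give $\|\ell_i\|_1$ as a discrete sum over the grid. Once that pointwise bound is in hand, everything else follows by direct substitution.
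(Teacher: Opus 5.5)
Your proposal is correct and follows essentially the same route as the paper. Both identify the per-permutation summand as $v_f(P_i^{\pi}\cup\{i\})-v_f(P_i^{\pi})$, bound it in $[-R_i,R_i]$ via \Cref{corollary:bound_on_insertion_of_player}, and apply Hoeffding with range $2R_i$ to get $2\exp(-n\varepsilon^2/(2R_i^2))\le\delta$. You also note two details the paper glosses over: \Cref{corollary:hoeffdings_inequality_iid} as written is scaled for the sum rather than the mean (the paper's proof silently uses the correct mean form), and the one-sided bound $\delta/2$ is what justifies the strict inequality.
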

	
	\begin{proof}
	The per-permutation summand for player $i$ in \Cref{alg:nonlinear_model} is exactly the marginal contribution $v_f(P_i^{\pi}\cup \{i\})-v_f(P_i^{\pi})$. This is an i.i.d. sample, generated by the random draw of $\pi$, of a random variable with mean $\varphi_i(v_f)$. This random variable, by \Cref{corollary:bound_on_insertion_of_player}, takes values in the interval $[-R_i,R_i]$. Applying \Cref{corollary:hoeffdings_inequality_iid}, with $L = 2R_i$, yields 
	\[\text{Pr}\left(|\hat{\varphi}_i(v_f)-\varphi_i(v_f)|\ge \varepsilon\right)\le 2\exp\left(\dfrac{-2n\varepsilon^2}{(2R_i)^2}\right)=2\exp\left(\dfrac{-n\varepsilon^2}{2R_i^2}\right).\]
Requiring that $2\exp\left(\dfrac{-n\varepsilon^2}{2R_i^2}\right)\le \delta$ yields $n\ge \dfrac{2R_i^2}{\varepsilon^2}\log \dfrac{2}{\delta}$.
	\end{proof}
	
	\begin{remark}
	The $\varepsilon^{-2}$ dependence in \Cref{theorem:shapley_values_error_bound} is the ordinary, unavoidable rate of any Monte Carlo mean estimator. What is special about the landscape featurization is that the constant multiplying $\varepsilon^{-2}$, namely $R^2_i$, does not increase with the number of players $N$. A priori, we might expect the $2^N$ coalition space for the game $v_f$ to make the marginal contributions more erratic, and hence require more samples for larger $N$. However,  \Cref{lemma:bound_on_insertion_of_player} shows that this is not the case.  In particular, a larger pool of coalitions cannot make one point’s own insertion effect any larger than its own persistence already allows. Therefore, this is a slower rate than one might hope for, and a faster one than expected.
	\end{remark}
	
	\subsection{Tree ensembles}
	
	\label{section:tree_ensembles}

	Tree ensembles (e.g. decision trees, random forests and XGBoost) are an exception, where it is possible to compute the Shapley values of features exactly and not just estimates in polynomial time using dynamic programming \cite{Lundberg2020}. Therefore, we can compute exact Shapley values for a tree ensemble trained on a featurization of persistence diagrams. However, how to combine these values to give the attribution to points in a persistence diagram is currently open. 
	
	\section{Experiments}
	
	\label{section:experiments}
	Here we show LandscapeSHAP, and Shapley value estimations for nonlinear models, can explain how models are making decisions based on persistent homology classes. We showcase the power of this method with both a regression and a classification task.
		
	\subsection{Curvature regression}
		Here we investigate how our method can be applied in practice for a linear regressor.  In \cite{Bubenik2020}, the authors sample randomly $1000$ points on a disc with radius $1$ of constant Gaussian curvature $\kappa$ and predict $\kappa$ from the Vietoris-Rips persistence diagrams of the resulting point-cloud with respect to the intrinsic metric. They give mathematical evidence that small bars in these persistence diagrams should be good predictors of the curvature, and are able to train different machine learning models with good $R^2$-scores, such as a Support Vector Regression (SVR). 
		
	\subsubsection{Average landscapes detect curvature}
		We reproduced the dataset of \cite{Bubenik2020}. In particular, for $\kappa\in [-2,2]$, starting at $-2$ and taking increments of $0.04$ towards $2$, we sample $1000$ points from the disc of radius $1$ with constant Gaussian curvature $\kappa$ (\Cref{fig:curvature_examples}). 
		
	\begin{figure}[H]
	\centering
		\includegraphics[scale=0.45]{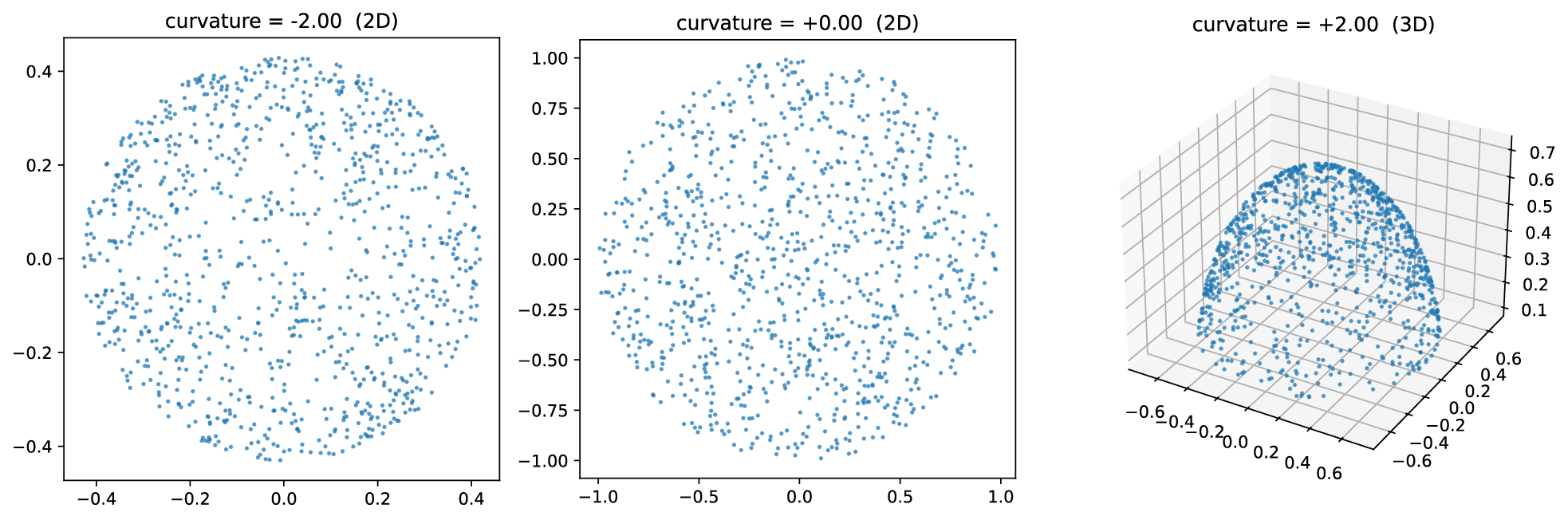}
		\caption{Samples from discs of radius $1$ of constant curvatures. Left, $1000$ points sampled from disc of curvature $-2$, middle $1000$ points sampled from curvature $0$, right $1000$ points sampled from curvature $2$.}
		\label{fig:curvature_examples}
	\end{figure}
	
	We do this $100$ times for each curvature value $\kappa$. Then we compute the Vietoris-Rips filtration persistence diagrams in dimension $1$ for each point cloud using the Python library \cite{ctralie2018ripser} of the Ripser algorithm \cite{Bauer2021Ripser}. We pick a threshold to remove persistence diagram points with low persistence from the analysis. We discard persistence diagram points whose persistence ($d_i-b_i$) falls below $2\%$ of the  $99$th percentile of observed death values, treating near-diagonal points as noise rather than genuine topological features. Each such persistence diagram is converted to a persistence landscape with $100$ grid points and top $25$ layers, resulting in a $2500$-dimensional vector. For each $\kappa$, we compute the average landscapes, resulting in $101$ vectors in a $2500$-dimensional space. The first principal component of this data is perfectly correlated with the curvature values $\kappa$ (\Cref{fig:curvature_pca}).

	\begin{figure}[H]
	\centering
		\includegraphics[scale=0.35]{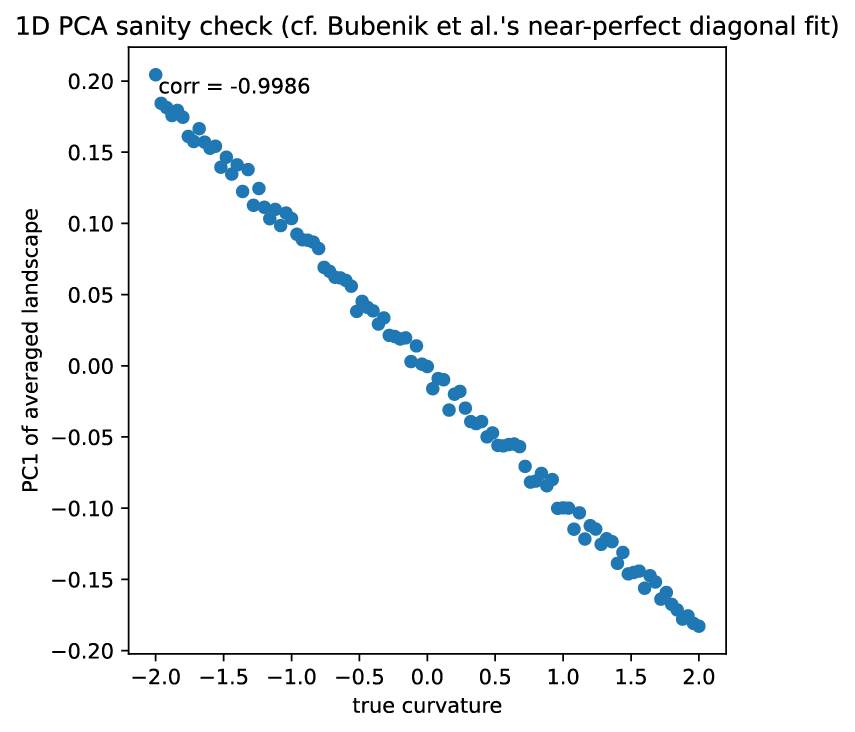}
		\caption{The first principal component ($y$-axis) plotted against the true underlying curvature values associated with the average persistence landscape vectors.}
		\label{fig:curvature_pca}
	\end{figure}
	
		\subsubsection{Explaining a linear SVR}
	The fact that persistence landscapes of this data set are organized continuously with respect to curvature or that their first principal component can be used to predict curvature is not new \cite{Bubenik2020}. Therefore, a linear model seems as good as any for a regression task which predicts curvature from the average landscapes. We trained an SVR model to do precisely that. The hyperparameters used to construct the SVR were the default ones given in scikit-learn's implementation and not optimized in any way. We now illustrate how LandscapeSHAP explains this model's predictions.
	
	We computed the Shapley values for the averaged persistence landscapes of the SVR just as described in \Cref{section:average_landscapes}. The results are shown in \Cref{fig:shap_curvature}.
	
	\begin{figure}[H]
	\centering
		\includegraphics[scale=0.3]{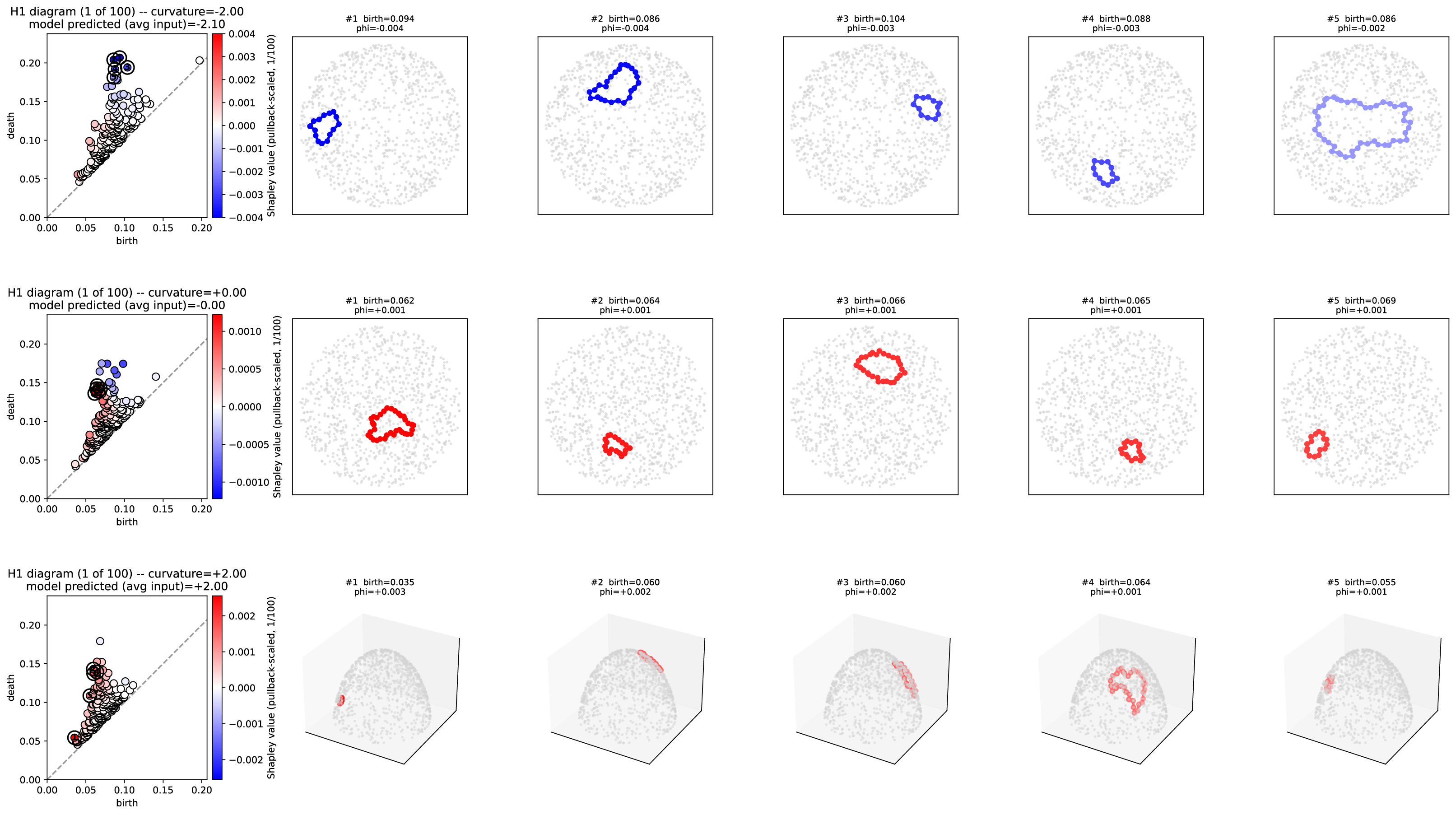}
		\caption{LandscapeSHAP explaining (linear) curvature regression. Top, a persistence diagram coming from a point cloud sampled from curvature $\kappa=-2$, colored by using the Shapley values of the points. The top five Shapley values (in absolute value) are selected and the corresponding representative cycles are shown. Middle, a persistence diagram coming from a point cloud sampled from curvature $\kappa=0$, colored by using the Shapley values of the points. The top five Shapley values (in absolute value) are selected and the corresponding representative cycles are shown. Bottom, a persistence diagram coming from a point cloud sampled from curvature $\kappa=2$, colored by using the Shapley values of the points. The top five Shapley values (in absolute value) are selected and the corresponding representative cycles are shown.}
		\label{fig:shap_curvature}
	\end{figure}
	
	Based on the results displayed in \Cref{fig:shap_curvature} we make the following observations.
	
	\begin{itemize}[left=0pt]
		\item The model's prediction for a given persistence diagram is equal to the integral (sum) of the heatmap given by Shapley values (this is simply the efficiency axiom being satisfied).
		\item The SVR model is allocating (blue) negative Shapley values to persistence classes that appear in point clouds of negative curvature. Summing the Shapley values of these persistent homology classes pushes to model towards predicting negative curvature. On the other hand, the model is allocating (red) positive Shapley values to persistence classes that appear in point clouds of positive curvature. 
		\item For the curvature $\kappa=0$ case, the sum of positive and negative Shapley values cancels out giving a prediction close to $0$ to the model.
		\item The model assigns negative Shapley values to persistence classes that have larger death values (higher on the $y$-axis) and positive Shapley values to the persistence classes that have smaller death values (lower on the $y$-axis). This is in agreement with hyperbolic distances on average being larger than spherical distances.
		\item The most relevant (highest absolute Shapley values) persistent homology classes sometimes correspond to shorter bars in the persistence diagram. This agrees with the mathematical theory derived in \cite{Bubenik2020}. 
	\end{itemize}
	 Therefore, LandscapeSHAP confirms that the model is making its predictions in a reasonable way based on the observations above.
	
	\subsection{Dynamical system classifier}
	Here we investigate how our method can be applied in practice for a (linear) classifier. We consider a one-parameter family of discrete dynamical systems on a flat torus, defined by a linked twist map where each twist is governed by the logistic map \cite{Hertzsch2007}:

\begin{align*}
x_{n+1}&=x_n+r\cdot y_n(1-y_n)\mod 1\\
y_{n+1}&=y_n+r\cdot x_{n+1}(1-x_{n+1})\mod 1
\end{align*}
 
For each $r\in \{2,3.5,4,4.1, 4.3\}$, a point cloud with $1000$ points is generated with
an initial condition drawn uniformly at random from the unit square $[0, 1]^2$. An example of five point clouds, one for each $r$ value, is shown in \Cref{fig:dynamical_system}. This process is repeated $100$ times, resulting in $500$ point clouds. We compute the alpha complex persistent homology of each point cloud in dimension $1$ using the GUDHI library \cite{gudhi:AlphaComplex}. To keep things simple we use the Euclidean metric in the plane, and not a torus metric. We discard persistence diagram points whose persistence ($d_i-b_i$) falls below $2\%$ of the  $99$th percentile of observed death values, treating near-diagonal points as noise rather than genuine topological features.

	We train a linear Support Vector Machine (SVM) classifier and a radial kernel SVM classifier on the resulting persistence landscapes to predict the parameter $r$. The hyperparameters used to construct the SVM were the default ones given in scikit-learn's implementation and not optimized in any way. We only consider the first $15$ landscape layers and we disregard persistence diagram points below a certain threshold. Once the model has been trained we use LandscapeSHAP to explain the model's prediction, on the same 5 point clouds from \Cref{fig:dynamical_system}.

	\begin{figure}[H]
	\centering
		\includegraphics[scale=0.35]{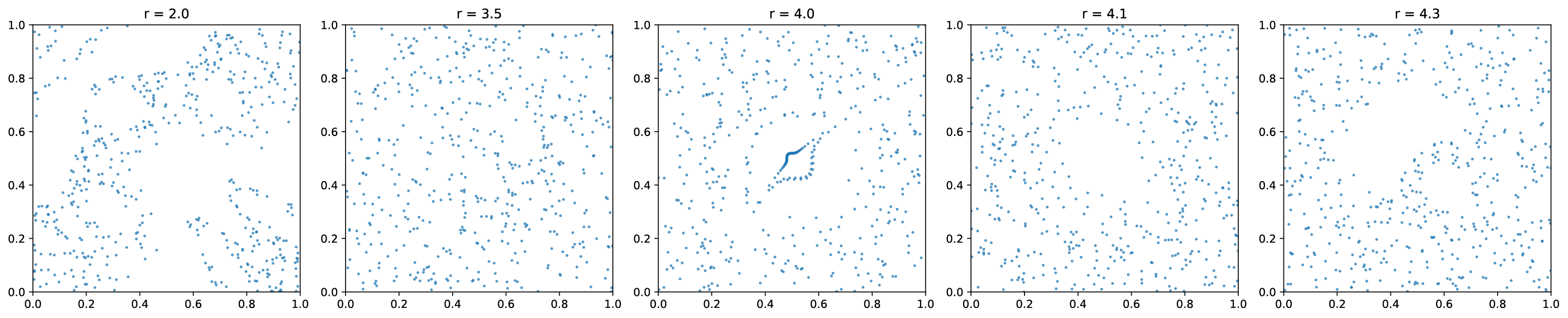}
		\caption{Point clouds with $1000$ points each of the dynamical system for $r\in \{2,3.5,4,4.1,4.3\}$ going from left to right.}
		\label{fig:dynamical_system}
	\end{figure}
	
	\subsubsection{Linear model}
	Here we examine the Shapley values of the linear SVM model and use them to explain how the model is making predictions. The first column in \Cref{fig:linear_shap_dynamical_system} shows the resulting persistence diagrams, where each point is colored according to its Shapley value coming from the linear classifier. 
		
	\begin{figure}[H]
	\centering
		\includegraphics[scale=0.27]{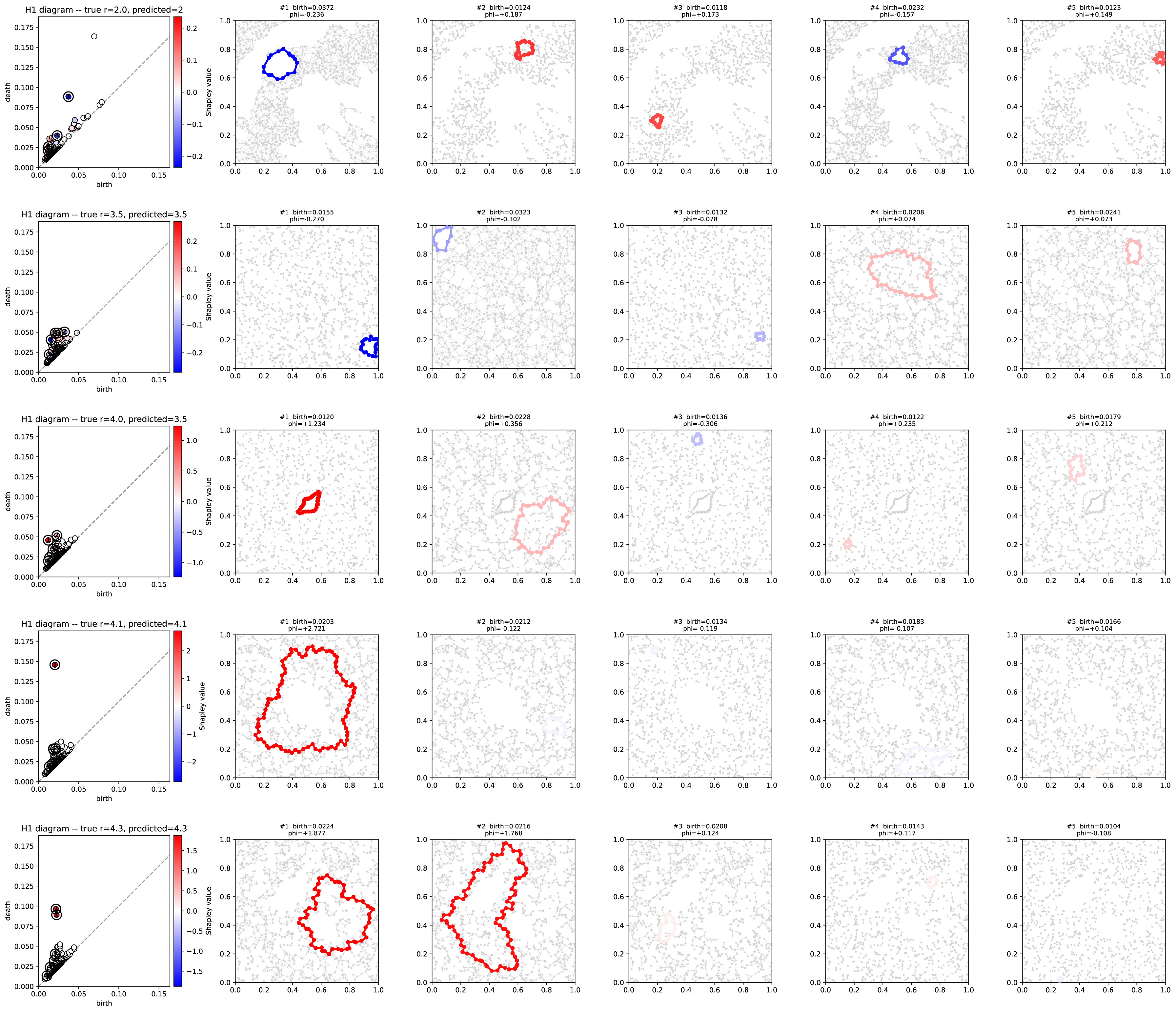}
		\caption{LandscapeSHAP explaining (linear) dynamical system classification. First row, a persistence diagram coming from a point cloud sampled from dynamics with $r=2$, colored by using the Shapley values of the points. Second row, a persistence diagram coming from a point cloud sampled from dynamics with $r=3.5$, colored by using the Shapley values of the points. Third row, a persistence diagram coming from a point cloud sampled from dynamics with $r=4$, colored by using the Shapley values of the points. Fourth row, a persistence diagram coming from a point cloud sampled from dynamics with $r=4.1$, colored by using the Shapley values of the points. Fifth row, a persistence diagram coming from a point cloud sampled from dynamics with $r=4.3$, colored by using the Shapley values of the points. The top five Shapley values (in absolute value) are selected and the corresponding representative cycles are shown, for each row.}
		\label{fig:linear_shap_dynamical_system}
	\end{figure}
	
	Based on the results displayed in \Cref{fig:linear_shap_dynamical_system} we make the following observations.
	
	\begin{itemize}[left=0pt]
		\item The two highest $r$ value rows ($r=4.1,4.3$) are for point clouds that have one or two large loops with large Shapley values  that dominate everything else in the same diagram. Thus, a single big topological hole in the dynamics is essentially deciding the classification. The larger loops are positive in Shapley value and are pushing the model's prediction towards higher $r$ values.
		\item  The $r=2,3.5$ rows have no dominant loop. The largest five Shapley values are an order of magnitude smaller in absolute value than in the $r=4.1,4.3$ cases and mixed in sign. The representative cycles are also smaller, scattered loops. Interestingly, the point with the largest persistence in the diagram has Shapley of close to $0$. Usually the mantra in persistence theory is that the points with larger persistence are more important. However, LandscapeSHAP allows us to determine if the model thinks they are important, and in this case it seems they are not. 
		\item The $r=4$ row is an interesting case. The model misclassified it to $r=3.5$ and its top Shapley values are noticeably weaker than the unambiguous $r=4.1,4.3$. The small Shapley values are indicative that the model is genuinely unsure about its prediction here.
	\end{itemize}
	
	Therefore, LandscapeSHAP is offering valuable insight into how a linear SVM is making its predictions based on the observations above. In particular, the misclassification for the  $r=4$ example is not an accident but likely a product of the model's uncertainty. Perhaps a linear model is not the best choice for this type of data (at least for correctly predicting the values of $r=4$), or perhaps the number of layers we used to construct the persistence landscapes was not the right hyperparameter. Whatever the case may be, LandscapeSHAP can be used on a different choice of a model (by approximating Shapley values for nonlinear models), or landscapes with a different number of layers and compare the results. Additionally, we only used persistent homology features in dimension $1$. Adding features from persistent homology in dimension $0$ could also improve the model's performance as was the case in \cite{JMLR:v18:16-337}. Our goal here is not to construct the ``best model" to predict the dynamics parameter $r$ for this dataset, but simply to showcase how LandscapeSHAP can be used to explain a given model's prediction. In real applications, LandscapeSHAP offers an additional way for practitioners to examine a model's validity, compare different classes of models or optimize for hyperparameters.
		
	\subsubsection{Linear model approximation}
	
	We took the same linear SVM classifier from before and approximated the Shapley values of points on the persistence diagrams by using \Cref{alg:nonlinear_model}. We wish to investigate how many permutations are needed for this model to approximate the Shapley values correctly. We will then use this number of permutations to approximate Shapley values for a nonlinear model. For five point clouds with different $r$ values, we see in \Cref{fig:shapley_values_errors_linear_model} that $n=3000$ permutations are good enough. 	
	
	\begin{figure}[H]
	\centering
		\includegraphics[scale=0.32]{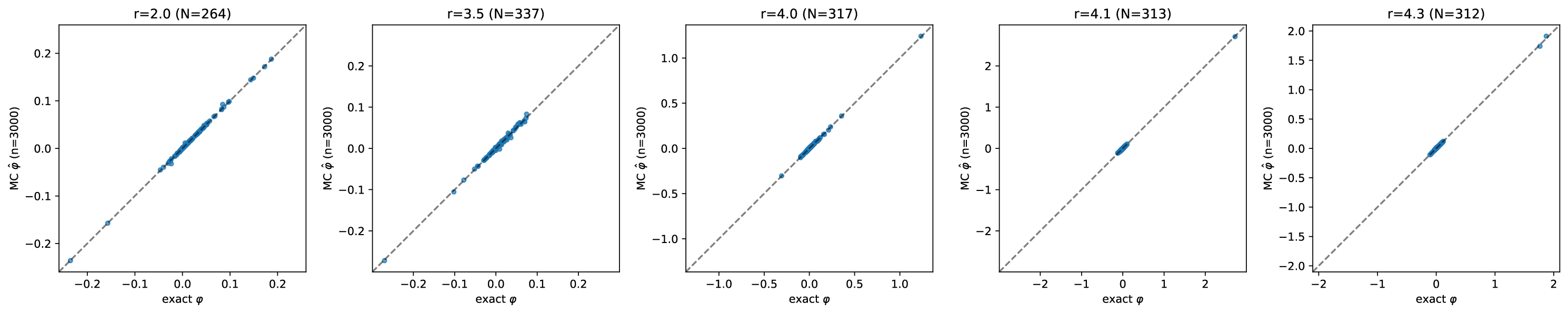}
		\caption{Estimated Shapley values vs exact Shapley values of the linear SVM model for the point clouds in \Cref{fig:linear_shap_dynamical_system}.}
		\label{fig:shapley_values_errors_linear_model}
	\end{figure}
		
	Note that the persistence diagrams in all cases in \Cref{fig:shapley_values_errors_linear_model} have more than $N=250$ points each, so the coalition space is at least of size $2^{250}$. The fact that only $3000$ permutations are sufficient to approximate Shapley values to this level of accuracy is amazing.
	
	For the same five example point clouds, we run $8$ independent Monte Carlo (\Cref{alg:nonlinear_model}) estimators, with different random seeds, up to $n$ permutations, where $n\in \{10, 30, 100, 300, 1000, 3000\}$. Against the exact values we track the mean absolute error, normalized by the average absolute Shapley value for that diagram. Thus, diagrams with different scales are comparable. We also compute the Pearson correlation between the Monte Carlo estimate and the exact values to check if the estimate at least gets the relative ordering/sign of Shapley values right. The results are shown in \Cref{fig:error_estimations_top_5}.
		
	\begin{figure}[H]
	\centering
		\includegraphics[scale=0.5]{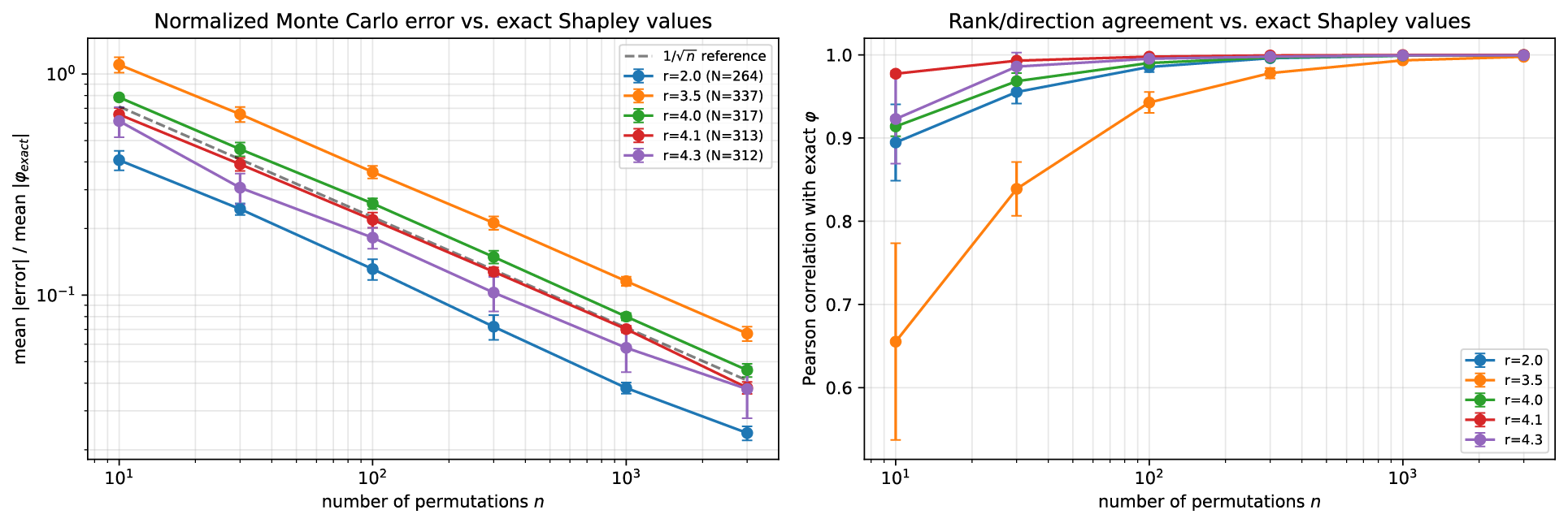}
		\caption{Convergence analysis of estimated Shapley values for a linear SVM. Left, the plot of mean error normalized by mean absolute Shapley values for five example diagrams against the number of permutations used on the $\log$ scale. Right, the Pearson correlation between estimates and exact Shapley values (across 8 different random seed estimations) plotted against the number of permutations used on the $\log$ scale.}
		\label{fig:error_estimations_top_5}
	\end{figure}
	
Across all five example diagrams, the Monte Carlo estimate error decays at the  $O(\frac{1}{\sqrt{n}})$, the rate \Cref{theorem:shapley_values_error_bound} predicts. The error curves run parallel to the reference line $\frac{1}{\sqrt{n}}$, with their vertical offset varying by diagram. This reflects the range of each diagram's marginal contributions which depends on the Lipschitz constant of the model $L_f$, number of layers $K$, and number of grid points $|M|$ as in \Cref{theorem:shapley_values_error_bound}. Notably, rank/direction agreement with the exact values saturates faster than magnitude does, so far fewer permutations are needed to trust which points matter most than to trust their precise attributed values.
	
	\subsubsection{Nonlinear model approximation}
	
	Now we train a nonlinear (radial kernel) SVM classifier on persistence landscapes to predict the parameter $r$. The hyperparameters used to construct the SVM were the default ones given in scikit-learn's implementation and not optimized in any way. We use \Cref{alg:nonlinear_model} to approximate the Shapley values of persistence diagram points for this model. We use $n=3000$ permutation samples to approximate the Shapley values, as this was a good enough sample to approximate the linear model. The results are illustrated in \Cref{fig:nonlinear_shap_dynamical_system}.
	
	\begin{figure}[H]
	\centering
		\includegraphics[scale=0.27]{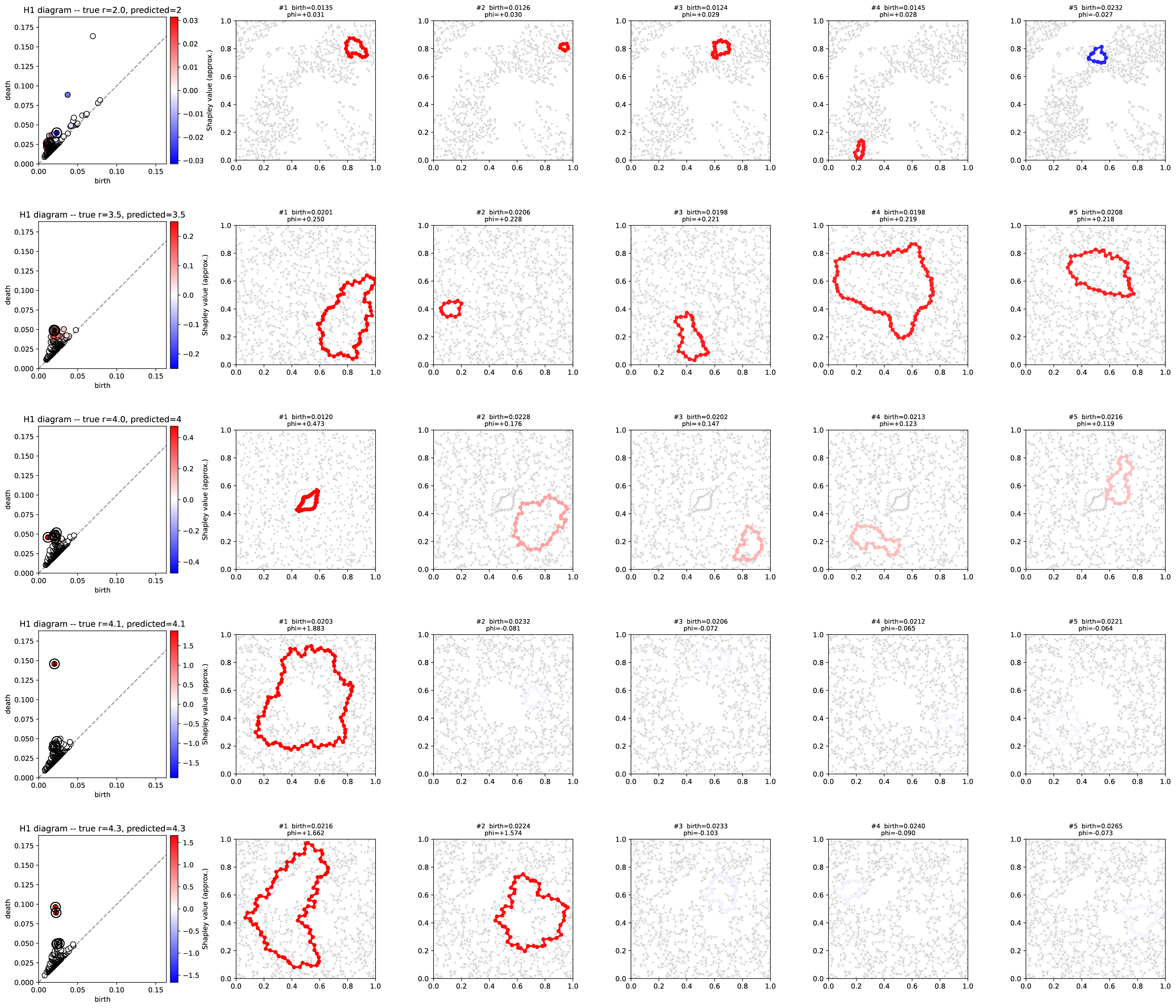}
		\caption{LandscapeSHAP explaining (nonlinear) dynamical system classification. First row, a persistence diagram coming from a point cloud sampled from dynamics with $r=2$, colored by using the Shapley values of the points. Second row, a persistence diagram coming from a point cloud sampled from dynamics with $r=3.5$, colored by using the Shapley values of the points. Third row, a persistence diagram coming from a point cloud sampled from dynamics with $r=4$, colored by using the Shapley values of the points. Fourth row, a persistence diagram coming from a point cloud sampled from dynamics with $r=4.1$, colored by using the Shapley values of the points. Fifth row, a persistence diagram coming from a point cloud sampled from dynamics with $r=4.3$, colored by using the Shapley values of the points. The top five Shapley values (in absolute value) are selected and the corresponding representative cycles are shown, for each row.}
		\label{fig:nonlinear_shap_dynamical_system}
	\end{figure}
	
	Based on the results displayed in \Cref{fig:nonlinear_shap_dynamical_system} we make the following observations. 
	
	\begin{itemize}[left=0pt]
		\item For point clouds with $r=4.1$ and $r=4.3$, the model thinks there is one dominant loop ($r=4.1$), or two comparably large ones ($r=4.3$),  that  account for almost the entire prediction. Every other persistent homology class in the top five absolute Shapley values is near zero. Therefore for these point clouds, the nonlinear SVM isn't doing anything qualitatively different from the linear SVM. It's recognizing the same topological features of the data as important, only with different absolute scores. 
		\item For point clouds with $r=2.0$, the linear SVM allocated a negative weight on one specific loop (\Cref{fig:linear_shap_dynamical_system}). On the other hand the nonlinear SVM model, on the same diagram, allocates nothing larger than $\pm 0.03$ to any point. The model therefore isn't leaning on any single topological feature to identify this class. Instead it seems its decision boundary is driven by something more diffuse across the whole corresponding landscape.
		\item For point clouds with $r=3.5$, the linear model's top five absolute Shapley values were small and sign-mixed (\Cref{fig:linear_shap_dynamical_system}). On the other hand nonlinear SVM finds five loops of consistently positive, moderate size. This is a clearer, more distributed positive signal than the linear model extracted from the identical diagram. 
	\end{itemize}
	
	Therefore, LandscapeSHAP is offering valuable insight into how a nonlinear SVM is making its predictions based on the observations above. In particular, the nonlinear kernel isn't uniformly ``more confident" or ``less confident" than the linear model. It allocates credit differently diagram by diagram, sometimes concentrating it more to a few topological features, sometimes spreading it more thinly than the linear model does.	
	
	\subsection{Representative cycles}
	Here we explain how the representative cycles in \Cref{fig:shap_curvature,fig:linear_shap_dynamical_system,fig:nonlinear_shap_dynamical_system} are chosen.
	For each of the top five absolute Shapley value points in an example diagram, we recompute the birth edge $(u,v)$ and death triangle via GUDHI's simplex tree \cite{gudhi:FilteredComplexes}, then breadth first search for a path between $u$ and $v$ using only edges earlier than the birth time. We close the path with the birth edge to obtain a genuine representative cycle. For the dynamical systems point clouds the cycle is drawn over the alpha complex filtration at the birth time of the cycle. For the curvature point clouds the cycle is drawn over the point clouds themselves and the Vietoris-Rips complex filtration is not shown at the birth time.
	
	\section{Additive featurizations}
	\label{section:other_featurizations}
	
	Here we discuss how one can allocate credit based on a model's prediction to the points in a persistence diagram based on additive featurizations with the example of persistence images. A persistence image \cite{JMLR:v18:16-337} is another popular featurization of persistence diagrams. Persistence images are constructed in the following way. Given a persistence diagram $D$, each diagram point $(b_i,d_i)$ is first mapped under the linear transform $T(x,y) \od (x,y-x)$ to the point $T(b_i,d_i) = (b_i, d_i-b_i)$ in birth-persistence coordinates. Let $f: \mathbb{R}^2 \to \mathbb{R}$ be a nonnegative weighting function that vanishes continuously along the $x$-axis and let $\phi_u$ be a differentiable probability distribution centered at each $u\in \mathbb{R}^2$, e.g. a Gaussian. The \emph{persistence surface} is then defined to be
	\[\rho_D(z)\od \sum_{u\in T(D)}f(u)\phi_u(z).\]
	The \emph{persistence image} is the vector of pixel integrals $I(\rho_D)_p\od \int\int_p\rho_D(z)dz$, over a fixed grid of pixels $p$. Restricting the sum defining the persistence surface $\rho_D$ to a coalition $S$ of persistence diagram points gives the characteristic function
	\[I_S(\rho_D)_p\od \sum_{i\in S}f(u_i)\kappa_{(i,p)},\quad \kappa_{(i,p)}\od \int\int_p \phi_{u_i}(z)dz,\]
	where $u_i=T(b_i,d_i)$, and  $\kappa_{(i,p)}$ is the player $i$'s own contribution to pixel $p$ and depends only on player $i$’s own coordinates $u_i$, never on which other points happen to be present.
	 
	 Unlike the persistence landscape games, $v_{(k,m)}$, this characteristic function is additive in $S$, i.e., adding or removing a persistence diagram point does not change any other point's credit/contribution. Therefore there is no genuine credit sharing problem to solve, unlike with the persistence landscape games. For an additive cooperative game $v(S)=\sum_{i\in S} c_i$ its Shapley values for each player are straightforward; $\varphi_i(v)=c_i$ trivially. Composing with a linear model with weights $w_p$ and bias $b$,
	\[h(I(\rho_D))=\sum_{p}w_p I(\rho_D)_p+b,\]
	preserves additivity giving the closed form solution
	\[\varphi_i (v_h)=f(u_i)\sum_p w_p \kappa_{(i,p)}.\]
	Therefore, this case is much easier to derive than for an order statistic game like the landscape, where the points compete for a rank and credit allocation needs to be carefully derived and thus no \Cref{theorem:closed_form_linear_model} analogue is needed. This is in fact true for any other additive featurization of persistence diagram such as Betti curves \cite{Umeda2017}, total persistence \cite{CohenSteiner2010}, Euler characteristic curves \cite{Richardson2014}. We illustrated how to derive this for persistence images, but similar arguments can be used to derive it for any additive games involving persistence diagrams.
	
	\begin{remark}
	The additivity of persistence images and the straightforward computation of Shapley values rely on each pixel being an unnormalized sum. Many implementations of persistence images normalize the image, e.g. dividing by the total mass so that the pixel values sum to $1$. If the persistence image of every coalition is normalized as well, a pixel’s value becomes a ratio of two additive games, 
	\[I_S(\rho_D)_p \od \dfrac{\sum_{i\in S}f(u_i)\kappa_{(i,p)}}{\sum_{i\in S}f(u_i)},\]
and removing a player now changes the denominator seen by every other point. This introduces nontrivial interactions between players. The game is no longer additive (or even monotone in general), so the trivial argument above no longer applies. Computing $\varphi_i$ exactly would require a fresh derivation of a potential closed form solution. Short of that,
the normalized case falls back to the same subsampling approach used later in this note for general nonlinear models.
	\end{remark}
			
	\section{Discussion}
	\label{section:discussion}
	
	This manuscript introduces the first axiomatic approach, using Shapley values, towards explainable TDA. We focused on explaining models trained on persistence landscapes. We gave a closed form solution for allocating credit to persistence diagram points based on landscape games (\Cref{theorem:closed_form_landscapes}). Combining this with predictions made by linear models, we can explain how a linear model trained on persistence landscapes is making predictions (\Cref{theorem:closed_form_linear_model}). Our method works for diagrams with different numbers of points, unlike in machine learning where the dimension of features is fixed. Our credit allocation is stable (\Cref{theorem:stability_for_landscape_games}), a desirable property in TDA. For nonlinear models, we gave estimates on the number of coalitions needed to achieve a desired level of approximation (\Cref{theorem:shapley_values_error_bound}). Regardless of the type of the model, the Shapley values allocated to persistence diagram points satisfy the four fairness axioms (\Cref{corollary:fairness_axioms}). All of these results are also experimentally verified (\Cref{section:experiments}). We believe this work gives a new perspective in TDA and we think it opens several new avenues of research in the field. We suggest a list of interesting research directions that would further develop the theory of explainable TDA.
	
		\begin{enumerate}[left=0pt]
			\item Can we calculate exact Shapley values of persistence diagram points for at least some classes of nonlinear models? Recall that for ensemble models on any feature set, the Shapley values can be computed exactly in polynomial time (\Cref{section:tree_ensembles}). This would for example allow us to compute Shapley values for each landscape coordinate $(k,m)$ exactly for ensemble models. However, the pairwise interactions of these coordinates in the nonlinear ensemble model and how to pull back this credit allocation to the points in the persistence diagram is not clear. This raises the question, is it possible to design nonlinear models where the interaction between landscape coordinates is ``well understood".
			\item LandscapeSHAP allows us to allocate credit for a model's prediction to persistence diagram points. However, one can imagine that allocating credit to certain regions in the point clouds themselves can be of great importance as well. We did not spend any effort in this work trying to pull back credit allocation from persistence diagram points onto the point clouds themselves, for example like in \cite{bubenik2026explainabletopologicaldataanalysis}. Is it possible to define a more axiomatic way to allocate credit to regions in the point clouds themselves? Ideally, this should be ``stable", ``fair" and ``computable" (or at least have known bounds on approximations).  
			\item In this work we showed how the persistence landscape can be viewed as a collection of cooperative games, $v_{(k,m)}$, for each landscape coordinate $(k,m)$. We also briefly showed how other featurizations can be viewed as collections of cooperative games. Developing new featurizations of persistence diagrams for machine learning is an active area of research, and our work adds another dimension to this effort: if one needs to explain models trained on featurizations of persistence diagrams, one ought to design those featurizations so that they form a collection of cooperative games, since this is an axiomatically justified route to explainability. Therefore, designing featurizations in TDA should take explainability into account going forward.			
		\end{enumerate}
		
		\section*{Acknowledgement}
		The author used Claude (Anthropic) to assist with drafting and revising portions of this manuscript, checking proofs, and developing the accompanying code. All mathematical claims, proofs, and conclusions were reviewed and verified by the author, who takes full responsibility for the content.  This material is based upon work supported by the National Science Foundation under Grant No. DMS-2424556 while the author was in residence at the Institute for Computational and Experimental Research in Mathematics in Providence, RI, during the Foundations of Computational Geometry and Topology workshop program. The author in particular would also like to thank Peter Bubenik for helpful discussions about explainable TDA during this workshop.

	\printbibliography
	
\end{document}